\documentclass[12pt]{article}

\usepackage{amsmath}
\usepackage{amsfonts}
\usepackage{amssymb}
\usepackage{extarrows}
\usepackage{amscd}
\usepackage{amsthm}
\usepackage{textcomp}
\usepackage{bbm}
\usepackage{color}
\usepackage[linktocpage=true,plainpages=false,pdfpagelabels=false]{hyperref}
\usepackage{euscript}
\usepackage{tikz}

\input xypic

\usepackage[matrix,arrow,curve]{xy}

\DeclareRobustCommand\longtwoheadrightarrow{\relbar\joinrel\twoheadrightarrow}

\usepackage{color}
\def\r{\color{red}}

\newcommand{\quash}[1]{}

\quash{
\sloppy
\oddsidemargin=1pt
\textwidth=450pt \textheight=640pt
\topmargin=-12mm
}


\numberwithin{equation}{section}

\newtheorem{defin}{Definition}[section]
\newtheorem{prop}{Proposition}[section]
\newtheorem{nt}{Remark}[section]
\newtheorem{Th}{Theorem}[section]
\newtheorem{lemma}{Lemma}[section]

\newtheorem{defin-prop}{Definition-proposition}[section]

\newfont{\ssdbl}{msbm8}
\newfont{\sdbl}{msbm9}
\newfont{\dbl}{msbm10 at 12pt}

\newcommand{\oo}{{\cal O}}
\newcommand{\ff}{{\cal F}}

\newcommand{\g}{{\cal G}}

\newcommand{\res}{\mathop {\rm res}}
\newcommand{\Hom}{\mathop {\rm Hom}}

\newcommand{\Aut}{\mathop {\rm Aut}}

\newcommand{\nm}{\mathop {\rm Nm}}
\newcommand{\Spec}{\mathop {\rm Spec}}

\newcommand{\dz}{\mathbb{Z}}

\newcommand{\Z}{\dz}

\newcommand{\lrto}{\longrightarrow}

\def\Z{{\mathbb Z}}
\def\Q{{\mathbb Q}}

\newcommand{\vp}{\varphi}

\newcommand{\AutL}{{   {\mathcal Aut}^{\rm c, alg} ({\mathcal L} )}}

\newcommand{\GG} {{\mathbb G}}
\newcommand{\CC}{\mathop{\rm CC}}

\newcommand{\G}{{\mathcal G}}

\newcommand{\vpl}{{\mathbb V}_+}
\newcommand{\vmi}{{\mathbb V}_-}
\newcommand{\Nil}{{\mathop {\rm Nil}}}

\newcommand{\rk}{{\mathop{\rm rk}}}

\newcommand{\bOmega}{\boldsymbol{\Omega}}

\begin{document}

\author{
Denis V. Osipov
}

\title{Contou-Carr\`ere symbol, Deligne pairing and quintets    \thanks{This work was performed at the Steklov International Mathematical Center and supported by the Ministry of Science and Higher Education of the Russian Federation (agreement no. 075-15-2025-303).}}
\date{}

\maketitle

\begin{abstract}
We prove the reciprocity laws for a family of projective curves over an affine base scheme, where curves can be singular and reducible, and the base can be non-Noetherian. These reciprocity laws generalize the Weil reciprocity law for a projective curve over a field and the reciprocity law for the Contou-Carr\`ere symbol.
We apply the proven reciprocity laws  to describe the actions of certain central extensions of the group ind-scheme related with the formal punctured disc on certain line bundles on the moduli stacks of quintets. A quintet consists of  geometric data including a family of curves and a line bundle on this family.  The line bundles on the moduli stacks are constructed using the Deligne pairings of line bundles from quintets.

\end{abstract}


\section{Introduction}
Recall the Weil reciprocity law (see~\cite[Chapter~III, \S~1.4]{Se}) in the following form.
Let $Y$ be a smooth projective curve over an algebraically closed field $k$. Let $T \subset Y$ be a finite (possibly, empty) set of points, and $U = Y \setminus T$. Let $f$ and $g$ be from $k(Y)^*$ such that the supports of divisors $(f)$ and $(g)$  do not intersect on  $U$. We have in $k^*$
\begin{equation}   \label{Weil_rec}
g( (f) \mid_U) \, f((g) \mid_U )^{-1}= \prod_{y \in T} (f,g)_y  \, \mbox{,}
\end{equation}
where for any divisor $D = \sum_i a_i y_i$ on $U$ (with $a_i \in \dz$, $y_i \in U$) and $d \in k(C)^*$ such that the supports of  divisors $(d) \mid_U$ and $D$ do not intersect
$$d(D) = \prod_{i} d(y_i)^{a_i} \, \mbox{,} \qquad  \mbox{and the tame symbol}   \quad   (f,g)_y = (-1)^{\nu_y(f) \nu_y(g)} \left[  f^{\nu_y(g)}  / g^{\nu_y(f)}   \right](y)  \, \mbox{,} $$
where $\nu_y$ is the valuation at a point $y$.

In Theorem~\ref{Weil-gen} we generalize the reciprocity law~\eqref{Weil_rec} to the case of a family of curves, i.e.  to a morphism  $\pi:   C \to \Spec A$ which is
a flat, proper, finitely presented morphism of schemes such that any fiber  of  $\pi$ is geometrically reduced and equidimensional of dimension~$1$.
In this case  $d(D)$ is replaced with the application of the norm maps on relative effective Cartier divisors on $C$, and the tame symbol is replaced with the Contou-Carr\`ere symbol. The Contou-Carr\`ere symbol $\CC$ (see more in Secton~\ref{Contou}) is the bimultiplicative antysimmetric morphism
$$
\CC \; : \; L \GG_m \times L \GG_m \lrto \GG_m
$$
that is given by an explicit formula (see~\eqref{CC-form-1}). Here $L \GG_m$ is the group ind-scheme which gives the functions on the formal punctured disc, and it is represented by the group functor $A \mapsto L \GG_m (A)= A((t))^*$, see Section~\ref{sec-loop}. When a commutative ring $A$ is a field, the Contou-Carr\`ere symbol $\CC$ evaluated on $A$ coincides with the tame symbol.

The reciprocity law from Theorem~\ref{Weil-gen} leads to the definition of the Deligne pairing $\langle L  , M \rangle$ for $C \to \Spec A$ (see more in
Section~\ref{Deligne}),
where $L$ and $M$ are line bundles on~$C$ and $\langle L, M  \rangle$ is the line bundle on $\Spec A$. This definition is the generalization of the construction when $A=k$, see~\cite[\S~6.1]{D1}. In the latter case $\langle L, M \rangle$ is just the $1$-dimensional $k$-vector space generated by pairs $(l,m)$, where $l$ and $m$ are rational sections of $L$ and $M$ correspondingly such that the supports of divisors $(l)$ and $(m)$ do not intersect, modulo relations
$$
(fl, m) \sim f ((m)) (l,m )    \, \mbox{,}  \qquad  (l, gm) \sim g((l))(l,m)  \, \mbox{,}
$$
where corresponding $f,g \in k(C)^*$, and this definition is correct because of the reciprocity law~\eqref{Weil_rec}  (when $T$ is empty).

Let $\AutL$ be the group ind-scheme which the automorphism group of the formal punctured disc. This group ind-scheme  represents the group functor
$A \mapsto \AutL (A)$, where $\AutL (A)$ is the group  of all $A$-automorphisms of the $A$-algebra $A((t))$ that are homeomorphisms (in natural $t$-adic topology of $A((t))$), see Section~\ref{quint}. The group ind-scheme $\g$ is $L \GG_m \rtimes \AutL$ with respect to the natural action of $\AutL$ on~$L \GG_m$.

We apply also the reciprocity law from Theorem~\ref{Weil-gen} (when it corresponds to $\mid T\mid =1$ in~\eqref{Weil_rec}) to the construction of the actions of central extensions of the group ind-scheme $\g$ by $\GG_m$ on the line bundles $\langle  \mathfrak{F}, \mathfrak{F} \rangle$, $\langle  \mathfrak{F}, \boldsymbol{\Omega}\rangle$
and $\langle  \boldsymbol{\Omega}, \boldsymbol{\Omega} \rangle$ on the moduli stacks $\widetilde{\mathcal M}$ or $\widetilde{\mathcal M}_{\rm sm}$. We explain it now.

The moduli stacks $\widetilde{\mathcal M}$ or $\widetilde{\mathcal M}_{\rm sm}$ (in the Zariski topology) associate with every commutative ring $A$ the groupoids of quintets $\widetilde{\mathcal M}(A)$ or $\widetilde{\mathcal M}_{\rm sm}(A)$. (We define $\widetilde{\mathcal M}(A)$ and $\widetilde{\mathcal M}_{\rm sm}(A)$ for affine schemes $\Spec A$ and then, using descent data and morphisms of descent data in the Zariski topology, we  consider $\widetilde{\mathcal M}$ and $\widetilde{\mathcal M}_{\rm sm}$ as stacks over the category of all schemes.)  The groupoid $\widetilde{\mathcal M}(A)$ consists of quintets over $A$ that are geometric data: a morphism $\pi : C \to \Spec A$ (with the properties as we described above for the reciprocity laws), a section of $\pi$ such that $\pi$ is smooth near the image of the section,
a line bundle on $C$,  a relative formal parameter
at the section, a formal trivialization of the line bundle at the section, see Sections~\ref{quint} and~\ref{more_quint}.  The groupoid $\widetilde{\mathcal M}_{\rm sm}(A)$ consists of quintets from $\widetilde{\mathcal M}(A)$ such that $\pi$ is a smooth morphism.

We note that in complex analytic category an analog of functor $A \mapsto \widetilde{\mathcal M}_{\rm sm}(A)$
modulo isomorphisms  (and when all fibers of $\pi$ from a quintet are  curves of
genus $g$)  is represented by an infinite-dimensional complex manifold, see~\cite{ADKP}. Such
quintets are related with the infinite-dimensional Sato Grassmanian via the Krichever
map.

The group ind-scheme $\g$ naturally acts on $\widetilde{\mathcal M}$ and $\widetilde{\mathcal M}_{\rm sm}$, see Section~\ref{quint}. Informally speaking, an element  from $\g(A)$ reglues  a quintet over $A$ in a formal punctured neighbourhood of the image of the  section (from the quintet) in $C$.

The line bundle $\langle  \mathfrak{F}, \mathfrak{F} \rangle$ is defined on $\widetilde{\mathcal M}$ by association with every quintet over $  A$ that contains a line bundle $\mathcal F$ the line bundle $\langle \mathcal F , \mathcal F \rangle$ on $\Spec A$. Analogously the line bundles
$\langle  \mathfrak{F}, \boldsymbol{\Omega}\rangle$
and $\langle  \boldsymbol{\Omega}, \boldsymbol{\Omega} \rangle$ are defined on $\widetilde{\mathcal M}_{\rm sm}$
by association with every quintet over $ A$ that contains a line bundle $\mathcal F$ the line bundles $\langle \mathcal F , \Omega_{C/A} \rangle$
and $\langle \Omega_{C/A} , \Omega_{C/A} \rangle$ on $\Spec A$.

 There are distinct $2$-cocycles
  $
\langle \Lambda , \Lambda   \rangle  $, $ \langle \Lambda , \Omega   \rangle$,  $\langle \Omega , \Omega   \rangle $
  on the group ind-scheme $\g$ with coefficients in the group scheme $\GG_m$ (where $\g$ acts trivially on $\GG_m$) that are given by explicit formulas with the help of the Contou-Carr\`ere symbol $\CC$, see Secton~\ref{last} or~\cite[\S~7.2]{O2}.
 These $2$-cocycles define three corresponding central extensions of $\g$ by $\GG_m$ that are the group ind-schemes. In Theorem~\ref{last-th} we construct natural actions (by explicit formulas) of  these three group ind-schemes on the line bundles $\langle  \mathfrak{F}, \mathfrak{F} \rangle$,
 $\langle \mathfrak F , \boldsymbol{\Omega} \rangle$
and $\langle \boldsymbol{\Omega} , \boldsymbol{\Omega}\rangle$
 on the corresponding moduli stacks of quintets.

Theorem~\ref{last-th} is based on Theorem~\ref{Th-CC}, where we make the local calculations of the corresponding $2$-cocycle related with the Contou-Carr\`ere symbol $\CC$ under the action of $L \GG_m(A) \times
L \GG_m(A)$ on the Deligne pairing $\langle L, M\rangle$. For the proof of Theorem~\ref{Th-CC} we use Theorem~\ref{Weil-gen}. Besides, in Theorems~\ref{stack-th}, \ref{stack-th-2}, and item~\ref{iitt-3} of Theorem~\ref{last-th}  we describe also actions of another central extensions of group ind-schemes related with $\g$ on the line bundles constructed similar  to the line bundles described above.

Finally, we note that distinct $2$-cocycles
  $
\langle \Lambda , \Lambda   \rangle  $, $ \langle \Lambda , \Omega   \rangle$,  $\langle \Omega , \Omega   \rangle $
and quintets were used in~\cite{O2}, where the local Deligne-Riemann-Roch isomorphism was constructed for line bundles in relative dimension~$1$ (see also the original work~\cite{D1} on the Deligne-Riemann-Roch isomorphism, and Section~1.1 in \cite{O2}). Therefore in connection with~\cite{O2} and the Deligne-Riemann-Roch isomorphism, constructing of  actions of the group ind-schemes given as central extensions by these three $2$-cocycles on the line bundles $\langle  \mathfrak{F}, \mathfrak{F} \rangle$,
 $\langle \mathfrak F , \boldsymbol{\Omega} \rangle$
and $\langle \boldsymbol{\Omega}, \boldsymbol{\Omega} \rangle$ was one of the main motivations for this paper.

\medskip

 {\bf Organization of the paper.}
In Section~\ref{sec-loop} we recall the properties of the loop functor $L\GG_m$ of the group functor $\GG_m$.
In Section~\ref{Contou} we recall the Contou-Carr\`ere symbol $\CC$.
In Section~\ref{Sec-Tate} we give some properties for Tate $A$-modules. In Section~\ref{Sec-comm} we give some properties for homomorphisms to the Picard groupoid ${\mathcal P}ic^{\dz}(A)$. In Section~\ref{rec_l} we prove the reciprocity laws for families of curves.
In Section~\ref{Deligne} we define the Deligne pairing for the family of curves using the proven reciprocity laws.
In Section~\ref{quint} we recall (and prove the properties for) the quintets and the action of the group ind-scheme $\g$ on the moduli stack  of quintets.
In Section~\ref{more_quint} we introduce the moduli stack $\widetilde{\mathcal M}$. In Section~\ref{stack} we construct an action of a central extension of $L \GG_m$ on a line bundle on $\widetilde{\mathcal M}$ defined by Deligne pairings. In Section~\ref{del-aut} we consider an action of $\AutL(A)$ on the Deligne pairings related with quintets. In Section~\ref{last}  we construct actions of central extensions of~$\g$ on line bundles on various moduli stacks related with quintets.

\medskip

 {\bf Notation and terminology.}
We fix some notation and terminology in this paper.   By a commutative ring we mean a commutative associative unital ring.  By  $A$ we always denote a commutative ring.
For any commutative ring $B$, by $B^*$ we denote its group of invertible elements.
By a group functor (or commutative group functor) we mean a covariant functor from the category of commutative rings to the category of groups (or Abelian groups).
We usually evaluate such a functor on $A$.

\section{Loop functor and the  Contou-Carr\`ere symbol}
\label{sec-two}

We recall some definitions and statements about the loop functor of $\GG_m$ and the Contou-Carr\`ere symbol.

\subsection{Loop functor of $\GG_m$}   \label{sec-loop}

 Let $A((t))= A[[t]][t^{-1}]$ be the ring of Laurent series over $A$. On the ring $A((t))$ there is the natural topology with  the base of neighbourhoods of zero consisting of $A$-submodules
$U_n = t^n A[[t]]$, $n \in \dz$. This topology makes the ring $A((t))$ into a topological ring.

 Recall that  $\GG_m$ is a commutative group functor such that $\GG_m(A)= A^*$.
By $L \GG_m$ we denote the group loop functor of $\GG_m$. This means that
$$L \GG_m(A) = \GG_m(A((t)))= A((t))^* \mbox{.} $$

There is a canonical  isomorphism of commutative group functors (see~\cite[Lemma~1.3]{CC1}, \cite[Lemma~0.8]{CC2})
\begin{equation}  \label{md}
L \GG_m   \, \simeq \,  \underline{\dz} \times \GG_m \times \vpl  \times \vmi
 \mbox{,}
\end{equation}
and we describe now  the group functors in the right hand side and the corresponding embeddings to $L \GG_M$.

The group  $\underline{\dz}(A)$ is the group of locally constant $\dz$-valued functions on $\Spec A$. Any $\underline{n} \in \underline{\dz}(A)$ defines the decomposition $A = A_1 \times \ldots \times A_l$ such that
the function $\underline{n}$ restricted to any $\Spec A_i$ is the constant function with value $n_i \in \dz$.
Then $\underline{n}  \mapsto t^{\underline{n}} = t^{n_1} \times \ldots \times t^{n_l}$ gives  embedding $\underline{\dz} \to L \GG_m$
of the corresponding group functors in~\eqref{md}.

We will use also the following notation.  For any $\underline{n} \in \underline{\dz}(A)$ as above and any $a \in A^*$
we define  the element $a^{\underline{n}} = a^{n_1} \times \ldots \times a^{n_k}$ from $ A^*$.

The group embedding  $\GG_m(A)  \hookrightarrow L \GG_m(A)$ is given by mapping of an invertible element from $A$ to the series consisting of only a constant term.

The subgroups $\vpl(A)$ and $\vmi(A)$ of the group $L \GG_m(A) $  are defined as
\begin{gather*}
\vpl(A) = \left\{ \left. 1 + \sum_{l > 0} a_l t^l  \quad \right|  \quad   a_l \in A     \quad  \mbox{for any} \quad l > 0         \right\}   \\
\vmi(A) = \left\{   1 + \sum_{l < 0} a_l t^l \quad \left|  \quad  \sum_{l < 0} a_l t^l  \in A((t)) \, \mbox{,} \; a_l \in \Nil(A)    \right.        \quad \mbox{for any} \quad l< 0           \right\} \, \mbox{,}
\end{gather*}
where $\Nil(A)$ is the nil-radical of $A$, i.e. the set of all nilpotent elements of $A$.

\quash{We denote
\begin{equation}  \label{declg}
(L \GG_m)^0 =   \GG_m \times \vpl  \times \vmi  \, \mbox{.}
\end{equation}
Thus we obtain decomposition of group functors
\begin{equation}  \label{dec}
\underline{\dz} \times (L \GG_m)^0 \, \simeq \,  L \GG_m  \, \mbox{.}
\end{equation}}
By $\nu : L \GG_m  \to \underline{\dz}$ we denote the  morphism of group functors given by the corresponding  projection in decomposition~\eqref{md}.

\subsection{Contou-Carr\`{e}re symbol} \label{Contou}

We recall the definition of the Contou-Carr\`{e}re symbol (see~\cite{CC1}, \cite[\S~2.9]{D2}, \cite[\S~2]{OZ}):
$$
\CC \,  :  \, L\GG_m  \times L \GG_m \lrto \GG_m  \, \mbox{.}
$$

We define now a  free $A((t))$-module of rang $1$:
\begin{equation}  \label{form-ker}
\widetilde{\Omega}_{A((t))}  = \Omega_{A((t))}  / N \, \mbox{,}
\end{equation}
where $\Omega_{A((t))}$ is the $A((t))$-module of absolute K\"ahler differentials, and the  $A((t))$-submodule $N$ is generated by all elements
 $df -  f' dt$, where $f \in A((t))$ and $f'= \frac{\partial{f}}{\partial{t}}$. It is clear  that $N$ contains elements $da$, where $a \in A$, and  that $dt$ is a basis of the $A((t))$-module~$\widetilde{\Omega}_{A((t))}$.

We define the  residue
$$
\res \; : \;  \Omega_{A((t))}  \lrto \widetilde{\Omega}_{A((t))}   \lrto A
$$
 where the first map is the natural map, and the second map is the map $\sum_{i \in \dz} a_it^i dt   \mapsto a_{-1}$.

The  Contou-Carr\`{e}re symbol $\CC$ is a bimultiplicative antisymmetric morphism which  has the following additional properties. Let  ${f,g  \in A((t))^*}$.

\begin{enumerate}
\item  If $a \in A^*$, then ${\CC}(a, g ) = a^{\nu(g)}$.
\item   $\CC(t,t)=-1$.
\item  If $\Q \subset A$ and $ f \in \vpl(A) \times \vmi(A) $, then
\begin{equation}  \label{CC-exp-log-1}
\CC(f,g) = \exp \res \left(\log f \cdot \frac{dg}{g} \right)  \, \mbox{,}
\end{equation}
where  $\exp (x)$ and $\log(1+y)$ are the usual formal series, the series $\log$ in above formula  converges in the topology of $A(((t))$,
and application of series $\exp$ in above formula makes sense, because  $\res \left(\log f \cdot \frac{dg}{g} \right)  \in {\rm Nil }(A)$.
\end{enumerate}

After restriction to commutative $\Q$-algebras,  the Contou-Carr\`{e}re symbol $${\CC}_{\Q} \, : \,  {L\GG_m}_{\Q}  \times {L \GG_m}_{\Q}  \lrto {\GG_m}_{\Q} $$
 is uniquely defined by the above properties.

There is the following unique extension of ${\CC}_{\Q}$ to $\CC$.

For any $f, g \in A((t))^*$ there are  unique decompositions:
$$
f = \prod_{i < 0} (1 - a_it^i) \cdot a_0 \cdot t^{\nu(f)}  \cdot \prod_{i > 0} (1 - a_it^i) \,
\mbox{,}  \qquad
g = \prod_{j < 0} (1 - b_jt^j) \cdot b_0 \cdot t^{\nu(g)}  \cdot \prod_{j > 0} (1 - b_jt^j)   \, \mbox{,}
$$
where $a_0, b_0 \in A^*$, $a_i, b_j \in {\rm Nil}(A) $ when $i, j <0$, and the products over negative $i$ and over negative $j$ are finite products.
Now the Contou-Carr\`{e}re symbol is defined by  the following formula:
\begin{equation}  \label{CC-form-1}
\CC(f,g)= (-1)^{\nu(f)  \nu(g)}  \frac{a_0^{\nu(g)} \prod_{i > 0} \prod_{j>0 }
 \left(1 - a_i^{j /(i,j)} b_{-j}^{i/(i,j)} \right)^{(i,j)}  }{b_0^{\nu(f)}
 \prod_{i > 0} \prod_{j>0 }
 \left(1 - a_{-i}^{j /(i,j)} b_{j}^{i/(i,j)} \right)^{(i,j)}
  }  \, \mbox{,}
\end{equation}
where the products in  the numerator and denominator actually consist of a finite number of factors, therefore the formula makes sense. (Formula~\eqref{CC-form-1} can be obtained, using above properties and by application of formula~\eqref{CC-exp-log-1} to elements $1 - a_it^i$ and $1 - b_jt^j$.)

\quash{ We recall, see~\eqref{md}, that we have embeddings $\GG_m \times \vpl  \hookrightarrow L\GG_m$
and $\GG_m \times \vmi  \hookrightarrow L\GG_m$. From above formulas for   the  Contou-Carr\`{e}re symbol $\CC$ it follows that
\begin{equation}  \label{CC-ident}
\CC \mid_{(\GG_m \times \vpl) \times (\GG_m \times \vpl)} =1  \qquad \mbox{and} \qquad
\CC \mid_{(\GG_m \times \vmi) \times (\GG_m \times \vmi)} =1
   \mbox{.}
\end{equation}}

\begin{nt}  \em
Another explicit formula for the  Contou-Carr\`{e}re symbol will follow from Proposition~\ref{explicit} below (cf.~also with~\cite[\S~3.3, Remark (i)]{BBE}).
\end{nt}

\begin{nt}  \em
There are other approaches to define the Contou-Carr\`{e}re symbol (even  the higher-dimensional  Contou-Carr\`{e}re symbol) by explicit formulas, see~\cite{GO}.
\end{nt}

\section{Relative reciprocity laws}
We will give the reciprocity laws for the family of projective algebraic curves. These reciprocity laws will generalize the reciprocity law for the smooth family of projective algebraic curves proved in~\cite[\S~3.4]{BBE}.
In particular case the reciprocity law will use the Contou-Carr\`ere symbol.

\subsection{Tate $A$-modules}  \label{Sec-Tate}

Let $\pi : C \to \Spec A$ be a finitely presented  morphism of schemes such that the fibres of $\pi$
  are one-dimensional schemes, $D \subset C$ be a relative effective Cartier divisor (see~\cite[Section~056P]{Stacks}) such that $\pi |_D$ is proper. For example, $\pi |_D$ is proper when $\pi$ is proper.

\begin{prop}  \label{exact-seq}
For any integers $n_1 \le n_2 \le n_3$ there is a natural exact sequence of finite projective (or, in other words, finitely generated projective) $A$-modules:
\begin{multline}
0 \lrto H^0 \left(C, \oo_C (n_2D) / \oo_C(n_1D) \right) \lrto H^0 \left(C, \oo_C (n_3D) / \oo_C(n_1D) \right)  \lrto  \\ \lrto H^0 \left(C, \oo_C (n_3D) / \oo_C(n_2)D \right)  \lrto 0  \, \mbox{.}  \label{e-s}
\end{multline}
\end{prop}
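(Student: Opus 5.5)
The plan is to work at the level of sheaves first, then take global sections, and to reduce everything to the case of consecutive integers. Since $D$ is a relative effective Cartier divisor, $\oo_C(-D)$ is an invertible ideal sheaf and $\oo_C(nD)$ is invertible for every $n \in \dz$. For $n_1 \le n_2 \le n_3$ the inclusions $\oo_C(n_1D) \subset \oo_C(n_2D) \subset \oo_C(n_3D)$ give a short exact sequence of $\oo_C$-modules
$$0 \lrto \oo_C(n_2D)/\oo_C(n_1D) \lrto \oo_C(n_3D)/\oo_C(n_1D) \lrto \oo_C(n_3D)/\oo_C(n_2D) \lrto 0 .$$
All three sheaves are supported on the closed subscheme $n_3D - n_1 D$, i.e. the vanishing locus of the ideal $\oo_C((n_1-n_3)D)$; call it $Z$. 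First I would check that $Z$ is proper over $\Spec A$. Its underlying set equals that of $D$ (when $n_3>n_1$), and $\pi|_D$ is proper, so $\pi|_Z$ is proper: it is separated and of finite type because $Z$ is a finite-order thickening of $D$, and it is universally closed because this is a topological property. Moreover $Z$ is a relative effective Cartier divisor, since sums of relative effective Cartier divisors are again such.

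Next I would show that $\pi|_Z$ is \emph{finite}. Its fibres are the fibres of $D$, and $D_s$ is an effective Cartier divisor in the one-dimensional scheme $C_s$, hence zero-dimensional; a proper morphism with finite fibres is finite. Flatness of $Z$ over $A$ comes from the definition of a relative effective Cartier divisor, and finite presentation comes from the fact that $\pi$ is finitely presented and the ideal is locally principal. So $\pi_*\oo_Z$ is a finite, flat, finitely presented $A$-module, hence finite projective. Each sheaf in the sequence is obtained from $\oo_Z$ by tensoring with invertible sheaves on the relevant thickenings, via the filtration with successive quotients $\oo_C(kD)/\oo_C((k-1)D) \simeq \oo_D \otimes \oo_C(kD)$, which is an invertible $\oo_D$-module. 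Hence each sheaf is a finite-length iterated extension of pushforwards of invertible sheaves on $D$, and each of those is finite locally free over $A$ by the same argument applied to $D$.

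Since $Z \to \Spec A$ is finite, hence affine, $H^1$ of quasi-coherent sheaves on $Z$ vanishes. Taking $H^0$ of the short exact sequence therefore gives a short exact sequence of $A$-modules. For projectivity, I would induct using exactly these sequences with consecutive steps: finite projective modules are closed under extensions, so $H^0(C,\oo_C(n_3D)/\oo_C(n_1D))$ is finite projective once each graded piece $H^0(D, \oo_D\otimes\oo_C(kD))$ is, which follows from finiteness and flatness of $D$ over $A$. Naturality is clear from the construction.

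I expect the main obstacle to be the finiteness and finite-presentation step for $D$ over the possibly non-Noetherian base. To handle it I would use that a proper, locally quasi-finite morphism is finite (Stacks, Zariski main theorem form), together with the fact that a finite, flat, finitely presented module is projective.
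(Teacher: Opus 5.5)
Your proof is correct and follows the paper's argument. Like the paper, you show that $\pi|_D$ is finite because it is proper with finite fibres, deduce the vanishing of $H^1$ from affineness, and obtain projectivity by induction on the graded pieces $\mathcal{O}_C(kD)/\mathcal{O}_C((k-1)D)$, which are invertible $\mathcal{O}_D$-modules over the finite flat finitely presented $D$. The only cosmetic difference is how you kill $H^1$: you note that the thickening $(n_3-n_1)D$ is itself finite, hence affine, over $A$, whereas the paper uses only that $D$ is affine and inducts on $n_2-n_1$ (so your separate checks that $(n_3-n_1)D$ is proper, flat and finitely presented are not actually needed).
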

\begin{proof}
Since $\pi |_D$ is proper with finite fibers, by \cite[Lemma 02LS]{Stacks}, $\pi |_D$ is finite, and, in particularly, affine.
Since the scheme $D$ is affine, by induction on $(n_2 -n_1)$ and  \cite[Lemma 01XB]{Stacks} we have that $H^1 \left(C, \oo_C (n_2D) / \oo_C(n_1D) \right)  =0$. Hence we derive exact sequence~\eqref{e-s}.
 Now since $\pi |_D$ is finite, flat and  of finite presentation, by \cite[Section 02K9, Lemma 02KB]{Stacks},  $H^0 (C, \oo_C / \oo_C (-D))$ is a finite   projective $A$-module.
Hence for any integer $n$ the $A$-module  {$ H^0(C, \oo_C (nD) / \oo_C((n-1)D)$} is finite projective. Therefore, by induction on $(n_3 -n_1)$,  we have from exact sequence~\eqref{e-s} that
the $A$-module $ H^0(C, \oo_C (n_3D) / \oo_C((n_1)D)$  is finite projective.
\end{proof}

We consider the $A$-algebra
\begin{equation}  \label{compl-o}
\hat{\oo}_{C,D} = \varprojlim_{n > 0}  H^0(C, \oo_C / \oo_C( - nD))   \, \mbox{.}
\end{equation}
 It is clear that $\hat{\oo}_{C,D}$ has a natural topology of projective limit that makes  $\hat{\oo}_{C,D}$ into a topological $A$-algebra with the discrete topology on $A$.   We consider also the topological $A$-algebra
$$
{\mathcal K}_{C, D} =   \varinjlim_{m  > 0}  \varprojlim_{n > 0}  H^0(C, \oo_C (mD) / \oo_C( - nD))  \,  \mbox{.}
$$
with the topology of inductive and projective limits.

\begin{nt}  \em
Recall from~\cite[\S~2.11]{BBE} and  \cite[\S~3.2]{Dr} that a projective $A$-module considered as a discrete topological $A$-module is called a discrete Tate $A$-module. The dual ${M^* = \Hom_A(M,A)}$ to a discrete Tate $A$-module $M$ is called a compact Tate $A$-module (with the topology, where the base of neighbourhoods of $0$ is formed by
annihilators of ﬁnite subsets in
$M$). An elementary (or special) Tate $A$-module is a topological $A$-module which is a direct sum of a discrete Tate  $A$-module and a compact Tate $A$-module.

\end{nt}

From Proposition~\ref{exact-seq} it follows that $\hat{\oo}_{C,D}$  is a compact Tate module and ${\mathcal K}_{C, D} / \hat{\oo}_{C,D}$ is a discrete projective  $A$-module. Therefore ${\mathcal K}_{C, D}$ is an elementary Tate $A$-module.

By definition, an $A$-submodule $N  \subset {\mathcal K}_{C, D}$ is a $c$-lattice if $N$ is an open compact Tate $A$-module (with the induced topology) such that ${\mathcal K}_{C, D}  /N$ is a projective $A$-module.
Thus, $\hat{\oo}_{C,D}$ is a $c$-lattice. Clearly, $c$-lattices form a base of neighbourhoods of $0$ in  ${\mathcal K}_{C, D}$. It is easy to see that if $N_1 \subset N_2$ are $c$-lattices in ${\mathcal K}_{C, D}$,
then $N_2/N_1$ is a finite projective $A$-module.

\subsection{Homomorphism to the Picard groupoid ${\mathcal P}ic^{\dz}(A)$}  \label{Sec-comm}

Recall (see, e.g., more in \cite[\S~2A-2B]{OZ0}, \cite[\S~5.2(i)]{OZ})
that a Picard groupoid is a symmetric monoidal
group-like groupoid. Speaking more informally, a Picard groupoid is  a symmetric monoidal category such
that every object in this category is invertible, as well as every morphism.

We are interested in  the Picard groupoid ${\mathcal P}ic^{\dz}(A)$.
An object in ${\mathcal P}ic^{\dz} (A)$ is a pair $(L,n)$ where $L$
is a projective $A$-module of rank $1$, and $n  \in \underline{\dz}$. The
morphism set $\Hom_{{\mathcal P}ic^{\dz} (A)}\left((L_1,n_1),(L_2,n_2)\right)$ is
empty unless $n_1=n_2$ and $L_1$ is isomorphic to $L_2$, and in this case, it is just the set of isomorphisms between $L_1$ and $L_2$.
The tensor product (i.e., the monoidal structure) ${\mathcal P}ic^{\dz}(A) \times  {\mathcal P}ic^{\dz}(A) \to {\mathcal P}ic^{\dz}(A)$ is given as
\[(L_1,n_1)\otimes(L_2,n_2)\longmapsto (L_1\otimes L_2, n_1+n_2) \, \mbox{.}\]
There is a natural associativity constraint that makes ${\mathcal P}ic^{\dz}(A)$ a monoidal groupoid.
The commutativity constraint in the category ${\mathcal P}ic^{\dz}(A)$
is the following:
\[c_{L_1,L_2}
\, : \, (L_1\otimes L_2,n_1+n_2)\simeq(L_2 \otimes L_1,n_2+n_1)  \, \mbox{,} \qquad
c_{L_1,L_2}(v\otimes w)=(-1)^{n_1n_2}w\otimes v  \, \mbox{.}\]

Let $N_1$ and $N_2$ be any two $c$-lattices in ${\mathcal K}_{C, D}$.
The
 projective $A$-module of rank $1$
 $$
 \Hom\nolimits_A \left(\bigwedge^{\rm max}(N_1/ N),  \, \bigwedge^{\rm max} N_2/ N ) \right)
$$
does not depend on the choice of a $c$-lattice $N$ with $N \subset N_1$, $N \subset N_2$ up to a unique isomorphism. We identify over all such $c$-lattices $N$ all these projective $A$-modules via the following
definition of the projective $A$-module of rank $1$:
$$
\bigwedge^{\rm max} \left( N_1 \mid N_2 \right) =   \varinjlim_N \Hom\nolimits_A \left(\bigwedge^{\rm max}( N_1 / N), \, \bigwedge^{\rm max}(N_2/ N) \right)  \, \mbox{.}
$$

We define now $\det(N_1 \mid N_2)   $ as an object of ${\mathcal P}ic^{\dz}(A)$ in the following way:
$$
\det(N_1 \mid N_2)  = \left( \bigwedge^{\rm max} \left( N_1 \mid N_2 \right)
   , \,  \rk(N_2/N) - \rk(N_1/N)       \right)  \, \mbox{.}
$$

For any $c$-lattices  $N_1, N_2, N_3$  in ${\mathcal K}_{C, D}$ we have a canonical isomorphism
\begin{equation}
\label{iso-tens}
\det( N_1 \mid N_2)  \otimes \det(N_2 \mid N_3) \, \xrightarrow{\sim}  \, \det( N_1 \mid N_3)
\end{equation}
that satisfies the associativity diagram for any four $c$-lattices  in ${\mathcal K}_{C, D}$.

Multiplication on the $A$-algebra $ {\mathcal K}_{C, D}$  by any element $g$ from the group $ {\mathcal K}_{C, D}^*$ of invertible elements of $ {\mathcal K}_{C, D}$
is a homeomorphism.
Hence for any $c$-lattice $N  \subset {\mathcal K}_{C, D}$, the $A$-submodule $g N \subset {\mathcal K}_{C, D}$ is a $c$-lattice. Besides there is a canonical
  isomorphism
\begin{equation} \label{mult-isom}
\det(N_1 \mid N_2) \,  \xrightarrow{\sim}  \, \det(g N_1 \mid  g N_2) \mbox{.}
\end{equation}

\medskip

Recall (see more in~\cite[\S~2D]{OZ0})
that for a group $G$ and a Picard groupoid $\mathcal P$ one defines  the Picard groupoid of homomorphisms from $G$ to $\mathcal P$. The objects of this Picard groupoid are monoidal functors from
$G$ to  $\mathcal P$, where
$G$
is regarded as a
discrete monoidal category (the monoidal groupoid whose objects are elements of
$G$
and whose only morphisms are the unit morphisms of objects), and morphisms
between these monoidal functors are monoidal natural transformations.

Let $F$ be a homomorphism from $G$ to $\mathcal P$,  $Z_2 \subset G \times G $ be the subset of commuting elements, and $e$ is a unit object of $\mathcal P$.  One defines (see more in~\cite[\S~2D]{OZ0} )
the
anti-symmetric bimultiplicative map
$\mathop{\rm Comm}(F):Z_2\to\pi_1(\mathcal P)=\Aut_{\mathcal P}(e)$
in the following way.  For $g_1,g_2\in Z_2$  we consider the chain of isomorphisms
\begin{equation}  \label{comm}
F(g_1g_2)\simeq F(g_1)+F(g_2)\simeq F(g_2)+F(g_1)\simeq F(g_2g_1)=F(g_1g_2)  \, \mbox{,}
\end{equation}
where  $+$ denotes the monoidal structure in $\mathcal P$, the first and the third isomorphisms come from the
constraints for the homomorphism $F$ (i.e. that $F$ is a monoidal functor), and the second isomorphism comes from the commutativity
constraints of the Picard groupoid~$\mathcal P$. Using chain of isomorphisms~\eqref{comm} we  obtain an
element
\[\mathop{ \rm Comm}(F)(g_1,g_2)\in \Aut\nolimits_{\mathcal P}(F(g_1g_2))\simeq\pi_1({\mathcal P}).\]

\medskip

Now we construct the homomorphism ${\mathcal D}et_{C, D}$ from the group $ {\mathcal K}_{C, D}^*$ to the Picard groupoid ${\mathcal P}ic^{\dz}(A)$.
For any $g \in  {\mathcal K}_{C, D}^*$ we define an object from ${\mathcal P}ic^{\dz}(A)$:
\begin{equation}   \label{hom-det}
{\mathcal D}et_{C, D}(g) = \det(\hat{\oo}_{C,D}  \mid g \hat{\oo}_{C,D} ) \, \mbox{.}
\end{equation}
The structure of the monoidal functor for ${\mathcal D}et_{C, D}$ (i.e. that it is a homomorphism) comes from the following chain of isomorphisms constructed with the help of~\eqref{iso-tens}-\eqref{mult-isom}:
\begin{multline}
{\mathcal D}et_{C, D}(g_1 g_2) = \det(\hat{\oo}_{C,D}  \mid g_1 g_2 \hat{\oo}_{C,D} )  \simeq  \det(\hat{\oo}_{C,D}  \mid g_1  \hat{\oo}_{C,D} )  \otimes  \det( g_1 \hat{\oo}_{C,D}  \mid g_1 g_2 \hat{\oo}_{C,D} )  \simeq  \\
\simeq \det(\hat{\oo}_{C,D}  \mid g_1  \hat{\oo}_{C,D} )  \otimes  \det( \hat{\oo}_{C,D}  \mid  g_2 \hat{\oo}_{C,D} )  = {\mathcal D}et_{C, D}(g_1) \otimes {\mathcal D}et_{C, D}(g_2)   \label{det-gr}
\end{multline}
that satisfies the associativity diagram for any $g_1, g_2, g_3$.

Besides we have that ${ \rm Comm}({\mathcal D}et_{C, E})(g_1,g_2)  \in A^*$  for any $g_1, g_2$ from $ {\mathcal K}_{C, D}^*$.

\begin{prop}  \label{explicit} Let $f,g \in {\mathcal K}_{C, D}^*$ be any elements.
\begin{enumerate}
\item \label{it-1} If $f \in \hat{\oo}_{C,D}^*$ and $g \in {\mathcal K}_{C, D}^* \cap \hat{\oo}_{C,D}$, then $${ \rm Comm}({\mathcal D}et_{C, D})(f,g) = \det \left( f \mid_{\hat{\oo}_{C,D} / g \hat{\oo}_{C,D}}  \right) \, \mbox{.}$$
    (We recall that the determinant of the automorphism of the zero module equals $1$.)
\item  \label{it-2} If the $A$-algebra ${\mathcal K}_{C, E}$ is topologically isomorphic to the $A$-algebra $A((t))$, then
$${ \rm Comm}({\mathcal D}et_{C, D})(f,g) = \CC(f,g) \, \mbox{.}
$$
\end{enumerate}
\end{prop}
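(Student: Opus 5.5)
For item~\ref{it-1}, I will unwind the chain of isomorphisms~\eqref{comm} for $F={\mathcal D}et_{C,D}$, using the lattice $N=g\hat{\oo}_{C,D}$, which is contained in $\hat{\oo}_{C,D}$. Since $f\in\hat{\oo}_{C,D}^*$, we have $f\hat{\oo}_{C,D}=\hat{\oo}_{C,D}$. Hence ${\mathcal D}et_{C,D}(f)=\det(\hat{\oo}_{C,D}\mid\hat{\oo}_{C,D})$ is canonically the unit object $(A,0)$. For the same reason ${\mathcal D}et_{C,D}(fg)$ and ${\mathcal D}et_{C,D}(g)$ both equal $\det(\hat{\oo}_{C,D}\mid g\hat{\oo}_{C,D})$, and this is identified with the dual of $\bigwedge^{\rm max}(\hat{\oo}_{C,D}/g\hat{\oo}_{C,D})$ with degree $-\rk$.

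I will then trace the two routes through~\eqref{det-gr}.
\begin{itemize}
\item The route ${\mathcal D}et(fg)\simeq{\mathcal D}et(f)\otimes{\mathcal D}et(g)$ first uses $\det(\hat{\oo}\mid f\hat{\oo})=\det(\hat{\oo}\mid\hat{\oo})$. It then applies the transport~\eqref{mult-isom} by $f$ from $\det(\hat{\oo}\mid g\hat{\oo})$ to $\det(f\hat{\oo}\mid fg\hat{\oo})$. This transport is the map induced by multiplication by $f$ on $\hat{\oo}/g\hat{\oo}$, so on the top exterior power it is $\det(f\mid_{\hat{\oo}/g\hat{\oo}})^{\pm1}$.
\item The route ${\mathcal D}et(gf)\simeq{\mathcal D}et(g)\otimes{\mathcal D}et(f)$ transports $\det(\hat{\oo}\mid f\hat{\oo})$ by $g$ to $\det(g\hat{\oo}\mid gf\hat{\oo})=\det(g\hat{\oo}\mid g\hat{\oo})$, which is the trivial identification.
\item The symmetry constraint contributes the sign $(-1)^{0\cdot n}=1$, because ${\mathcal D}et(f)$ has degree $0$.
\end{itemize}
So the commutator is $\det(f\mid_{\hat{\oo}_{C,D}/g\hat{\oo}_{C,D}})$. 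The only care needed is fixing the sign of the exponent from the dualization convention in $\bigwedge^{\rm max}(N_1\mid N_2)$; I would check it against $f=a\in A^*$ and $g=t^n$.

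For item~\ref{it-2}, we may identify ${\mathcal K}_{C,D}$ with $A((t))$ and $\hat{\oo}_{C,D}$ with $A[[t]]$. The map $(f,g)\mapsto{\rm Comm}({\mathcal D}et)(f,g)$ is a bimultiplicative, antisymmetric morphism of functors $L\GG_m\times L\GG_m\to\GG_m$, since the construction is compatible with base change in $A$. The decomposition~\eqref{md} then reduces the claim to comparing the two symbols on generators, namely elements of $\underline{\dz}$, $\GG_m$, $\vpl$ and $\vmi$, and in fact on the elementary factors $t$, $a\in A^*$, and $1-at^{i}$ with $i\neq0$ (nilpotent $a$ if $i<0$). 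A finite-type or nilpotence argument handles the infinite products: everything can be computed modulo a high power of $t$, since only finitely many factors matter. This matches the structure of formula~\eqref{CC-form-1}.

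The cases are as follows.
\begin{itemize}
\item For $(a,g)$ with $g\in A[[t]]\cap A((t))^*$, item~\ref{it-1} gives $\det(a\mid A[[t]]/gA[[t]])=a^{\nu(g)}$, since the quotient is projective of rank $\nu(g)$ (by Weierstrass preparation for the unit-leading part).
\item For $(f,g)$ both in $\GG_m\times\vpl$, item~\ref{it-1} with $g$ invertible in $A[[t]]$ gives $1$, which agrees with $\CC$.
\item $\CC(t,t)=-1$ comes from the commutativity constraint sign $(-1)^{1\cdot1}$ together with the trivial transport.
\item For $(1-at^{i},\,1-bt^{-j})$ with $i,j>0$ and $b$ nilpotent, note that $h=1-bt^{-j}$ does not preserve $A[[t]]$. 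Write $h^{-1}$ times a suitable power $t^{m}$ so that $t^{m}h$ lies in $A[[t]]$ and has $\nu=m$. Then apply item~\ref{it-1} together with bimultiplicativity: compute $\det\bigl(1-at^{i}\mid A[[t]]/t^{m}hA[[t]]\bigr)$ and divide by $\det\bigl(1-at^{i}\mid A[[t]]/t^{m}A[[t]]\bigr)=1$.
\end{itemize}
The resulting characteristic-polynomial-type determinant on the free module $A[[t]]/(t^{m}-bt^{m-j})$, in the basis $1,t,\dots,t^{m-1}$, must be shown to equal $\prod(1-a^{j/(i,j)}b^{i/(i,j)})^{(i,j)}$.

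The main obstacle is this last computation. I would first reduce to the universal case $A=\dz[a,b]/(b^{N})$, which is torsion-free, so that it embeds into its $\Q$-tensor. There both sides are multiplicative morphisms agreeing with $\exp\res(\log f\,dg/g)$, by the standard trace identity $\log\det=\mathrm{tr}\log$ applied to multiplication by $f$ on $A[[t]]/hA[[t]]$. This reduces everything to a residue computation, which is exactly property~\eqref{CC-exp-log-1}. Uniqueness of $\CC_{\Q}$ then finishes the argument over $\Q$-algebras, and torsion-freeness of the universal rings gives the result over all $A$.
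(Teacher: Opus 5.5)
For item~1 your argument is the paper's: unwind \eqref{comm}--\eqref{det-gr} via \eqref{mult-isom}, using ${\mathcal D}et_{C,D}(f)\simeq(A,0)$. For item~2 the paper computes nothing and simply quotes \cite[\S~3.2--\S~3.3]{BBE}. Your self-contained replacement has a genuine gap.

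\textbf{The missing cases.} Bimultiplicativity and \eqref{md} oblige you to know ${\rm Comm}({\mathcal D}et_{C,D})$ on \emph{all} pairs of elementary factors. Your case list never treats $(t,\,1-bt^{-j})$ or $(1-at^{-i},\,1-bt^{-j})$, with $i,j>0$ and $a,b$ nilpotent. These are exactly the values your tools cannot reach. Item~1 needs one argument in $\hat{\oo}_{C,D}^*=A[[t]]^*$, and neither $t$ nor any element of $\vmi(A)$ lies there. Nor do you have another $c$-lattice stabilized by $1-bt^{-j}$. The obvious candidate $A[[t]]+bt^{-1}A[[t]]$ over $\dz[b]/(b^2)$ has quotient $A/bA$ modulo $A[[t]]$, which is not projective.

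\textbf{Effect on your final step.} Invoking uniqueness of $\CC_{\Q}$ requires \eqref{CC-exp-log-1} for ${\rm Comm}({\mathcal D}et_{C,D})$ for every $f\in\vpl(A)\times\vmi(A)$ and every $g$. The identity $\log\det={\rm tr}\log$ on $A[[t]]/hA[[t]]$ only applies when $f$ acts on that module, i.e.\ when $f\in A[[t]]^*$. For $f\in\vmi(A)$, antisymmetry gives you $g\in\GG_m(A)\times\vpl(A)$, but not $g=t$ and not $g\in\vmi(A)$. These values are genuinely extra data. For example, $(f,g)\mapsto\exp(\nu(f)\ell(g)-\nu(g)\ell(f))$, with $\ell$ the $t^{-1}$-coefficient of $\log$ of the $\vmi$-component, is bimultiplicative and antisymmetric, is trivial on every pair you do check, yet is nontrivial on $(t,\vmi)$.

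\textbf{How to close it.} You need ${\rm Comm}({\mathcal D}et_{C,D})(t,v)=1={\rm Comm}({\mathcal D}et_{C,D})(v,v')$ for $v,v'\in\vmi(A)$, which is what \eqref{CC-form-1} gives for $\CC$.
\begin{enumerate}
\item The substitution $t\mapsto\lambda t$ ($\lambda\in A^*$) is an automorphism of $A((t))$ preserving $A[[t]]$, so ${\rm Comm}({\mathcal D}et)$ is invariant under it.
\item Work over the universal ring $\dz[b]/(b^K)$ (resp.\ $\dz[a,b]/(a^K,b^K)$) with $\lambda^{\pm1}$ adjoined. Let $c={\rm Comm}(t,1-bt^{-j})$ (resp.\ $c={\rm Comm}(1-at^{-i},1-bt^{-j})$).
\item Using ${\rm Comm}(\lambda,v)=\lambda^{\nu(v)}=1$, invariance gives $c(\lambda^{-j}b)=c(b)$ (resp.\ $c(\lambda^{-i}a,\lambda^{-j}b)=c(a,b)$).
\item Hence every nonconstant monomial coefficient of $c$ vanishes. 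Specializing to $a=b=0$ shows the constant term is $1$.
\end{enumerate}
With this added, your reduction goes through.

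\textbf{A smaller point.} A topological isomorphism ${\mathcal K}_{C,D}\simeq A((t))$ need not carry $\hat{\oo}_{C,D}$ onto $A[[t]]$. For $D$ cut out by $x$ at the cusp of $y^2=x^3$ one gets $k[[t^2,t^3]]\subset k((t))$. So your identification also needs the standard fact that ${\rm Comm}({\mathcal D}et)$ does not depend on the $c$-lattice used to define ${\mathcal D}et$: changing the lattice conjugates the homomorphism by a fixed object.
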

\begin{proof}
$\it 1 .$ The $A$-module $\hat{\oo}_{C,D} / g \hat{\oo}_{C,D}$ is finite projective, and $f$ acts as an automorphism of this $A$-module. Therefore $\det \left( f \mid_{\hat{\oo}_{C,D} / g \hat{\oo}_{C,D}}  \right)$ makes sense.
Now the statement follows directly from~\eqref{comm}-\eqref{det-gr}, using~\eqref{mult-isom}, since ${\mathcal D}et_{C, D}(f) \simeq(A, 0)$.

\noindent
$\it 2.$ This item follows from \cite[\S~3.2 - \S~3.3]{BBE}.
\end{proof}

\quash{
\begin{nt}  \em
The conditions of item~\ref{it-2} of Proposition~\ref{explicit} is satisfied, if $D$ is a section $\Spec A  \to C$, and
\end{nt}
}

\begin{nt}  \label{funct}
\em
Clearly, the constructions of Sections~\ref{Sec-Tate}-\ref{Sec-comm}, and, in particularly, the expression ${ \rm Comm}({\mathcal D}et_{C, D})(f,g)$,  are functorial
with respect to the ring $A$, i.e. with respect to ring homomorphisms $A \to A'$.
\end{nt}

\quash{
{\r To write on formula for $\CC$ that generalizes $\CC(a, g)$ from previous section, when $f \in \vpl(A)$, and $g \in \left( ( \vmi(A) \times \underline{\dz} ) \cap A[[t]] \right) \times \GG_m(A) \times \vpl(A)   $.
It gives the new definition of $\CC$ when we know the triviality of $\CC$ on some subgroups.}
}

\subsection{Reciprocity laws}  \label{rec_l}

We will use notation and assumptions from Sections~\ref{Sec-Tate} and~\ref{Sec-comm} and suppose additionally that $\pi$ is {\em a flat morphism}.

Recall that for any relative effective Cartier divisors $D_1$ and $D_2$ on $C$ the ideal sheaf of the relative effective Cartier divisor $D_1+D_2$ is defined by the product of the ideal sheaves of $D_1$ and $D_2$,
see~\cite[Lemma 0B8U]{Stacks}.

We will say that    effective Cartier divisors $D_1$ and $D_2$ on $C$ are disjoint if the closed sets which are supports of  $D_1$ and $D_2$ do not intersect.

By definition,
for any open set $U \subset C$  the set ${\mathcal S}_{C/A}(U) \subset \oo_C(U)$ is the set   of relative regular elements from $\oo_C(U)$, i.e. the elements that are regular after restriction to each fiber of $\pi |_U$.
(Recall that a regular element is, by definition, a non-zero divisor. Besides,  any element from ${\mathcal S}_{C/A}(U)$  is also a regular element from $\oo_C(U$) and defines the relative effective Cartier divisor on $U$ for $\pi |_U$, see \cite[Lemma 062Y]{Stacks}.)

Now,  by definition, {\em the sheaf of relative meromorphic functions} ${\mathcal R}_{C/A}$ is the sheaf on~$C$ associated with a presheaf of rings
$
U \mapsto {\mathcal S}^{-1}_{C/A} (U) \oo_C(U)
$. (We will use also the notation ${\mathcal R}_{C/S}$ for ${\mathcal R}_{C/A}$, when $S = \Spec A$.)

We have the natural embedding  of $\oo_C$ into $ {\mathcal R}_{C/A}$. Therefore for any relative effective Cartier divisor $D \subset C$ we have a natural embedding of invertible ideal sheaf $\oo_C(-D)$ of $D$ into $ {\mathcal R}_{C/A}$, and hence a natural embedding of the invertible sheaf ${\oo_C(D) = \oo_C(-D)^{\otimes -1}}$ into ${\mathcal R}_{C/A}$.

Let $D \subset C$ be a relative effective Cartier divisor such that $\pi |_D$ is proper.
By Proposition~\ref{exact-seq}, $\pi_* \oo_D$ is finite locally free. Therefore there is a norm map
$$
\nm\nolimits_{D/A} \, : \, \pi_* \oo_D \lrto \oo_{\Spec A}
$$
that maps a function
$h$ to the determinant of the multiplication by
$h$, which is an endomorphism of the $\oo_{\Spec A}$-module $\pi_* \oo_D$. And for any $f \in \oo_C(V)^*$, where an open set $V \subset C$ contains $D$, we set
$
f(D) = \nm\nolimits_{D/A}(f)  \, \in \, A^*
$. (We note that for the empty effective Cartier divisor $D$ the image of $\nm\nolimits_{D/A}$ is $1$.)

For any relative effective Cartier divisors $D_1$ and $D_2$ on $C$ such that  $\pi|_{D_1}$ and $\pi |_{D_2}$ are proper, and for any $f \in \oo_C(V)^*$,  where an open set $V \subset C$ contains $D_1$ and $D_2$,  we set
$f(D_1 - D_2) = f(D_1) f(D_2)^{-1}$.  For any other appropriate relative effective Cartier divisors $E_1$ and $E_2$ such that $\oo_C(D_1) \oo_C(-D_2) = \oo_C(E_1) \oo_C(-E_2)$ in $ {\mathcal R}_{C/A}$,
i.e. ${D_1 + E_2 = D_2 + E_1}$,
 we have $f(D_1 -D_2) = f(E_1 -E_2)$, since ${f(D_1 + E_2) = f(D_1) f(E_2)}$.

Let ${\mathcal R}_{C/A}(C)^{*}$ be the group of invertible elements in the ring ${\mathcal R}_{C/A}(C)$.
For  ${g \in  {\mathcal R}_{C/A}(C)^{*}}$ we will say that $(g) = D_1 - D_2$, where $D_1$ and $D_2$ are any relative effective Cartier divisors on $C$, when $g \oo_C = \oo_C(D_2) \oo_C(-D_1)$  in $ {\mathcal R}_{C/A}$.  (This is exactly the case when $g$ comes from $\oo_C(W)^*$ and its embedding  into $   {\mathcal R}_{C/A}(C)^{*}$, where an open set $W$ is the complement to a relative effective Cartier divisor in~$C$.) For any ${f \in \oo_C(V)^*}$,  where an open set $V \subset C$ contains $D_1$ and $D_2$, we set
{${f((g)) = f(D_1 -D_2)}$}.

Let $D \subset C$ be a relative  effective Cartier divisor, $U \subset C$ be an open subset that contains $D$. Then there is a natural homomorphism of $A$-algebras: $\oo_C(U \setminus D) \to {\mathcal K}_{C, D}$.

\begin{Th}  \label{Weil-gen}
Let $\pi : C \to \Spec A$ be a flat, proper, finitely presented morphism of schemes such that any fiber  of  $\pi$ is equidimensional of dimension~$1$.
 \begin{enumerate}
 \item \label{general} Let $D \subset C$ be a relative effective Cartier divisor such that the support of  $D$ intersects every irreducible component of $\pi^{-1}(s)$  for any $s \in \Spec A$.
 Let ${f, g \in \oo_C(C \setminus D)^*}$. We have
 \begin{equation}   \label{commd}
 { \rm Comm}({\mathcal D}et_{C, D})(f,g)  =1  \, \mbox{.}
 \end{equation}
 \item \label{ii-2} Under conditions of item~\ref{general} suppose additionally that $D= D_1 + \ldots + D_n$, where  $D_i $ are relative effective Cartier divisors, $D_i$ and $D_j$ are disjoint for any $i \ne j$.
 We have
 \begin{equation}   \label{commd-prod}
 \prod_{1 \le i \le n}  { \rm Comm}({\mathcal D}et_{C, D_i})(f,g)  =  { \rm Comm}({\mathcal D}et_{C, D})(f,g)  =1   \, \mbox{.}
 \end{equation}
 \item  \label{ii-3}
 Let $f ,g \in  {\mathcal R}_{C/A}(C)^{*}$ such that $(f)= D_1 - D_2$ and $(g) = E_1 -E_2$, where $D_1, D_2, E_1, E_2$ are relative effective Cartier divisors. Suppose that the divisors ${D_1 +D_2}$ and ${E_1 +E_2}$ are disjoint,
 and the support of the divisor $D_1 +D_2 + E_1 +E_2$  intersects every irreducible component of $\pi^{-1}(s)$  for any $s \in \Spec A$. We have
 $$
 f ((g))  = g((f))   \, \mbox{.}
 $$
 \item  \label{ii-4}  Let $H= H_1 + \ldots + H_n$, where   $H_i$  is a relative effective Cartier divisors and the $A$-algebra ${\mathcal K}_{C, H_i}$ is topologically isomorphic to the $A$-algebra $A((t_i))$ for any $i$. Let  $D_1, D_2, E_1, E_2$ be relative effective Cartier divisors such that any two of divisors $D_1 +D_2$, $E_1 +E_2$, $H_1, \ldots, H_n $ are disjoint.  Let $J = D_1 + D_2 +E_1 + E_2 + H$  and $f,g  \in \oo_C(C \setminus J)^*$ such that  on $C \setminus H$ the following condition is satisfied
 $$
 (f |_{C \setminus H}) = D_1 - D_2  \, \mbox{,} \qquad \qquad (g |_{C \setminus H}) = E_1 -E_2 \, \mbox{.}
 $$
 Suppose that the support of  $J$  intersects every irreducible component of $\pi^{-1}(s)$  for any $s \in \Spec A$. We have
 $$
  g(D_1 -D_2) f(E_1 -E_2)^{-1}= \prod_{1 \le i \le n} \CC\nolimits_{H_i}(f,g)  \, \mbox{,}
 $$
 where $\CC_{H_i}$ is the  Contou-Carr\`{e}re symbol constructed by ${\mathcal K}_{C, H_i}$.
 \item  \label{ii-last} Let  $\pi^{-1}(s)$ be geometrically reduced for any $s \in \Spec A$. Then in the previous items of this theorem we do not need the conditions on intersections of the supports of relative effective Cartier divisors with the fibers $\pi^{-1}(s)$.
 \end{enumerate}
\end{Th}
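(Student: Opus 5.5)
The core of the argument is item~\ref{general}. I will prove it by a global-to-local determinant argument. The idea is to construct a homomorphism from $\oo_C(C\setminus D)^*$ to ${\mathcal P}ic^{\dz}(A)$ which is canonically isomorphic to the restriction of ${\mathcal D}et_{C,D}$. This homomorphism will factor through a commutative structure for which the commutator is visibly trivial.

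For $g\in \oo_C(C\setminus D)^*$, the multiplication by $g$ identifies $\oo_C(nD)$ with a subsheaf of $\oo_C(mD)$ for suitable $n\ll m$, since $g$ and $g^{-1}$ have poles only along $D$. By Proposition~\ref{exact-seq} and the usual comparison, $\det(\hat\oo_{C,D}\mid g\hat\oo_{C,D})$ is canonically isomorphic to the relative determinant of cohomology $\det R\pi_*(g\oo_C)\otimes(\det R\pi_*\oo_C)^{-1}$, where $g\oo_C$ is the invertible subsheaf of ${\mathcal R}_{C/A}$. To obtain this I compare both sides with $\det R\pi_*\oo_C(mD)/\oo_C(-nD)$, using the exact sequences $0\to\oo_C(-nD)\to\oo_C(mD)\to \oo_C(mD)/\oo_C(-nD)\to 0$. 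Here $R\pi_*$ of a line bundle is a perfect complex, because $\pi$ is flat, proper and finitely presented.

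The key point is that multiplication by $g$ is then an isomorphism of sheaves $\oo_C\xrightarrow{\sim} g\oo_C$. Hence $\det R\pi_*(g\oo_C)\simeq\det R\pi_*\oo_C$ globally and canonically. This gives a trivialization of ${\mathcal D}et_{C,D}(g)$, compatible with the monoidal structure~\eqref{det-gr} and with the commutativity constraints. Since the commutator of a trivialized homomorphism is $1$, we obtain \eqref{commd}.

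The main obstacle is the hypothesis on $D$ meeting every irreducible component of each fiber. It is needed so that $\oo_C(C\setminus D)$ is essentially affine-like near the fibers and the comparison of the global and local determinants (``adelic'' complex $\oo_C(C\setminus D)\oplus\hat\oo_{C,D}\to{\mathcal K}_{C,D}$) computes $R\pi_*$. Concretely, I would check that $C\setminus D$ is affine over $A$, which holds fiberwise because every component of each fiber meets $D$, and then globally by a standard fiberwise criterion. With this, the complex $\oo_C(C\setminus D)\oplus g\hat\oo_{C,D}\to {\mathcal K}_{C,D}$ computes $R\Gamma(C,g\oo_C)$, and the determinant comparison is canonical. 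The argument is functorial in $A$ by Remark~\ref{funct}, so I may reduce to $A$ of finite type over $\Z$ if convenient.

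The remaining items follow from item~\ref{general}.
\begin{enumerate}
\item For item~\ref{ii-2}, the disjointness of the $D_i$ gives ${\mathcal K}_{C,D}=\prod_i{\mathcal K}_{C,D_i}$ and $\hat\oo_{C,D}=\prod_i\hat\oo_{C,D_i}$. Hence ${\mathcal D}et_{C,D}=\bigotimes_i{\mathcal D}et_{C,D_i}$, and the commutator is multiplicative over the factors.
\item For item~\ref{ii-4}, take $D=J$ and decompose it into the disjoint pieces $D_1+D_2$, $E_1+E_2$ and $H_1,\dots,H_n$. At $H_i$, Proposition~\ref{explicit}(\ref{it-2}) gives $\CC_{H_i}(f,g)$. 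At $D_1+D_2$, where $g$ is a unit, I would apply Proposition~\ref{explicit}(\ref{it-1}) after writing $f$ locally as a quotient of elements of $\hat\oo$ generating $\oo(-D_1)$ and $\oo(-D_2)$. By bimultiplicativity this yields $\det(g\mid \oo_{D_1})\det(g\mid\oo_{D_2})^{-1}=g(D_1-D_2)$, possibly up to inversion, and the convention is fixed by comparing with the antisymmetry. Symmetrically, the contribution at $E_1+E_2$ is $f(E_1-E_2)^{-1}$. Item~\ref{ii-2} then gives the formula.
\item Item~\ref{ii-3} is the case $n=0$ of this argument, with $f,g$ regarded on the complement of $J$.
\end{enumerate}

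For item~\ref{ii-last}, I enlarge $D$ by adding an auxiliary relative effective Cartier divisor $D'$, disjoint from the others and meeting every component of every fiber, on which $f,g$ are units. Its contribution is then trivial by Proposition~\ref{explicit}(\ref{it-1}), since $\hat\oo_{C,D'}/g\hat\oo_{C,D'}=0$. Such a $D'$ exists Zariski-locally on $\Spec A$, because geometrically reduced fibers are generically smooth: one takes sections through smooth points and spreads them out. The identities are Zariski-local on $\Spec A$, since they are equalities in $A^*$, so this local existence suffices.
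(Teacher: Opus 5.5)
\textbf{Item 5 fails as written.} Your items 1--4 follow the paper's route. You trivialize ${\mathcal D}et_{C,D}$ on $\oo_C(C\setminus D)^*$ by comparing it with determinants of global cohomology and multiplying by $g$. You then split ${\mathcal K}_{C,D}$ over the disjoint pieces and evaluate the local commutators with Proposition~\ref{explicit}.

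The problem is in item~5, where you claim the auxiliary divisor $D'$ exists Zariski-locally on $\Spec A$ by taking sections of $\pi$ through smooth points of $\pi^{-1}(s)$. A section over a Zariski neighbourhood of $s$ passes through a $k(s)$-rational point of $\pi^{-1}(s)$, and such points need not exist. For $A=\mathbb{R}$ and $C$ the smooth conic $x^2+y^2+z^2=0$ there is no section at all, and Zariski localization on $\Spec \mathbb{R}$ is vacuous. Geometric reducedness only gives a dense smooth open subset of the fibre. Sections through its points exist only after an \'etale base change (Stacks, Tag 055U), which is why the paper works on an \'etale neighbourhood $W$ of $s$.

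This forces two further steps that you do not supply:
\begin{enumerate}
\item An \'etale descent step. Norms, ${\rm Comm}$ and $\CC$ commute with base change (Remark~\ref{funct}). Finitely many affine \'etale neighbourhoods covering $\Spec A$ give a faithfully flat, hence injective, map $A\to A'$, so an equality in $A^*$ can be checked in $A'^*$.
\item The hypothesis of item~1 over all of $W$. The enlarged divisor must satisfy it over every point of $W$, not only over $s$, and ``spreading out'' does not give this. The paper shrinks $W$ using openness of ampleness (EGA IV, Cor.~9.6.4), noting that the proof of item~1 only needs the line bundle of the enlarged divisor to be ample.
\end{enumerate}

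\textbf{Smaller points.} In item~4 the signs are forced, not a convention. Proposition~\ref{explicit}(1) together with antisymmetry gives ${\rm Comm}({\mathcal D}et_{C,D_1+D_2})(f,g)=g(D_1-D_2)^{-1}$ and ${\rm Comm}({\mathcal D}et_{C,E_1+E_2})(f,g)=f(E_1-E_2)$. The values you wrote are both inverted, and with them item~2 would produce $\prod_i \CC\nolimits_{H_i}(f,g)^{-1}$.

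In item~1 there is no general fibrewise criterion for affineness. For example, $\mathbb{A}^2\setminus\{0\}\to\mathbb{A}^1$ is flat and finitely presented with affine fibres but is not affine. The correct route is that $D$ meeting every component makes $\oo_C(D)|_{\pi^{-1}(s)}$ ample, hence $\oo_C(D)$ is ample by EGA IV, Cor.~9.6.4. This is exactly how the paper uses the hypothesis: ampleness yields Serre vanishing and the explicit complex \eqref{compl-det} of finite projective modules.

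\textbf{Your item-1 route may be stronger than you think.} The comparison you actually describe runs through the sequences $0\to\oo_C(-nD)\to g\oo_C\to g\oo_C/\oo_C(-nD)\to 0$, Proposition~\ref{exact-seq}, and perfectness of $R\pi_*$ of line bundles. As far as I can see, it does not need the component hypothesis at all. Carried out with Knudsen--Mumford determinants of perfect complexes, and checked against \eqref{iso-tens} and \eqref{mult-isom}, it would give item~1 for arbitrary $D$. Item~5 would then need neither auxiliary divisors nor geometric reducedness. That is what your route buys; the paper's route buys an elementary two-term complex of finite projective modules instead of the general determinant formalism over a non-Noetherian base.
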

\begin{proof}
$\it 1.$
By~\cite[Chapter~7, Prop.~5.5, Exerc.~5.4]{Liu} the restriction of   $D$ to   ${\pi^{-1}(s)}$ is an ample Cartier divisor on ${\pi^{-1}(s)}$ for any $s \in \Spec A$.
Therefore from \cite[Corollaire 9.6.4]{EGA-IV-3} it follows that  $\oo_{C}(D)$ is an ample invertible sheaf on $C$.  Now arguing as in Proposition~6 of~\cite{O2} (see the corresponding references in the proof of this proposition) and using Proposition~\ref{exact-seq} we have that   there is an integer $n > 0$ such that for any integers $q > 0$ and any $m \ge n$ $H^q(C, \oo_C(mD)) =0 $ and $H^0(C , \oo_C(mD))$ is a finite projective $A$-module. Besides, for an integer $r \le m$ the cohomology groups of the complex
\begin{equation}   \label{compl-AG}
H^0(C, \oo_C(mD))  \lrto H^0(C, \oo_C(mD)/\oo_C(rD) )
\end{equation}
coincide with the cohomology groups $H^*(C, \oo_C(rD))$.  Then    complex~\eqref{compl-AG} is naturally quasi-isomorphic (via embedding of complexes) to the complex
\begin{equation}  \label{compl2}
\oo_C(C \setminus D)   \lrto {\mathcal K}_{C, D}/\hat{\oo}_{C,D,r}    \, \mbox{,}
\end{equation}
where $\hat{\oo}_{C,D,r}  = \varprojlim_{t > 0}  H^0(C, \oo_C(rD) / \oo_C( (r- t)D))$.

Now for any $g \in \oo_C(C \setminus D)^* $  the complex
\begin{equation}  \label{compl3}
\oo_C(C \setminus D)   \lrto {\mathcal K}_{C, D}/g\hat{\oo}_{C,D,r}    \, \mbox{.}
\end{equation}
is isomorphic to complex~\eqref{compl2}  via multiplication by $g^{-1}$.
Hence and from complexes~\eqref{compl-AG}-\eqref{compl2} it is easy to see that there is an integer ${k > n}$ such that for any integer ${l \ge k}$  complex~\eqref{compl3}
is naturally quasi-isomorphic to the complex of finite projective $A$-modules
\begin{equation} \label{compl-det}
H^0(C, \oo_C(lD)) \lrto  \hat{\oo}_{C,D,l} /g\hat{\oo}_{C,D,r}  \, \mbox{.}
\end{equation}
Now we define an object $\det({\mathcal H}^*(g\hat{\oo}_{C,D,r}) )$ from
${\mathcal P}ic^{\dz}(A)$ as the determinant of the complex~\eqref{compl-det}, using also the  rank difference as an element from $\underline{\dz}$. (Clearly, these determinants are canonically isomorphic for various $l$, so we can identify the corresponding projective $A$-modules of rank $1$  by taking the inductive limit with respect to $l$ similar as we did in Section~\ref{Sec-comm} for the definition of $\det(  \cdot \mid \cdot)$.)

It is easy to see that for any integers $r_1, r_2$ and elements $g_1, g_2 \in \oo_C(C \setminus D)^* $ there is a canonical isomorphism
\begin{equation}   \label{coh-isom}
\det ({\mathcal H}^*(g_1\hat{\oo}_{C,D,r_1}) )  \otimes \det( g_1\hat{\oo}_{C,D,r_1} \mid  g_2\hat{\oo}_{C,D,r_2} ) \,  \xrightarrow{\sim} \,  \det ({\mathcal H}^*(g_2\hat{\oo}_{C,D,r_2}) )
\end{equation}
that satisfies the associativity diagram  with the help  of  isomorphism~\eqref{iso-tens}   if we add also  $\det( g_2\hat{\oo}_{C,D,r_2} \mid  g_3\hat{\oo}_{C,D,r_3} )$.
(First, isomorphism~\eqref{coh-isom}) is being constructed  when the $c$-lattice $g_1\hat{\oo}_{C,D,r_1}$ contains or is contained in the $c$-lattice $g_2\hat{\oo}_{C,D,r_2}$.)

Now for any $g \in \oo_C(C \setminus D)^*$  from~\eqref{hom-det} and~\eqref{coh-isom} we have a canonical isomorphism
\begin{equation}  \label{final-iso}
{\mathcal D}et_{C, D}(g)  \, \xrightarrow{\sim}  \, \det ({\mathcal H}^*(\hat{\oo}_{C,D}) )^{-1}  \otimes \det ({\mathcal H}^*(g \hat{\oo}_{C,D}) )
\end{equation}
Hence the homomorphism ${\mathcal D}et_{C, D}$ restricted to the subgroup $\oo_C(C \setminus D)^*   \subset {\mathcal K}^{*}_{C, D}$ is isomorphic to the trivial homomorphism, since the multiplication by $g$
induces the isomorphism between $\det ({\mathcal H}^*(\hat{\oo}_{C,D}) )$  and  $\det ({\mathcal H}^*(g \hat{\oo}_{C,D}) )$, and this isomorphism gives the isomorphism between the right hand side of~\eqref{final-iso}
and $(A, 0)$ that we need. Therefore from~\cite[Remark~2.7, Corollary~2.10]{OZ0} we have~\eqref{commd}.

\noindent  $\it 2.$ Since $D_i$ and $D_j$ are disjoint for any $i \ne j$, we have that
$$
 {\mathcal K}_{C, D} \simeq \prod_{1 \le i \le n }   {\mathcal K}_{C, D_i}  \, \mbox{.}
$$
Hence the homomorphism ${\mathcal D}et_{C, D}$  is isomorphic to the sum of homomorphisms ${\mathcal D}et_{C, D_1}$, $\ldots$, ${\mathcal D}et_{C, D_n}$  (cf.~\cite[Lemma~4.3]{OZ0}). Therefore by~\cite[Corollary~2.9]{OZ0} and item~\ref{general} of this theorem
we have~\eqref{commd-prod}.

\noindent $\it 3.$ This item follows from item~\ref{ii-2} of this theorem and item~\ref{it-1} of Proposition~\ref{explicit}.

\noindent $\it 4.$  This item follows from item~\ref{ii-2} of this theorem and  Proposition~\ref{explicit}.

\noindent $\it 5.$
The statements of items~\ref{ii-2}--\ref{ii-4} are about equalities of elements from $A^*$.
We have functoriality from Remark~\ref{funct}. Therefore it is enough to prove that for any point ${s \in \Spec A}$ there is an \'{e}tale neighbourhood $W$ of $s$ such that the statements of items~\ref{ii-2}--\ref{ii-4} are valid after restriction to this neighbourhood.

We will find (see below) an \'{e}tale neighbourhood $\widetilde{W}$ of $s$, where there is an additional relative effective  Cartier divisor $D$ on $  C_{\widetilde{W}} = C \times_{\Spec A} \widetilde{W}$ such that $D$ is disjoint with the pullbacks in $ C_{\widetilde{W}}$ of
all the other relative effective Cartier divisors that appear in items~\ref{ii-2}--\ref{ii-4} and the support of $D$ intersects every irreducible component of the pullback $\widetilde{\pi^{-1}(s)}$ of $\pi^{-1}(s)$ in $C_{\widetilde{W}}$. Then by \cite[Chapter~7, Prop.~5.5, Exerc.~5.4]{Liu} the restriction of   $D + K$ to   $\widetilde{\pi^{-1}(s)}$ is an ample Cartier divisor on $\widetilde{\pi^{-1}(s)}$, where $K$ is any relative effective Cartier divisor on $C_{\widetilde{W}}$.   Therefore by \cite[Corollaire 9.6.4]{EGA-IV-3} there is an open neighbourhood $W \subset \widetilde{W}$ of the preimage of $s$ in $\widetilde{W}$ such that $\oo_{C_{W}}(D+K  |_{C_W})$ is an ample invertible sheaf on $C_W = C \times_{\Spec A} W$. This is enough for the proof of item~\ref{general} (and other items) after addition the divisor $D$, because  the main ingredient in the proof is the ampleness of
the invertible sheaf that corresponds to
 a relative effective Cartier divisor, and for any open $U \subset C_W$ that contains $D |_{C_W}$ and for any $f,g \in \oo_{C_W}(U)^*$ by item~\ref{it-1} of Proposition~\ref{explicit} we have ${ \rm Comm}\left({\mathcal D}et_{C_W, D  |_{C_W}} \right) \left(f,g \right)  =1$.

Now we construct such \'{e}tale neighbourhood $\widetilde{W}$ of $s$. By~\cite[Lemma~056V]{Stacks}, the fiber $\pi^{-1}(s)$ contains a dense open subset  which is smooth over the residue field $k(s)$ of $s$.
Take a smooth point on the fiber. Since $\pi$ is flat, by  \cite[Lemma~01V9]{Stacks} $\pi$ is smooth in a neighbourhood $V$  of this point on $C$. By \cite[Lemma~055U]{Stacks}  there is an  \'{e}tale neighbourhood  $W'$ of $s$
and a section  $ \alpha :  W' \to V_{W'}$. Since $\pi $ and its base change are separated, the image of $\alpha$, which we denote by the same letter, is a closed subscheme of $C_{W'}$. Now by \cite[Theorem 17.12.1]{EGA4}
$\alpha$ is a (relative effective) Cartier divisor on $C_{W'}$.  Taking various smooth  points on $\pi^{-1}(s)$ over $k(s)$, and taking smaller open subsets in \'{e}tale neighbourhoods of $s$ such that the involved relative effective divisors would be disjoint, we construct  an  \'{e}tale neighbourhood $\widetilde{W}$ of $s$ and a relative effective  Cartier divisor $D$ on $  C_{\widetilde{W}}$ which we need.

\end{proof}

\quash{\begin{nt}  \em
Theorem~\ref{Weil-gen} is a generalization of the Weil reciprocity law for a projective curve (see~\cite[Chapter~III, \S~4]{Se}) to a relative case and when a fiber can be singular.
\end{nt}}

\section{Deligne pairing}  \label{Deligne}
We give the definition of the Deligne pairing  for a pair of line bundles on the family of projective  curves by means of the generalisation of Weil reciprocity law to the relative case as in~Theorem~\ref{Weil-gen}.

The Deligne pairing was introduced in~\cite[\S~6]{D1}, \cite[Expos\'e~XVIII, \S~1.3]{SGA4}  for the case of smooth proper morphism of relative dimension~$1$.  Later the Deligne construction was generalized to families of arbitrary dimension, i.e. to morphisms of relative dimension greater than $1$ between schemes, see~\cite{E}.   Note that in~\cite{E} it was assumed that a morphism is projective, flat and Cohen-Macaulay, and  schemes are Noetherian.

We will use the construction of the Deligne pairing via the generalization of the Weil reciprocity law that are similar to construction from~\cite[Chapter~XIII, \S~5]{ACG} with the difference that we will replace the complex analytic topology by the \'etale topology.

During this section we assume that {\em $\pi : C \to S$ is a flat, proper finitely presented morphism of arbitrary schemes such that
any fiber of $\pi$ is geometrically reduced and equidimensional of dimension $1$.}

Let $L$ be an  invertible sheaf  of  $\oo_C$-modules. By { \em a relative meromorphic section} $l$ of $L$ we mean an isomorphism of invertible $\oo_C$-modules
$$l \; : \; L \, \stackrel{\sim}{\lrto}  \, \oo_C(D_1) \oo_C(-D_2)  \subset {\mathcal R}_{C/S}  \, \mbox{,}$$
where ${\mathcal R}_{C/S}$  is the sheaf of relative meromorphic functions (see definitions in Section~\ref{rec_l},  where one has to change an affine scheme $\Spec A$ to an arbitrary scheme $S$), and  $D_1$ and $D_2$ are relative effective Cartier divisors on $C$. We denote the divisor  ${ (l)= D_1 - D_2 }$. By  ${\mathcal R}_{C/S}^*(L) $ we denote {\em the set of all relative meromorphic sections} of $L$.
We note that the subset ${\mathcal R}_{C/S}^*(\oo_C)  \subset {\mathcal R}_{C/S}(C)^* $ is a subgroup that consists of elements $h \in {\mathcal R}_{C/S}(C)^*$ such that $(h)= D_1 - D_2$, where $D_1$ and $D_2$ are relative effective Cartier divisors.

Suppose that ${ l_1, l_2 \in {\mathcal R}_{C/S}^*(L)}$. Then we have $l_1 = g \circ l_2$, where $g \in {\mathcal R}_{C/S}^*(\oo_C)  $ and  $g$ as element from  ${\mathcal R}_{C/S}(C)^*$ acts on  ${\mathcal R}_{C/S}$ by multiplication by $g$. We will write also that
$l_1 / l_2 = g$ and $l_1 = g l_2$. Let ${ (l_1)= E_1 - E_2 }$ and ${ (l_2)= F_1 - F_2 }$, where $E_1, E_2, F_1, F_2$ are relative effective Cartier divisors on $C$. We will say that the divisors
${ (l_1)}$ and ${ (l_2)}$ are disjoint if the effective Cartier divisors $E_1 + E_2$ and $F_1 + F_2$ are disjoint.

Suppose that $L$ and $M$ are two invertible sheaves of $\oo_C$-modules. Suppose that there is  a pair $(l,m)$, where  $l \in {\mathcal R}_{C/S}^*(L)$ and $m \in {\mathcal R}_{C/S}^*(M)$,
such that the divisors $(l)$ and $(m)$ are disjoint. Suppose that $S = \Spec A $.  We consider the $A$-module $\langle L , M \rangle$ called {\em the Deligne pairing} which is the quotient of the $A$-module  freely generated by all such pairs $(l,m)$
modulo the equivalence relation generated
by
$$
(fl, m) \sim f ((m)) (l,m )    \, \mbox{,}  \qquad  (l, gm) \sim g((l))(l,m)  \, \mbox{,}
$$
where corresponding $f,g \in {\mathcal R}_{C/S}^*(\oo_C)$. Denote the class of the pair $(l,m)$ in $\langle L , M \rangle$ by $\langle l , m \rangle$. From~items~\ref{ii-3} and~\ref{ii-last} of Theorem~\ref{Weil-gen}  it follows that $\langle L , M \rangle$ is a free $A$-module of rank $1$ (see also~\cite[Chapter~XIII, \S~5]{ACG}).

Now suppose that $S$ is an arbitrary scheme.
\begin{lemma}  \label{et-n}
For any point $s \in S$ there is an \'etale affine neighbourhood $U$ of  $s$ such that the pullbacks of $L$ and $M$ to $C_{U}$ have relative meromorphic sections $l$ and $m$  with disjoint divisors.
\end{lemma}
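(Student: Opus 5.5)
The plan is to realize each of $L$ and $M$, \'etale locally near $s$, as $\oo_C(D_1)\oo_C(-D_2)$ for relative effective Cartier divisors supported on disjoint sets of sections through smooth points. The construction reuses the \'etale-local sections from the proof of item~\ref{ii-last} of Theorem~\ref{Weil-gen}.

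\emph{Step 1: an ample divisor.} Following the last paragraph of that proof, I first choose finitely many smooth $k(s)$-points of $\pi^{-1}(s)$ (after a finite separable extension if needed), at least one on each irreducible component. Using \cite[Lemma~01V9, Lemma~055U]{Stacks} and \cite[Theorem 17.12.1]{EGA4}, I obtain an affine \'etale neighbourhood $W'$ of $s$ and pairwise disjoint sections $\alpha_1,\dots,\alpha_r$ of $C_{W'}\to W'$, each a relative effective Cartier divisor, and I set $D=\sum\alpha_i$. By \cite[Chapter~7, Prop.~5.5, Exerc.~5.4]{Liu} and \cite[Corollaire 9.6.4]{EGA-IV-3}, after shrinking $W'$ to an affine open neighbourhood of the preimage of $s$, the sheaf $\oo(D)$ is ample on $C_{W'}$. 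I then choose a second collection of sections $\beta_j$, disjoint from the $\alpha_i$ and again meeting every component of the fibre, and set $D'=\sum\beta_j$; after shrinking, $\oo(D')$ is also ample.

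\emph{Step 2: sections of twists.} For $m\gg0$ the sheaves $L(mD)$, $\oo(mD)$, $M(mD')$ and $\oo(mD')$ are globally generated with vanishing $H^1$ over the affine base, by the argument cited in item~\ref{general} of Theorem~\ref{Weil-gen} (\cite[Proposition~6]{O2}). Consider a global section $\sigma$ of $L(mD)$ whose restriction to the fibre over $s$ is regular, i.e.\ vanishes at no generic point of the reduced fibre. Such a section is relatively regular near $s$ by \cite[Lemma 062Y]{Stacks} and openness of that condition. Its zero divisor $Z(\sigma)$ is then a relative effective Cartier divisor near the fibre, and it gives the relative meromorphic section $l\colon L\simeq \oo(Z(\sigma))\oo(-mD)$. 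In the same way, sections $\tau$ of $M(mD')$ give $m\colon M\simeq\oo(Z(\tau))\oo(-mD')$.

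\emph{Step 3: avoidance, the main obstacle.} I need $Z(\sigma)+mD$ and $Z(\tau)+mD'$ to be disjoint. The supports of $D$ and $D'$ are already disjoint, so it remains to arrange four things: $Z(\sigma)$ avoids $D'$, $Z(\tau)$ avoids $D$, $Z(\sigma)\cap Z(\tau)=\emptyset$, and each zero divisor misses the generic points of the fibre.

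Over the fibre, for $m\gg0$, surjectivity of restriction to any finite subscheme lets me choose $\sigma_s$ nonvanishing on $D'_s$ and at chosen points of each component. The remaining difficulty is to then choose $\tau_s$ nonvanishing on the finite set $Z(\sigma_s)\cup D_s$ and at generic points. If $k(s)$ is finite, a generic choice may not exist. I would handle this by a further finite \'etale base extension, replacing $k(s)$ by a large enough finite extension realized by an \'etale neighbourhood, or alternatively by the standard prescription of values at finitely many points.

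Once fibrewise disjointness holds, I lift $\sigma_s$ and $\tau_s$ to global sections over the affine base, which is possible because $H^1$ vanishes and cohomology commutes with base change. Disjointness is a closed-set condition, and $\pi$ is proper, so the locus of points of $C$ where both divisors meet has closed image in the base not containing $s$. Shrinking $U$ to an affine open neighbourhood of $s$ avoiding this image completes the argument.
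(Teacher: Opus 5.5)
Your proposal is correct and follows essentially the same route as the paper. Both build an \'etale-local ample divisor from sections through smooth points of the fibre, take sections of $L(kD)$ and, with a second such divisor $D'$, of a twist of $M$, make their divisors disjoint on the fibre over a point above $s$, and then shrink the base using properness. Your Step~3 spells out the fibrewise avoidance that the paper only sketches; the prescription of nonzero values at finitely many closed points works over any residue field, as long as you choose the twist of $M$ large after $\sigma$ is fixed rather than reusing the same $m$.
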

\begin{proof}
Similar to  the proof of item~\ref{ii-last} of Theorem~\ref{Weil-gen} we find an \'etale  affine neighbourhood $V $ of  $s$ and a relative effective Cartier divisor $D$ on $C_{V}$,
where $\pi_V : C_V \to V$ is the base change,
 such that $\oo_{C_{V}}(D)$ is an ample invertible sheaf. Let $\widetilde{L}$ be the pullback of $L$ to $C_V$.    Now   there is an integer $n >0 $ such that for any integer $k >n $  we have
  $R^q (\pi_V)_* \left(\widetilde{L} (kD) \right) = 0$ for any $q >0$ and
  $(\pi_V)_* \left(\widetilde{L} (kD) \right)$
   is a finite locally  free sheaf,  and these sheaves commute with any base change,   see the proof of Proposition~6 from~\cite{O2} and also the references therein, and also~\cite[Chapter~5, Prop.~2.28]{Liu}.

   Hence, taking a smaller open subset $\widetilde{U}$ of $V$ that contains the preimage of $s$ in $V$,
   we construct a relative meromorphic section $l = l' \otimes  1_{kD}^{\otimes -1} $ of  the pullback of $L$ to $C_{\widetilde{U}}$, where $k >n$ and $k$ is big enough,    $l'$ is a relative section of the pullback of $\widetilde{L}(kD)$ to $C_{\widetilde{U}}$, and $1_{kD}$ is the relative section of the pullback of $\oo_{C_{{U}}}(k {D})$  to $C_{\widetilde{U}}$ induced by $1 \in H^0 \left(C_{\widetilde{U}}, \oo_{C_{\widetilde{U}}}\right)$. (A section will be a relative section when it does not vanish identically on any irreducible component of the fiber over any point of $\widetilde{U}$,   see~\cite[Lemma~062Y]{Stacks}).

   Analogously, but with the help of another $D$,  we construct a relative meromorphic section $m  $ of  the pullback of $M$ to $C_{U}$ such that the preimage of the divisor of $l$ in $C_U$ is disjoint from the divisor of $m$, where $U$ is an  \'etale affine neighbourhood of  a point $\widetilde{s} \in \widetilde{U}$ that is mapped to $s$. (Here,  for the construction, it is enough first to state that the preimages of divisors of $l$ and $m$ in the preimage of $C_{\widetilde{s}}$ are disjoint.)
   \end{proof}

Lemma~\ref{et-n} and the canonical isomorphism $H^1\left(S, \oo_S^*\right)  \to H^1_{etale}\left(S, \GG_m\right)$ (called Hilbert $90$, see~\cite[Theorem~03P8]{Stacks}) show that the following definition is correct.

\begin{defin}
Let $L$ and $M$ be invertible sheaves of $\oo_C$-modules, where $\pi : C \to S$ as in the beginning of this section. Let $\bar{s}  \to S$ be a geometric point and $\oo_{S, \bar{s}}$ be the local ring at $\bar{s}$ for the \'etale topology.
For the base change scheme $C_{\Spec \oo_{S, \bar{s}}} $ consider  a rank $1$ free $\oo_{S, \bar{s}}$-module $\langle L, M \rangle_{\bar{s}}$ as above. We now define an invertible sheaf $\langle L, M \rangle$ of $\oo_S$-modules
called the Deligne pairing
 by describing its sections over \'etale morphisms.
Given an \'etale morphism $U \to S$, a collection $\{ u_{\bar{s}} \in  \langle L, M  \rangle_{\bar{s}} :  \bar{s}  \to U  \}$ is a section of the pullback of $\langle L, M  \rangle$ over $U$ if and only if, for every $\bar{s}  \to U$,
there is an affine \'etale neighbourhood $U'$ of $\bar{s}$ and relative meromorphic sections $l$ of the pullback $L$ and $m$ of the pullback $M$ over $C_{U'}$ such that $u_t = \langle l,m \rangle$ for every geometric point $\bar{t}  \to U'$.
\end{defin}

The following proposition follows directly from the definition of the Deligne pairing.
\begin{prop}   \label{prop-Del-pair}
\begin{enumerate}
\item The Deligne pairing and the isomorphisms in items below are compatible with  base change and functorial with respect to isomorphisms of  sheaves.
\item \label{itt-1} Given invertible sheaves $L_1$, $L_2$, $M$ of $\oo_C$-modules there are well-defined isomorphisms
$$
\langle L_1, M  \rangle  \otimes_{\oo_S} \langle L_2, M  \rangle \stackrel{\sim}{\lrto}  \langle L_1 \otimes_{\oo_C} L_2, M  \rangle  \mbox{,}  \quad
  \langle M, L_1  \rangle  \otimes_{\oo_S} \langle M, L_2  \rangle \stackrel{\sim}{\lrto}  \langle M , L_1 \otimes_{\oo_{C}} L_2   \rangle
$$
which satisfy the  associativity conditions and are given (\'etale locally) by
$$
 \langle l_1, m  \rangle  \otimes \langle l_2, m  \rangle \stackrel{\sim}{\lrto}  \langle l_1 \otimes  l_2, m  \rangle \mbox{,}  \qquad
\langle m, l_1  \rangle  \otimes \langle m, l_2  \rangle \stackrel{\sim}{\lrto}  \langle m , l_1 \otimes l_2 \rangle   \mbox{.}
$$
\item   \label{iii-3} Given two invertible sheaves  $L$
and $M$ of $\oo_C$-modules, there is a canonical
isomorphism $\tau : \langle L , M  \rangle  \to \langle M, L  \rangle  $, where $\langle l, m  \rangle  \mapsto \langle m, l \rangle $, which is compatible with the isomorphisms in item~\ref{itt-1}.
\end{enumerate}
\end{prop}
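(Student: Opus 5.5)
The plan is to reduce everything to the affine, strictly local situation, where $\langle L,M\rangle$ is the explicit free module of rank $1$ generated by symbols $\langle l,m\rangle$. There I will write down each isomorphism on generators and check that it respects the defining relations $(fl,m)\sim f((m))(l,m)$ and $(l,gm)\sim g((l))(l,m)$. Afterwards I glue étale locally using the definition of the Deligne pairing via stalks $\langle L,M\rangle_{\bar s}$, together with Lemma~\ref{et-n} and Hilbert~90.

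For item~\ref{itt-1}, work over an affine étale $U$ with sections $l_1,l_2,m$ such that $(m)$ is disjoint from both $(l_1)$ and $(l_2)$. Such sections exist by the argument of Lemma~\ref{et-n}, applied to three sheaves instead of two. Then $l_1\otimes l_2$ is a relative meromorphic section of $L_1\otimes L_2$ whose divisor is $(l_1)+(l_2)$, so it is again disjoint from $(m)$. Define the map by $\langle l_1,m\rangle\otimes\langle l_2,m\rangle\mapsto\langle l_1\otimes l_2,m\rangle$.

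Compatibility with the relations comes down to two identities. First, $(fl_1)\otimes l_2=f(l_1\otimes l_2)$, so changing $l_1$ changes both sides by the same factor $f((m))$. Second, changing $m$ to $gm$ multiplies the left side by $g((l_1))\,g((l_2))$ and the right side by $g((l_1)+(l_2))$; these agree because $\nm$ is multiplicative on sums of divisors, $g(D+E)=g(D)g(E)$, as noted in Section~\ref{rec_l}.

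There is a subtlety: the generator-level formula requires a single $m$ disjoint from both $(l_1)$ and $(l_2)$. I therefore have to show that the resulting map is independent of the chosen triple. Any two admissible triples can be connected through a third triple that is disjoint from all the data, possibly after passing to a further étale neighbourhood. Along such a chain the relations above show the maps coincide. This is the step I expect to be the main obstacle, namely the bookkeeping of disjointness when changing representatives. I will handle it by always inserting an auxiliary section constructed as in Lemma~\ref{et-n}, whose divisor avoids all prescribed divisors on the fibre over $s$.

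Associativity is immediate on generators. Since both sides are rank-$1$ modules, equality on one generator gives equality of the maps. The second isomorphism in item~\ref{itt-1} is handled symmetrically.

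For item~\ref{iii-3}, define $\tau(\langle l,m\rangle)=\langle m,l\rangle$. The relation $(fl,m)\sim f((m))(l,m)$ in $\langle L,M\rangle$ is sent to $(m,fl)\sim f((m))(m,l)$, which is exactly the second defining relation in $\langle M,L\rangle$. So $\tau$ is well defined, and it is clearly an isomorphism with inverse $\tau$ itself. No reciprocity is needed here beyond the fact, coming from items~\ref{ii-3} and~\ref{ii-last} of Theorem~\ref{Weil-gen}, that the modules are free of rank one. Compatibility of $\tau$ with the isomorphisms of item~\ref{itt-1} is evident on generators.

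Finally, base change compatibility holds because relative meromorphic sections, relative effective Cartier divisors, disjointness and norms $\nm_{D/A}$ all commute with base change $A\to A'$, by flatness and finiteness of $\pi|_D$ (cf. Remark~\ref{funct}). An isomorphism $L\simeq L'$ transports sections, and with them the relations, which gives functoriality. All these identifications are defined on étale stalks and are compatible with restriction, so by the definition of $\langle L,M\rangle$ they glue to isomorphisms of invertible $\oo_S$-modules.
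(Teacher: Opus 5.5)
Your proposal is correct and takes essentially the same approach as the paper, which gives no argument beyond the remark that the proposition follows directly from the definition of the Deligne pairing. Your verification is the routine unpacking of that remark: you check the defining relations on generators using $g(D+E)=g(D)\,g(E)$, show independence of the chosen sections by passing through an auxiliary section as in Lemma~\ref{et-n}, and glue over the \'etale stalks $\oo_{S,\bar s}$. Over the strictly henselian rings $\oo_{S,\bar s}$ the auxiliary sections exist without further refinement, and disjointness of divisors can be checked on the closed fibre because $\pi$ is proper, so your chain-of-moves argument goes through directly there.
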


\begin{nt} \label{Gen-det} \em
By the same method as in~\cite[Chapter~XIII, Theorem~(5.8)]{ACG} by changing the complex analytic topology to the \'etale topology one can prove the following canonical isomorphism which is compatible with  base change:
$$
\langle L , M  \rangle   \stackrel{\sim}{\lrto}  \det R\pi_* ( L \otimes_{\oo_C} M) \otimes_{\oo_S}  \det R\pi_* ( L)^{\otimes -1}  \otimes_{\oo_S}  \det R\pi_* ( M)^{\otimes -1}  \otimes_{\oo_S}  \det R\pi_* ( \oo_C)  \, \mbox{.}
$$
Here for any invertible sheaf $N$ of $\oo_C$-modules one can define the invertible sheaf $\det R\pi_* ( N)$ of $\oo_S$-modules in the following explicit way by means of  \'etale topology.
Using  the proof of Lemma~\ref{et-n} (see also Proposition~\ref{exact-seq}) for any $s \in S$ we find an \'etale neighbourhood $V$ and a relative effective Cartier divisor $\widetilde{D}$ on $C_V$ such that
$$
(\pi_V)_* \left( N \left( \widetilde{D} \right) \right)    \lrto (\pi_V)_* \left( N \left( \widetilde{D}\right)/N \right)
$$
   is a complex of finite locally  free sheaves of $\oo_S$-modules. Then the determinant of this complex does not depend on the choice of $\widetilde{D}$  up to a canonical isomorphism (see the arguments after formula (4.16)
   in~\cite[Chapter~XIII, \S~4]{ACG}). Therefore various these determinants patch together
canonically into  $\det R\pi_* ( N)$.
\end{nt}

\quash{
{\r We don't use this remark further. Moreover, the second part of this remark also can be omitted, see also~\eqref{compl-AG} above.  Do I have to give the reference to Eiksson and co?
In 3 places it is written approximately the same: in the beginning of proof of item~\ref{general} of Theorem~\ref{Weil-gen}, in the proof of Lemma~\ref{et-n} and in Remark~\ref{Gen-det}. Maybe, to write one Lemma instead of this.}
}

\section{Quintets and the action of the group ind-scheme $\g$}  \label{quint}
We recall the definition of quintet from~\cite[\S~6.1, Def.~1]{O2}.

By {\em a quintet over} $A$ we mean
 a collection $(C, p, \ff, t, e)$ that consists of the following data:
 \begin{itemize}
 \item a
separated morphism  ${C \to \Spec A}$ whose fibres are one-dimensional schemes,
\item
   an $A$-point $p$ of $C$
such that $C$ is smooth in a neighbourhood of $p$ (we will denote by the same letter an $A$-point and its image in $C$),
\item  a  sheaf $\ff$ of $\oo_C$-modules which is  invertible  in a neighbourhood of $p$,
\item  a relative formal parameter $t$ at $p$, i.e.
$t$ is an element of ideal of $p$ of $\hat{\oo}_{C, p}$ that induces an isomorphism of topological $A$-algebras
$
A [[t]]     \xrightarrow{\sim}  \hat{\oo}_{C, p}  $,
\item  a formal trivialization $e$ of $\ff$ at $p$, i.e.  $\hat{\ff}_{C, p}$ is a  free   $\hat{\oo}_{C, p}$-module of rank $1$, and  $e$ is its  basis.
\end{itemize}

We note that $p$ is a relative effective Cartier divisor (see~\cite[\S~6.1]{O2}) and similar to~\eqref{compl-o} we denoted
$$
\hat{\ff}_{C,p} = \varprojlim_{n > 0}  H^0(C, \ff / \ff( - np))   \, \mbox{.}
$$

\medskip

By $\AutL (A)$ we denote {\em  the group of all $A$-automorphisms of the $A$-algebra $A((t))$ that are homeomorphisms}.
There is the following isomorphism of sets (see~\cite[\S~2.2]{O2}, \cite[\S~2.1]{O1} and references therein):
\begin{equation}  \label{isom}
\AutL (A) \,  \simeq  \,   \left\{  \left. h \in L \GG_m(A)   \,  \right|  \,  \nu(h) =1   \right\} \, \mbox{,}
\end{equation}
where the map from the left side to the right side is $\vp  \mapsto \widetilde{\vp}= \vp(t)$, and the map from the right side to the left side is
$
\widetilde{\vp} \longmapsto  \left\{ f \mapsto  f \circ \widetilde{\vp} \right\} \, \mbox{,}
$
where  $f \in A((t))$, and  $f \circ \widetilde{\vp}$ denotes the series from $A((t))$ obtained by substitution of the series $\widetilde{\vp}$ into the series $f$
instead of variable $t$.

From isomorphism~\eqref{isom} we see that $\AutL$ is a  group functor.

We note that the Contou-Carr\`{e}re symbol $\CC$ is invariant under the diagonal action of the group functor $\AutL$ on the commutative group functor
$L\GG_m \times L\GG_m$, see, e.g.,  \cite[\S~2]{OZ}, \cite{GO}.

The group functor  $\AutL$  naturally acts on the group functor $L \GG_M$, therefore the following group functor is well-defined:
$$
{\mathcal G} = L \GG_m \rtimes \AutL  \mbox{,}
$$
where for any elements $h_1, h_2 \in L \GG_m(A)$,  $\vp_1, \vp_2 \in \AutL(A)$ we have
$$
(h_1, \vp_1)(h_2, \vp_2)= (h_1 \vp_1(h_2), \vp_1 \vp_2)  \, \mbox{.}
$$

The group functors $L \GG_m$, $\AutL$ and  $\g$  are represented by  {\em ind-schemes}, see~\cite[\S~2.3]{O2}. The group ind-scheme $\g$ naturally acts on the moduli stack $\mathcal M$ (we consider the Zariski topology), where ${\mathcal M}(A)$ is the groupoid of quintets over $A$, see~\cite[\S~6.1]{O2}. The construction of this action  looks as follows.

 Starting from a quintet $(C, p, \ff, t, e)$  over $A$, i.e. $(C, p, \ff, t, e) \in {\mathcal M}(A)$,  and  from $(h, \vp ) \in \G(A)$ we define  $(C'', p'', \ff'', t'', e'') \in {\mathcal M}(A) $, where
first we define $(C', p', \ff', t', e')  \in {\mathcal M}(A)$ by action of   $\vp \in \AutL(A)$  (see the proofs in~\cite[Theorem~3]{O2}, and also~\cite[\S~17.3]{FB}, \cite[\S~4.1]{Pol}),  then we  define
$(C'', p'', \ff'', t'', e'') \in {\mathcal M}(A)$ by action of $h \in L \GG_m(A)$ on $(C', p', \ff', t', e')$ (see the proofs in~\cite[Theorem~3]{O2}, and also~\cite[\S~18.1.3]{FB}).

By taking smaller  affine open subset in $  \Spec A$
we can suppose that there is an open affine neighbourhood $U$ of $p$ in $C$ such that $U$ is smooth over $\Spec A$,
 the ideal of effective Cartier divisor $p$ restricted to $\Spec \oo_C(U)$  is generated  by a nonzerodivisor of  $\oo_C(U)$,
 and  $\oo_U \simeq \ff \mid_U$ by means a basis $\tilde{e} $ in $\ff(U)$ over $\oo_C(U)$.

 Note that $v= \tilde{e}/e$ is from $L \GG_m(A) \subset \g(A)$, and the action of $v $ on $(C, p, \ff, t, e)$ changes the trivialization from $e$ to $\tilde{e}$. Besides, we have $(h, \vp) = (h \vp(v^{-1}), \vp) (v, 1)$ in the group $\g(A)$. Therefore to construct the action of $(h, \vp)$ on $(C, p, \ff, t, e)$ we will suppose that the trivialization $e$  comes
 from the trivialization of $\ff$ on $U$, i.~e. $e$ is some~$\tilde{e}$.

 We have a  Cartesian diagram
$$
 {\xymatrix{
   {\oo_C(U)} \, \ar[d] \ar@{^{(}->}[r] &  {\oo_C(U \setminus p)} \ar[d]_{\gamma}  \\
   { \hat{\oo}_{C, p}} \, \ar@{^{(}->}[r]   & {{\mathcal K }_{C, p}}
 }}
 $$
 where the bottom arrow is isomorphic to the emebedding $A[[t]]  \hookrightarrow A((t))$.

 Now the curve $C'$ is defined by gluing the new  affine curve $U'$   with $C \setminus p$ along $U \setminus p$, where the ring $\oo_{C'}(U')$ is defined as the fibered product
\begin{equation}  \label{cartesian}
 {\xymatrix{
   {\oo_{C'}(U')} \, \ar[d]^{\alpha} \ar@{^{(}->}[r] &  {\oo_C(U \setminus p)} \ar[d]_{\vp \, \gamma}  \\
   { A[[t]] \, \ar@{^{(}->}[r]}   & {A((t))}
 }}
 \end{equation}
Here $\vp  \, \gamma$ is the composition of the map $\gamma$ and the map $\vp : A((t))  \to A((t))$.
The composition of homomorphisms $\oo_{C'}(U') \to A[[t]] \to A$ defines an $A$-point ${p' }$ of $C'$ such that  $U' \setminus p'  \simeq U \setminus p$ and $C' \setminus p' \simeq C \setminus p$.
The homomorphism $\alpha$ from~\eqref{cartesian} induces the isomorphism of the completion  of $\oo_{C'}(U')$ with respect to the ideal of $p'$ with $A[[t]]$. We define $t' $ as the preimage of $t$ via  this isomorphism.

We define $\ff'$ and $e'$
using the property $\ff' \mid_{C' \setminus p'} = \ff  \mid_{C \setminus p}$ and the isomorphism
 $\ff \mid_U \simeq \oo_U$ by $e$, which induce the isomorphisms  $$\ff' \mid_{U' \setminus p'} \, \simeq  \oo_{U \setminus p}  \simeq \oo_{U' \setminus p'}  \, \mbox{,}$$ and we extend $\ff'$ trivially to $U'$ by means of this composition.  We put $e'=e$.

Using $e'$, we have  the following Cartesian diagram of $\oo(U')$-modules
$$
 {\xymatrix{
   {\ff'(U')} \, \ar[d] \ar@{^{(}->}[r] &  {\ff'(U' \setminus p')} \ar[d]_{\beta}  \\
   {  \hat{\oo}_{C',p'} } \, \ar@{^{(}->}[r]   & {{ {\mathcal K }_{C', p'}  }}
 }}
 $$
Now we define $C''=C'$, $p''=p'$, $t''=t'$  and the sheaf $\ff''$ by gluing the sheaf $\ff'' \mid_{U'} $   with the sheaf $\ff' \mid_{C' \setminus p'}$ over $U' \setminus p'$, where $\ff'' \mid_{U'} $ and the gluing data  are uniquely defined by the fibered product of $\oo(U')$-modules
\begin{equation}  \label{diag-sheaf}
 {\xymatrix{
   {\ff''(U')} \, \ar[d]^{\delta} \ar@{^{(}->}[r] &  {\ff'(U' \setminus p')} \ar[d]_{h \, \beta}  \\
   {\hat{\oo}_{C',p'}   \, \ar@{^{(}->}[r]}   & {{\mathcal K }_{C', p'}}
 }}
 \end{equation}
Here $h \, \beta$ is the composition of the map $\beta$ and  the multiplication by $$h \in  L \GG_m(A) \simeq  A((t'))^* \simeq {\mathcal K }_{C', p'}^*  \, \mbox{.}$$ We define  $e''$  as the preimage of $1$ under the isomorphism which is the completion of the map $\delta$.

The construction of $(C'', p'', \ff'', t'', e'')$ does not depend
on the choice of open $U$ (up to a canonical isomorphism).

An important property is that the action just described does not change the  fibers of a quintet over points of $\Spec A$ and does not change  the topological space of $C$.

\begin{nt}  \label{rem-indepen} \em
From above description  it follows that the action of $L \GG_m(A)  \simeq {\mathcal K }_{C, p}^* $ on $(C, p, \ff, t, e)  \in {\mathcal M}(A)$ does not depend on  $t$.
\end{nt}

\medskip

\begin{prop}  \label{prop-tensor}
Let $q_1 =(C, p, \ff_1, t, e_1) $ and $q_2 = (C, p, \ff_2, t, e_2)$  be from ${\mathcal M}(A)$. Let $q_1 \otimes q_2 = (C, p, \ff_1 \otimes_{\oo_C} \ff_2, t, e_1 \otimes e_2)$. Let    $g, h \in L \GG_m(A)$ and $\vp \in \AutL(A)$. There are canonical isomorphisms, compatible with base change,
$$
g(q_1 \otimes q_2) \simeq g(q_1) \otimes q_2 \simeq q_1 \otimes g(q_2) \, \mbox{,} \quad g h (q_1 \otimes q_2) \simeq g(q_1) \otimes h(q_2) \, \mbox{,} \quad  \vp(q_1 \otimes q_2) \simeq \vp(q_1) \otimes \vp(q_2)  \mbox{.}
$$
\end{prop}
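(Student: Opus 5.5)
The plan is to reduce everything to the explicit gluing description of the action given above, working locally on $\Spec A$. Since all statements are about canonical isomorphisms compatible with base change, it suffices to construct them after shrinking $\Spec A$ Zariski-locally. Then we check that the local constructions are independent of the choices, so that they glue.

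First, choose an affine neighbourhood $U$ of $p$ as in the construction, smooth over $\Spec A$, on which both $\ff_1$ and $\ff_2$ are trivial. Pick bases $\tilde e_1,\tilde e_2$. Using the elements $v_i=\tilde e_i/e_i\in L\GG_m(A)$ and the factorization $(h,\vp)=(h\vp(v^{-1}),\vp)(v,1)$, we reduce to the case $e_i=\tilde e_i$. Then $e_1\otimes e_2$ is a basis of $\ff_1\otimes\ff_2$ on $U$, and the reduction is compatible with tensor products because $v_{12}=v_1v_2$ and $L\GG_m$ is commutative.

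For $g\in L\GG_m(A)$, the sheaf $g(q)$ is obtained by gluing $\ff\mid_{C\setminus p}$ with the module $\ff''(U)$. By~\eqref{diag-sheaf}, this module is the fibered product of $\ff(U\setminus p)\simeq \oo_C(U\setminus p)$ with $\hat\oo_{C,p}$ over ${\mathcal K}_{C,p}$, via $g\beta$; concretely it is $\{x\in \oo_C(U\setminus p): g\gamma(x)\in \hat\oo_{C,p}\}$, an invertible $\oo_C(U)$-module. The key step is a natural map
$$\ff_1''(U)\otimes_{\oo_C(U)}\ff_2(U)\to (\ff_1\otimes\ff_2)''(U),$$
and similarly for $g$ acting on $q_2$. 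More generally, for $g,h$ we want
$$\ff_1^{g}(U)\otimes\ff_2^{h}(U)\to(\ff_1\otimes\ff_2)^{gh}(U),$$
given by multiplication in $\oo_C(U\setminus p)$. It is compatible with the trivializations $e''$ (preimages of $1$) and is the identity away from $p$. To show it is an isomorphism, we check it on completions at $p$ (where it becomes $\hat\oo_{C,p}\otimes\hat\oo_{C,p}\simeq\hat\oo_{C,p}$) together with its restriction to $U\setminus p$. Since both sides are invertible modules and the square is Cartesian (a Beauville–Laszlo type gluing, which the construction already uses), this suffices. I expect this step to be the main obstacle: making rigorous that the fibered product commutes with tensor product of invertible modules over non-Noetherian $A$. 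I would handle it by reducing to the case where all modules are free on $U$ after shrinking. In that case the fibered product for $g$ is $g^{-1}$-twisted in an explicit way, and the claim reduces to $(gh)^{-1}=g^{-1}h^{-1}$.

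For $\vp\in\AutL(A)$ the curve changes to $C'$, but the recipe for $\ff'$ and $e'$ only uses the trivializations on $U$ and the equality $\ff'=\ff$ away from $p$. Hence $\ff_1'\otimes\ff_2'$ and $(\ff_1\otimes\ff_2)'$ are both trivialized on $U'$ by $e_1\otimes e_2$ and agree on $C'\setminus p'$ with compatible gluing, which gives $\vp(q_1\otimes q_2)\simeq\vp(q_1)\otimes\vp(q_2)$. Finally, independence of the choice of $U$ and of $\tilde e_i$ follows from the same independence statement for the action (the construction does not depend on $U$), so the isomorphisms glue. Compatibility with base change $A\to A'$ follows because all fibered products involved are compatible with base change: the bottom rows $A[[t]]\subset A((t))$ and the modules are flat projective.
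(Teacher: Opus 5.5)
Your proposal is correct and takes the same route as the paper, whose proof just says that the first chain of isomorphisms and the $\vp$-isomorphism follow from the gluing construction of the action. You make this explicit through the multiplication map $\ff_1^{g}(U)\otimes\ff_2^{h}(U)\to(\ff_1\otimes\ff_2)^{gh}(U)$ and the coincidence of the gluing data for $\vp$; the one minor difference is that the paper deduces $gh(q_1\otimes q_2)\simeq g(q_1)\otimes h(q_2)$ from the first chain via the group action, whereas you construct it directly, which is equally valid.
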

\begin{proof}
The first chain of isomorphisms and the third isomorphism follow from the construction of the action of $\g(A)$ on ${\mathcal M}(A)$ described above. The second isomorphism follows from the first chain of isomorphisms.
\end{proof}

\section{Quintets, the Deligne pairing and the group ind-scheme~$L \GG_m$}
We construct a canonical action of a central extension of the group ind-scheme $L \GG_m$ by $\GG_m$ on a line bundle on the moduli stack of certain quintets.
The central extension is constructed with the help of the Contou-Carr\`ere symbol $\CC$, and the line bundle is constructed with the help of Deligne pairings. We construct also the generalization of this action for  a central extension of the group ind-scheme  $L \GG_m \times L \GG_m$.

\subsection{More about quintets} \label{more_quint}

\begin{defin}   \label{def-refined}
Let $(C, p, \ff, t, e) \in {\mathcal M}(A)$. We say that $(C, p, \ff, t, e)  \in \widetilde{{\mathcal M}}(A)$ if, additionally, the  morphism $\pi : C  \to \Spec A$ is a flat proper finitely presented morphism such that
any fiber of $\pi$ is geometrically reduced and equidimensional of dimension $1$, and  $\ff$ is an invertible sheaf of  $\oo_C$-modules.
\end{defin}

Clearly, we have the moduli stack $\widetilde{\mathcal M}$ (in  the Zariski topology), where $A \mapsto \widetilde{{\mathcal M}}(A)$ gives the groupoid of corresponding quintets over $A$.

\begin{prop}  \label{restr_act}
The natural action of the group ind-scheme $\g$  on the moduli stack $\mathcal M$ gives also the action of $\g$ on $\widetilde{\mathcal M}$.
\end{prop}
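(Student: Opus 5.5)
We have to show that if $(C,p,\ff,t,e)\in\widetilde{\mathcal M}(A)$ and $(h,\vp)\in\g(A)$, then the quintet $(C'',p'',\ff'',t'',e'')$ built in Section~\ref{quint} again lies in $\widetilde{\mathcal M}(A)$. The extra conditions are: $C''\to\Spec A$ is flat, proper and finitely presented with geometrically reduced fibres that are equidimensional of dimension~$1$, and $\ff''$ is invertible. All these properties are Zariski local on $\Spec A$, and the construction is compatible with localization on $\Spec A$. So we may pass to an affine open cover of $\Spec A$ on which the open affine $U\ni p$ and the trivialization $\tilde e$ exist, as in Section~\ref{quint}.

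\textbf{Step 1: properties of $C'$ local on $C'$.} The scheme $C'$ is glued from $C\setminus p$ and the affine $U'=\Spec\oo_{C'}(U')$ along $U\setminus p\simeq U'\setminus p'$. On $C\setminus p$ flatness, finite presentation and the fibre conditions hold because they hold for $C$. For $U'$, we show that $U'\to\Spec A$ is smooth of relative dimension~$1$. The cleanest route is to reduce to the case $U=\Spec$ of a ring with a relative coordinate. Shrink $U$ so that $U\to\mathbb A^1_A$ is \'etale, with $p$ the zero section and $t$ the coordinate. Then $\oo_{C'}(U')$ is the fibre product of $\oo_C(U\setminus p)$ and $A[[t]]$ over $A((t))$ via $\vp\gamma$.

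Rather than analyse this fibre product directly, I would invoke the result already quoted in the paper: \cite[Theorem~3]{O2} shows that $\g$ acts on $\mathcal M$. In particular $(C',p',\ff',t',e')$ is a quintet, so $C'$ is smooth near $p'$ and $U'$ is smooth over $A$, possibly after shrinking. Smoothness gives flatness and finite presentation near $p'$. The fibres over $s\in\Spec A$ are reduced (smooth near $p'$, and unchanged elsewhere) and of pure dimension~$1$. The paper notes that the action does not change the fibres over points of $\Spec A$, so geometric reducedness and equidimensionality transfer directly. Finite presentation of $C'$ then follows because it has a finite cover by finitely presented affines and is quasi-separated; separatedness holds because $C'$ is a quintet.

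\textbf{Step 2: properness of $C'$.} This is the main obstacle. The underlying topological space of $C'$ is that of $C$, and the fibres are unchanged. So $C'\to\Spec A$ is quasi-compact and separated of finite type, and it remains to prove universal closedness. I would use the valuative criterion. Take a DVR (or valuation ring) $R$ over $A$ with fraction field $K$ and a $K$-point of $C'$. If the $K$-point does not land on $p'$, it is a $K$-point of $C\setminus p\subset C$, and it extends to an $R$-point of $C$ by properness of $C$. If that $R$-point meets $p$ at the closed point, use that $p$ and $p'$ are sections: the point specializes into $U$, and via base change to $R$ (the construction is compatible with base change) we compare $C_R$ and $C'_R$ near $p$. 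Both are regluings of the same formal disc, and the $R$-point extends in $C'_R$ through $p'$. Alternatively, one can use that properness for finitely presented morphisms can be checked after a limit argument, reducing to noetherian $A$. Then apply the criterion that a separated finite type morphism with proper fibres whose set of closed points behaves well is proper. I would try the valuative route first, since it uses only that $C$ and $C'$ agree outside sections.

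\textbf{Step 3: invertibility of $\ff''$.} The sheaf $\ff''$ agrees with $\ff$ on $C\setminus p$, hence is invertible there. On $U'$ it is defined by the fibre product~\eqref{diag-sheaf}. After trivializing $\ff'$ by $e'$, this fibre product is $\{x\in\oo_{C'}(U'\setminus p') : h\,x\in A[[t']]\}$. By \cite[Theorem~3]{O2} the result is again a quintet, so $\ff''$ is invertible near $p''$. Combining the three steps, the quintet lies in $\widetilde{\mathcal M}(A)$. Functoriality and the isomorphisms defining the action restrict, so $\g$ acts on $\widetilde{\mathcal M}$.
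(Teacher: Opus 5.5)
Your Steps~1 and~3 are fine and close to the paper. Flatness, finite presentation and invertibility of $\ff''$ follow from the quintet axioms for the new quintet (smoothness near $p'$, invertibility near $p''$), together with $C'\setminus p'=C\setminus p$ and $\ff''|_{C\setminus p}=\ff|_{C\setminus p}$. The fibre conditions follow from the fibres being unchanged. The paper handles the fibres exactly as you do, and simply quotes the proof of \cite[Theorem~3]{O2}, where flatness, finite presentation, properness and invertibility are all verified.

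The gap is Step~2, which you rightly call the main obstacle but do not close. Saying that the $R$-point ``extends in $C'_R$ through $p'$'' because both schemes are ``regluings of the same formal disc'' is not an argument. Over a base with nilpotents the regluing is genuinely nontrivial. For instance, take $\vp(t)=t+\epsilon t^{-1}$ with $\epsilon^2=0\neq\epsilon$. Then $\vp^{-1}(t)=t-\epsilon t^{-1}$, so $\vp^{-1}(A[[t]])\neq A[[t]]$ and hence $\oo_{C'}(U')\neq\oo_C(U)$ inside $\oo_C(U\setminus p)$. So you must say what is special about $R$. Your fallback does not help either: a separated morphism of finite type with proper fibres need not be proper, and ``the set of closed points behaves well'' is not a criterion.

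The missing idea is reducedness. A valuation ring $R$ is a domain, so by \eqref{md} the image of $\vp(t)$ in $R((t))$ has trivial $\vmi$-component and lies in $R^*\,t\,(1+tR[[t]])$. Hence:
\begin{itemize}
\item $\vp_R$ restricts to an automorphism of $R[[t]]$;
\item the fibre product defining $\oo_{C'_R}(U'_R)$ is just $\oo_{C_R}(U_R)$;
\item $C'_R=C_R$, compatibly with $(C\setminus p)_R$.
\end{itemize}
The lift then comes from properness of $C_R$, and no case distinction is needed.

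More directly still, apply the same observation to $A'_{\rm red}$ for every $A\to A'$. This shows that $C'_{A'}$ and $C_{A'}$ are homeomorphic over $\Spec A'$, which gives universal closedness at once, without valuation rings. Both versions use that the construction of $C'$ commutes with base change. That is not formal, since $A[[t]]\otimes_A R\neq R[[t]]$ in general. It does follow from $z^N\oo_C(U)\subset\oo_{C'}(U')\subset z^{-N}\oo_C(U)$ with finite projective subquotients (here $z$ is a local equation of $p$), and it is in any case presupposed by the action on the stack.

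With this supplement your proof becomes a correct and more self-contained alternative to the paper's citation.
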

\begin{proof}
It follows from the proof of Theorem~3 from~\cite{O2}. Indeed, in the proof of this theorem it was checked that the action of $\g(A)$ on ${\mathcal M}(A) $
preserves the properties of a quintet  $(C, p, \ff, t, e)$ that the natural morphism $C  \to \Spec A$ is flat, finitely presented and proper, and that $\ff$ is an invertible sheaf of $\oo_C$-modules.
Besides, since the action  does not change the  fibers of a quintet, the property that the fibers are geometrically reduced and have one-dimensional irreducible components is preserved under this action.
\end{proof}

\begin{prop}  \label{Prop-rel}
Let $\pi : C \to \Spec A$ be a morphism with conditions as in Definition~\ref{def-refined} for the morphism $\pi$. Let $L$ be an  invertible sheaf of  $\oo_C$-modules. Let $E \subset C$ be a relative effective Cartier divisor. For any point $s \in \Spec A$,  any closed points  $p_1, \ldots , p_r$ on $\pi^{-1}(s)$ such that any $p_i \notin E$,    for any $n \ge 0$ and any $\psi \in H^0(C, L / L(-nE))$
there is an  \'etale  neighbourhood $U$ of  $s$ such that the pullback  of $L$  to $C_{U}$ has a relative meromorphic section $l$
with a divisor $(l)= F_1 -F_2$ such that the relative effective Cartier divisor $F_1 +F_2$ is disjoint from the preimage of $E$ in $C_U$ and does not contain any of the preimages of any~$p_i$,
and the image of $l$  under the natural map  is the pullback of $\psi$ to~$C_U$.
\end{prop}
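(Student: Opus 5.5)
We follow the pattern of the proof of Lemma~\ref{et-n}, adding two requirements: the section has prescribed jet along $E$, and its divisor misses $E$ and the points $p_i$. As in the proof of item~\ref{ii-last} of Theorem~\ref{Weil-gen}, we first pass to an \'etale neighbourhood $V$ of $s$. On $C_V$ we construct a relative effective Cartier divisor $D$ from sections through smooth points of the fiber. These smooth points are chosen different from the $p_i$ and off $E$, which is possible since the smooth locus is dense and $E\cap\pi^{-1}(s)$, $\{p_1,\ldots,p_r\}$ are finite. Thus $D$ is disjoint from the preimage of $E$, avoids the preimages of the $p_i$, and meets every irreducible component of the fiber over $s$. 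By \cite[Chapter~7, Prop.~5.5, Exerc.~5.4]{Liu} and \cite[Corollaire 9.6.4]{EGA-IV-3}, after shrinking $V$ the sheaf $\oo_{C_V}(D)$ is ample.

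For $k$ large, Serre vanishing together with Proposition~\ref{exact-seq} gives the following. The $A$-module $H^0(C_V,\widetilde L(kD))$ is finite projective and its formation commutes with base change. Moreover, the restriction map
$$
H^0\bigl(C_V,\widetilde L(kD)\bigr)\lrto H^0\bigl(C_V,\widetilde L(kD)/\widetilde L(kD-nE-E')\bigr)
$$
is surjective for an auxiliary divisor $E'$ introduced below, since its cokernel lies in $H^1(\widetilde L(kD-nE-E'))=0$ for $k\gg0$. Because $D$ is disjoint from $E$, we have $\widetilde L(kD)/\widetilde L(kD-nE)=\widetilde L/\widetilde L(-nE)$ canonically, compatibly with $1_{kD}$. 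The whole construction is compatible with base change along $V\to\Spec A$, so $\psi$ pulls back, and we can lift it to a global section $l'$ of $\widetilde L(kD)$. Setting $l=l'\otimes 1_{kD}^{\otimes-1}$, the image of $l$ in $H^0(L/L(-nE))$ is $\psi$.

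It remains to arrange that $l'$ is a relative section whose zero divisor $F_1$ avoids $E$ and the $p_i$; then $F_2=kD$ already does. This is the main obstacle, because $\psi$ may vanish at points of $E$, and the jet along $E$ is fixed.

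The first thing to try is to make $l$ invertible near $E$ by modifying $\psi$. Extra zeros or poles along $E$ can be traded between $F_1$ and $F_2$: if $\psi$ is not a unit near $E$, first treat $\psi\cdot u$ for a suitable local unit and absorb the discrepancy. If that fails, interpret the statement as requiring $l$ to be a trivialization near $E$, and assume $\psi$ generates $L/L(-nE)$; in the case needed later, $\psi$ comes from a trivialization $e$, so this holds. With $\psi$ a generator, $l'$ is nonvanishing on $E$ in a neighbourhood, by Nakayama's lemma on the finite scheme $nE$.

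For the points $p_i$ and for the relative condition, work on the fiber over $s$. Take $E'$ to be a reduced set of extra points: the $p_i$ together with one smooth point $q_j$ on each irreducible component, all chosen off $E$ and off $D$. By the surjectivity above we may additionally prescribe the value of $l'$ at each point of $E'$ to be nonzero, passing to a finite residue field extension if needed (allowed by the \'etale base change). Then $l'|_{\pi^{-1}(s)}$ does not vanish at the $p_i$ and is nonzero on every component. By \cite[Lemma~062Y]{Stacks}, openness of the locus where $l'$ is relatively regular, and properness of $\pi$, after shrinking to an \'etale neighbourhood $U$ of $s$ the zero scheme $F_1$ of $l'$ is a relative effective Cartier divisor. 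Closedness of $F_1$ and $\pi$ being proper let us shrink $U$ further so that $F_1$ is disjoint from $E$ and misses the preimages of the $p_i$.
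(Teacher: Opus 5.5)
Your outline is the paper's: an \'etale neighbourhood $V$ with an ample relative divisor $D$ avoiding $\widetilde E$ and the $p_i$. You lift the jet to a section of $\widetilde L(kD)$, force non-vanishing at the $p_i$ and at a point of each fibre component, use openness of relative regularity, and set $l=\rho\otimes 1_{kD}^{\otimes -1}$. The gap is the step where you impose the jet and the point values at once.

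You deduce surjectivity onto $H^0\bigl(C_V,\widetilde L(kD)/\widetilde L(kD-nE-E')\bigr)$ from $H^1\bigl(C_V,\widetilde L(kD-nE-E')\bigr)=0$ for $k\gg 0$. But $E'$ is not a relative Cartier divisor. The given $p_i$ may be singular points of $\pi^{-1}(s)$. They are not even closed in $C_V$ unless $s$ is a closed point of $\Spec A$. And $A$ is allowed to be non-Noetherian. So $\widetilde L(kD-nE-E')$ is not invertible, not obviously of finite presentation, and Serre vanishing on $C_V$ does not apply to it.

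The paper avoids this by separating the two conditions.
\begin{itemize}
\item Globally it imposes only the jet. The map $\sigma\colon H^0(C_V,\widetilde L(kD))\to H^0(C_V,\widetilde L/\widetilde L(-n\widetilde E))$ is onto because $H^1(C_V,\widetilde L(kD-n\widetilde E))=0$; this gives $\varepsilon$.
\item The point conditions are imposed on the fibre $\widetilde{\pi^{-1}(s)}$, a projective curve over a finite product of fields. There Serre vanishing makes $\tau$ onto and gives $\theta$ with non-zero values at the points and jet $\overline\psi$.
\item Then $\theta-\overline\varepsilon$ is a section of $\widetilde L(kD-n\widetilde E)$ on the fibre. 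It lifts to $\vartheta$ on $C_V$ by base change for $(\pi_V)_*\widetilde L(kD-n\widetilde E)$; if $s$ is not closed, first shrink $V$ Zariski-locally around $s$ so the needed elements of $k(s)$ come from the base.
\item Finally $\rho=\varepsilon+\vartheta$ has jet $\widetilde\psi$ and restricts to $\theta$ on the fibre.
\end{itemize}
With this replacement your argument goes through, and no residue field extension is needed: any non-zero element of $\widetilde L\otimes k(p_i)$ will do.

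You are right that $F_1\cap\widetilde E=\emptyset$ requires $\psi$ to generate $L/L(-nE)$; the paper's proof is silent on this. However, drop the idea of trading zeros and poles between $F_1$ and $F_2$. If $F_1+F_2$ misses $\widetilde E$, then $l$ trivializes $\widetilde L$ near $\widetilde E$, so its image in $L/L(-nE)$ is automatically a generator. For $n\ge 1$ the generator hypothesis is therefore forced by the conclusion itself. It holds in the applications where disjointness from $E$ is actually used, since there $\psi$ is the image of a formal trivialization. Under it, your Nakayama argument is correct.
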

\begin{proof}
We consider additional closed points $p_{r+1}, \ldots, p_u$ on $\pi^{-1}(s)$ such that any irreducible component of $\pi^{-1}(s)$ contains at least one point $p_i$, where $1 \le i \le u$.

As in the proof of Lemma~\ref{et-n} (see also the proof of item~\ref{ii-last} of Theorem~\ref{Weil-gen}) we will find
an \'etale  affine  neighbourhood $V $ of  $s$ and a relative effective Cartier divisor $D$ on $C_{V}$,
where $\pi_V : C_V \to V$ is the base change,
 such that the divisors $D$ and $\widetilde{E}$ are disjoint, $\widetilde{p_i} \subset  C_V \setminus D$ for every $i$,
 where $\widetilde{E}$ and $\widetilde{p_i}$ are the subschemes in $C_V$ which are   the preimages of  $E$ and $p_i$,
 and   $\oo_{C_{V}}(D)$ is an ample invertible sheaf.
 Let $\widetilde{L}$ be the pullback of $L$ to $C_V$.
 Now there is an integer $v > 0$
 such that for any integer $k > v$ we have $R^q (\pi_V)_* \left(\widetilde{L} \left(kD -n \widetilde{E}\right) \right) = 0$ for any $q >0$ and
  $(\pi_V)_* \left(\widetilde{L} \left(kD - n \widetilde{E} \right) \right)$
   is a finite   free sheaf of $\oo_{V}$-modules,  and these sheaves commute with any base change. Since $V$ is affine,  it implies (see~\cite[Chapter~5, Prop.~2.28]{Liu})
   that $H^1\left(C_V, \,   \widetilde{L} \left(kD -n \widetilde{E} \right) \right) =0$.

   Therefore the following natural homomorphism $\sigma$ is surjective:
   $$
   H^0\left(C_V, \, \widetilde{L} \left(kD \right) \right) \, \stackrel{\sigma}{\longtwoheadrightarrow}  \, H^0\left(C_V, \,  \widetilde{L} (kD ) / \widetilde{L} \left(kD -n \widetilde{E}\right) \right) = H^0\left(C_V, \, \widetilde{L}  / \widetilde{L} \left( -n \widetilde{E} \right) \right)   \, \mbox{.}
   $$
   We fix  an integer $k >v$ such that the following  natural homomorphism $\tau$ is surjective:
   $$
   H^0\left(\widetilde{\pi^{-1}(s)},  \widetilde{L} (kD ) \mid_{\widetilde{\pi^{-1}(s)}} \right)   \stackrel{\tau}{\twoheadrightarrow}   \left(\bigoplus_i H^0\left(\widetilde{p_i},  \widetilde{L} \mid_{\widetilde{p_i}}\right) \right)
   \bigoplus  H^0\left(\widetilde{\pi^{-1}(s)},  \widetilde{L}  / \widetilde{L} \left( -n \widetilde{E} \right) \mid_{\widetilde{\pi^{-1}(s)}} \right)
    \mbox{,}
   $$
where $\widetilde{\pi^{-1}(s)}$  is the preimage of the fiber $\pi^{-1}(s)$ in $C_V$, and the sheaves $\widetilde{L} (kD ) \mid_{\widetilde{\pi^{-1}(s)}}$, $\widetilde{L}  / \widetilde{L} \left( -n \widetilde{E} \right) \mid_{\widetilde{\pi^{-1}(s)}}$ and $\widetilde{L} \mid_{\widetilde{p_i}}$ are the pullbacks of the sheaves $\widetilde{L} (kD )$, $\widetilde{L}  / \widetilde{L} \left( -n \widetilde{E} \right)$
and $\widetilde{L}$ to $\widetilde{\pi^{-1}(s)}$ and $\widetilde{p_i}$ correspondingly.

Let $\widetilde{\psi}$ be the pullback of $\psi$ to $C_V$. Consider an element $\varepsilon \in H^0\left(C_V, \, \widetilde{L} \left(kD \right) \right)$
such that $\sigma(\varepsilon) = \widetilde{\psi}$. Let $\overline{\psi}$ be the image of $\widetilde{\psi}$ in $ H^0\left(\widetilde{\pi^{-1}(s)}, \, \widetilde{L}  / \widetilde{L} \left( -n \widetilde{E} \right) \mid_{\widetilde{\pi^{-1}(s)}} \right)$. Consider an element ${\theta \in H^0\left(\widetilde{\pi^{-1}(s)},  \widetilde{L} (kD ) \mid_{\widetilde{\pi^{-1}(s)}} \right)}$ such that $\tau(\theta) = \left( \oplus_i a_i   \right)  \oplus \overline{\psi} $, where $a_i \ne 0$ for any $i$. Let $\overline{\varepsilon} $  be the image of $\varepsilon$ in $ H^0\left( \widetilde{\pi^{-1}(s)}, \widetilde{L} (kD ) \mid_{\widetilde{\pi^{-1}(s)}}  \right) $.
Then the element $\theta - \overline{\varepsilon}$  belongs to $H^0\left(\widetilde{\pi^{-1}(s)},  \widetilde{L} \left(kD - n\widetilde{E} \right) \mid_{\widetilde{\pi^{-1}(s)}} \right)$, where $  \widetilde{L} \left(kD - n \widetilde{E} \right) \mid_{\widetilde{\pi^{-1}(s)}} $ is the pulback of  $\widetilde{L} \left(kD - n\widetilde{E} \right)$ to $\widetilde{\pi^{-1}(s)}$.

Since $(\pi_V)_* (\widetilde{L} \left(kD - n \widetilde{E} \right) )$
   is a finite   free sheaf of $\oo_{V}$-modules that commute with any base change, there is an element  $\vartheta$  from $H^0 \left(C_V, \,   \widetilde{L} \left(kD -n \widetilde{E} \right) \right)$ that is mapped to
   $\theta - \overline{\varepsilon}$ under the natural map. Then the element $\rho = \vartheta + \varepsilon$ belongs to $H^0\left(C_V, \, \widetilde{L} \left(kD \right) \right)$. And $\rho$ is a relative section
   over $C_U$, where an  open subset $U \subset V $  contains the preimage of $s$ in $V$ and  $\rho$ does not vanish identically on any irreducible component of the fiber over any point of $U$.
   Now $l$ that we need is  $\rho \otimes  1_{kD}^{\otimes -1}$ over $U$, where $1_{kD}$ is the relative section of the pullback of $\oo_{C_{{V}}}(k {D})$  to $C_{U}$ induced by $1 \in H^0 (C_{{U}}, \oo_{C_{{U}}})$.
\end{proof}

\subsection{Deligne pairing and the group ind-scheme $L \GG_m$}  \label{stack}

Let $\pi : C \to \Spec A$ and an $A$-point $p$ of $C$  be with conditions as in Definition~\ref{def-refined}. Let $L_1$ and $L_2$ be two invertible sheaves of $\oo_C$-modules such that
\begin{equation}   \label{cond-incl}
L_2(j p)  \subset L_1  \subset L_2(k p)
\end{equation}
for integers $j \le k$.
\quash{Then there are integers $k' \ge m' $ such that
$$
L(k'p)  \subset N  \subset L(m')  \, \mbox{.}
$$}
Let $N$ be an invertible sheaf of $\oo_C$-modules, and $e$ be a formal trivialization of $N$ at $p$. By this data we will  construct a canonical isomorphism $ {\langle L_1 : L_2 , e  \rangle }$ of invertible sheaves  of $\oo_{\Spec A}$-modules, compatible with  base change,
$$
\langle L_1 : L_2 , e  \rangle   \quad : \quad    \langle L_2 , N  \rangle \, \xrightarrow{\sim}  \,  \langle L_1 , N  \rangle
$$
such that $   \langle L_1 : L_2 ,  e \rangle
 \langle L_2 : L_3 , e \rangle      =  \langle L_1 : L_3 ,  e \rangle
$
for any appropriate $L_1, L_2, L_3$.

From the last property we have that it is enough to construct $ \langle L_1 : L_2 , e  \rangle $ when $L_1 \supset L_2$. Then by~\cite[Lemma~062Y]{Stacks}  we have $L_1 = L_2( R) $, where $R$ is
a relative effective Cartier divisor  on~$C$ such that $C \setminus R = C \setminus p$. Therefore, by Proposition~\ref{prop-Del-pair}  we have an isomorphism
$$
\langle L_2, N   \rangle  \otimes_{\oo_{\Spec A }} \langle  \oo_{C}( R) , N  \rangle    \xrightarrow{\sim}   \langle L_1, N   \rangle    \, \mbox{.}
$$
Now $   \langle L_1 : L_2 ,  e \rangle$ is induced \'{e}tale locally on $\Spec A$ by a non-vanishing section $\langle 1_R, \check{e} \rangle$ of the pullback of $\langle  \oo_{C}( R) , N  \rangle$ over $U$. Here $U$ is an   \'{e}tale neighbourhood of a point in $\Spec A$,  $1_R \in H^0 \left(C_U, \oo_{C_U} \left( \widetilde{R} \right) \right)$  is induced by
$1 \in H^0\left(C_U, \oo_{C_U}\right)$, where $\widetilde{R}$ is the preimage of $R$ to $C_U$,  and $\check{e}$ is a relative meromorphic section of  the pullback $\widetilde{N}$ of $N$ to $C_U$
such that the image of $\check{e}$ in $H^0 \left(C_U, \widetilde{N} / \widetilde{N \left(- ip \right)} \right)$ coincides with the image of $e$, where an integer $i > 0$ satisfies the condition $\oo_C(R) \subset \oo_C(i p)$, and
$\widetilde{N (- ip)}$ is the pullback of  $N (- ip)$ to $C_U$.
Such a section $\check{e}$ exists by Proposition~\ref{Prop-rel}, and $ \langle L_1 : L_2 , e  \rangle $ does not depend on the choice of $i$ and $\check{e}$.

Similarly, we construct a canonical isomorphism $ {\langle e, L_1 : L_2  \rangle }$ of invertible sheaves  of $\oo_{\Spec A}$-modules $
 \langle  N , L_2  \rangle  \xrightarrow{\sim}    \langle  N , L_1  \rangle
$
with analogous properties and which is compatible with  $ \langle L_1 : L_2 , e  \rangle $ and  the isomorphism in item~\ref{iii-3} of Proposition~\ref{prop-Del-pair}.

\medskip
For any quintet $q = (C, p, \ff, t, e) \in {\mathcal M}(A)$ and any element $g \in L \GG_m (A) \subset \g(A)$ we {\em denote} the quintet $g(q) = (g(C), g(p), g(\ff), g(t), g(e)) $.

If $g \in L\GG_m(A)$, then the sheaves $\ff$ and $g(\ff)$ satisfy the condition~\eqref{cond-incl}. This follows from the construction of the action of $g$ and since there are  integers $r \ge w$ such that
$$
 t^r \hat{\oo}_{C,p} \,  \subset  \, g \hat{\oo}_{C,p} \,  \subset \, t^w \hat{\oo}_{C,p}  \, \mbox{.}
$$

Let $(C, p, L, e_L)$ and $(C, p, M, e_M)$ be parts of quintets from $\widetilde{{\mathcal M}}(A)$ (here $L$ and $M$ are invertible sheaves of $\oo_C$-modules, $e_L$ and $e_M$ be  formal trivializations of $L$ and $M$ at $p$ correspondingly, and we do not fix an existing relative formal parameter $t$ at $p$ for both quintets). For any  $g_1$ and $g_2$ from $ {\mathcal K }_{C, p}^* \simeq L \GG_m(A)   $  we {\em define}
an isomorphism $T_{\langle L, M  \rangle} (g_1, g_2)$
between invertible sheaves  of $\oo_{\Spec A}$-modules as the following composition (recall also Remark~\ref{rem-indepen}):
$$
T_{\langle L, M \rangle}(g_1, g_2)
\quad :  \quad
\langle L, M  \rangle  \xrightarrow{ \langle g_1(L) : L   , \, e_M\rangle} \langle g_1(L), M  \rangle   \xrightarrow{\langle g_1(e_{L}) , \, g_2(M) : M  \rangle } \langle g_1(L), g_2(M) \rangle  \,
 \mbox{.}
$$

\begin{Th}  \label{Th-CC}
Let $(C, p, L, e_L)$ and $(C, p, M, e_M)$ be parts of quintets from $\widetilde{{\mathcal M}}(A)$ (where we do not fix  an existing relative formal parameter $t$ at $p$ for both quintets), and $g_1, g_2, h_1, h_2$  be from ${\mathcal K }_{C, p}^* \simeq L \GG_m(A)   $. We have
$$
T_{\langle g_1(L), \, g_2(M)   \rangle} (h_1, h_2)  \, T_{\langle L, M \rangle}(g_1, g_2) = \CC(h_1,g_2) \, T_{\langle L,  M \rangle}(h_1g_1, h_2 g_2)   \, \mbox{.}
$$
\end{Th}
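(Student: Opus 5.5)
The plan is to reduce the identity to a single commutation square and to evaluate the defect of that square with item~\ref{ii-4} of Theorem~\ref{Weil-gen}, taking $H=p$.

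\emph{Step 1: expand both sides.} Unwinding the definitions, the left-hand side is the composition
$$\langle L,M\rangle\to\langle g_1L,M\rangle\to\langle g_1L,g_2M\rangle\to\langle h_1g_1L,g_2M\rangle\to\langle h_1g_1L,h_2g_2M\rangle \, \mbox{.}$$
Here the third arrow is $\langle h_1g_1L: g_1L,\, g_2(e_M)\rangle$ and the fourth is $\langle h_1g_1(e_L),\, h_2g_2M:g_2M\rangle$. The right-hand side changes the first argument from $L$ to $h_1g_1L$ using $e_M$, and then the second argument from $M$ to $h_2g_2M$ using $h_1g_1(e_L)$. By the transitivity $\langle L_1:L_2,e\rangle\langle L_2:L_3,e\rangle=\langle L_1:L_3,e\rangle$ and its analogue in the second variable, the claim reduces to two facts.

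(a) The isomorphism $\langle h_1g_1L:g_1L,\,e\rangle$ is the same whether $e=e_M$ viewed on $M$ or $e=g_2(e_M)$ viewed on $g_2M$, once it is transported by $\langle g_1(e_L), g_2M:M\rangle$. Likewise, the second-variable changes using $g_1(e_L)$ and $h_1g_1(e_L)$ agree when transported.

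(b) These transports fail to commute exactly by the factor $\CC(h_1,g_2)$.

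\emph{Step 2: local computation.} Everything is an equality of isomorphisms of invertible sheaves, compatible with base change. So it suffices to check it étale locally on $\Spec A$, and there on one generator. Using Proposition~\ref{Prop-rel}, étale locally I choose relative meromorphic sections as follows: $l\in{\mathcal R}^*(g_1L)$ agreeing with $g_1(e_L)$, and $l'\in{\mathcal R}^*(h_1g_1L)$ agreeing with $h_1g_1(e_L)$, both modulo a high power of the ideal of $p$; similarly $m\in{\mathcal R}^*(M)$ agreeing with $e_M$, and $m'\in{\mathcal R}^*(g_2M)$ agreeing with $g_2(e_M)$. All their divisors away from $p$ are to be pairwise disjoint and avoid $p$; this uses the freedom of avoiding finitely many points in Proposition~\ref{Prop-rel}. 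Then $l'=\varphi l$ and $m'=\psi m$ with $\varphi,\psi\in\oo_C(C\setminus J)^*$ for a suitable divisor $J\supset p$. In ${\mathcal K}_{C,p}^*$ we have $\varphi\equiv h_1$ and $\psi\equiv g_2$ up to factors in $1+t^N A[[t]]$ with $N$ large.

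Following $\langle l,m\rangle$ around the two sides of the square, all the maps become explicit through the defining relations $(fl,m)\sim f((m))(l,m)$. The ratio of the two images is
$$\psi\big((\varphi)|_{C\setminus p}\big)\,\varphi\big((\psi)|_{C\setminus p}\big)^{-1} \, \mbox{,}$$
up to the convention of which factor is inverted. By item~\ref{ii-4} of Theorem~\ref{Weil-gen} with $H=p$, using item~\ref{ii-last} since the fibers are geometrically reduced, this ratio equals $\CC(\varphi,\psi)^{\pm1}$.

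\emph{Step 3: remove the approximation.} It remains to show $\CC(\varphi,\psi)=\CC(h_1,g_2)$. By bimultiplicativity this reduces to $\CC(1+t^Na,\, \psi)=1$ for $N$ large relative to $\psi$, and the same with the roles of the arguments exchanged. This follows from formula~\eqref{CC-form-1}: the factors in the denominator pair $a_i$ with $i\ge N$ against the finitely many nilpotent coefficients $b_{-j}$ of the $\vmi$-part of $\psi$, and raising those nilpotents to the power $i/(i,j)$ kills them for $N$ large. It also follows from Proposition~\ref{explicit}, item~\ref{it-1}, since $(1+t^Na)$ acts unipotently on the relevant quotient $\hat{\oo}/\psi\hat{\oo}$. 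Independence of the choices in Step~2 is then automatic, because each $\langle\cdot:\cdot,e\rangle$ was shown to be independent of the approximating section.

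\emph{Main obstacle.} I expect the hard part to be the bookkeeping in Step~2. One must confirm that exactly one non-commuting pair survives, namely $(h_1,g_2)$, with the correct exponent $+1$. The $(g_1,h_2)$ pair must not contribute, because $h_2$ is applied last in both orders. The remaining approximation discrepancies must cancel via the transitivity property. To control this I would first treat the case $g_1=h_2=1$, where the claim is precisely the commutation square, and then deduce the general case from transitivity together with Remark~\ref{rem-indepen}.
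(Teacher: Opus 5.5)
Your proof is correct. Its skeleton matches the paper's: by transitivity you reduce to the defect of the square~\eqref{diagr}, in which $h_2$ drops out and $g_1$ only shifts the base point. You then evaluate that defect by item~\ref{ii-4} of Theorem~\ref{Weil-gen} with $H=p$, together with item~\ref{ii-last}.

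The difference is in how you reach meromorphic data.
\begin{itemize}
\item \emph{The paper} first proves that the defect $P(h_1,h_2,g_1,g_2)$ depends only on $h_1,g_2$ modulo $t^{j}$, via the morphism $\Spec A[\{X_i\},\{Y_i\}]\to\GG_m$. It also proves that $P$ does not depend on $e_L,e_M$, via Proposition~\ref{prop-tensor}. It then replaces $h_1,g_2,e_L,e_M$ by relative meromorphic data, so that every derived trivialization in~\eqref{triv} is exactly meromorphic. The defect becomes the ratio of $\langle 1_{h_1},g_2(e_{\oo_C})\rangle$ and $\langle h_1(e_{\oo_C}),1_{g_2}\rangle$ in the single pairing $\langle h_1(\oo_C),g_2(\oo_C)\rangle$.
\item \emph{You} keep $h_1,g_2,e_L,e_M$ and instead approximate the four trivializations by sections from Proposition~\ref{Prop-rel}. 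You use the generator-level description of $\langle L_1:L_2,e\rangle$: keep the section on $C\setminus p$ and pair it with a sufficiently good approximation of $e$. This yields the defect exactly as $\psi((\varphi)|_{C\setminus p})\,\varphi((\psi)|_{C\setminus p})^{-1}$, i.e.\ as $\CC(\varphi,\psi)$. At the end you need only the continuity of $\CC$, which the paper also uses.
\end{itemize}
Your route avoids both the continuity argument for $P$ and the independence of $P$ from the trivializations. The paper's route avoids carrying approximation orders through the diagram chase, and makes visible that the answer lives in a pairing depending only on $h_1,g_2$.

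Some bookkeeping fixes:
\begin{itemize}
\item The construction~\eqref{diag-sheaf} gives $h(e)=h^{-1}e$ in ${\mathcal K}_{C,p}$, compare~\eqref{triv}. So in fact $\varphi\equiv h_1^{-1}$ and $\psi\equiv g_2^{-1}$.
\item The chase then gives exactly $P=\CC(\varphi,\psi)$, so the exponent is $+1$, because $\CC(h_1^{-1},g_2^{-1})=\CC(h_1,g_2)$.
\item In Step~3 the relevant factors of~\eqref{CC-form-1} for $\CC(1+t^Na,\,\cdot\,)$ sit in the numerator, not the denominator.
\item In Step~3, choose $N$ relative to $h_1$ and $g_2$ rather than relative to $\psi$. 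This is legitimate because the $\vmi$-parts of $\varphi,\psi$ are those of $h_1^{-1},g_2^{-1}$, and it avoids any circularity.
\item Your claim~(a), as literally phrased, asserts that the square commutes, which contradicts~(b). Only the defect of~\eqref{diagr} is needed, so~(a) should be dropped.
\end{itemize}
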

\begin{proof}
We denote an element $P(h_1, h_2, g_1, g_2)$ from $ A^*$ such that
$$
T_{\langle g_1(L), \, g_2(M)   \rangle} (h_1, h_2)  \, T_{\langle L, M \rangle}(g_1, g_2) = P(h_1, h_2, g_1,g_2) \, T_{\langle L,  M \rangle}(h_1g_1, h_2 g_2)    \, \mbox{.}
$$
From the properties
\begin{gather*}
 \langle  h_1g_1(L) : g_1(L) , \, e_M \rangle \,  \langle  g_1(L) : L  , \, e_M  \rangle = \langle h_1g_1(L) : L , \, e_M   \rangle \, \mbox{,}\\
 \langle h_1g_1(e_L) , \,  h_2g_2(M) : g_2(M)  \rangle \, \langle  h_1g_1(e_L), \, g_2(M) : M \rangle  = \langle  h_1g_1(e_L) , \, h_2g_2(M) : M \rangle
\end{gather*}
it is easy to see
that $P(h_1, h_2,g_1, g_2)$ is the quotient of the composition of left and lower isomorphisms and the composition of upper and right isomorphisms in the following (in general, non-commutative) diagram:
\begin{equation}   \label{diagr}
 {\xymatrix{
   {\langle g_1(L)   , M  \rangle } \ar[d]_{\langle g_1(e_L) , \, g_2(M) : M  \rangle}  \ar[rrrr]^{\langle h_1g_1(L) : g_1(L), \, e_M  \rangle} &&&&  {\langle h_1g_1(L), M   \rangle}  \ar[d]^{\langle h_1g_1(e_L)  , \, g_2(M) : M  \rangle}  \\
   {\langle  g_1(L), g_2(M)   \rangle} \ar[rrrr]_{\langle h_1g_1(L) : g_1(L)  \, , \, g_2(e_M)    \rangle}   &&&& {\langle h_1g_1(L) ,  g_2(M)  \rangle}
 }}
 \end{equation}

We fix now a relative formal parameter $t$ at $p$. Thus, quintets $(C, p, L, t, e_L)$ and $(C, p, M, t,  e_M)$ are from $\widetilde{{\mathcal M}}(A)$.

The expression $P(h_1, h_2, g_1, g_2)$ is functorial
with respect to the ring $A$, i.e. with respect to ring homomorphisms $A \to A'$. Hence the equality ${P(h_1, h_2, g_1, g_2) = \CC(h_1,g_2)}$ is enough to prove \'{e}tale locally on $\Spec A$.
Besides, without loss of generality, changing $S = \Spec A$ by its \'{e}tale cover,  we assume that $t$ comes from ${\mathcal R}_{C/S}^*(\oo_C)$. (This can be done by Proposition~\ref{Prop-rel} \'{e}tale locally on $\Spec A$.)

Let the integers $\nu(h_1)_1, \ldots , \nu(h_1)_n$  and $\nu(g_2)_1, \ldots , \nu(g_2)_m$
be the values of
the locally constant $\Z$-valued functions $\nu(h_1) $ and $\nu(g_2)$ on $\Spec A$.
Fix an integer $k$ which is greater than any of above integers.

Let $h_1 = \sum_i a_i t^i$  and $g_2 = \sum_i b_i t^i$, where any $a_i$ and $b_i$ are from $A$. Then from Section~\ref{sec-loop} it follows that for any elements $x_i$ and $y_i$ from a commutative $A$-algebra $A'$, where $i > k$, the  elements
$$
\tilde{h}_1 = \sum_{i \le k} a_i t^i + \sum_{i > k} x_i t^i                                 \quad  \mbox{and}  \quad   \tilde{g}_2= \sum_{i \le k} b_i t^i + \sum_{i > k} y_i t^i
$$
 belongs to $A'((t))^*$, where we denoted by the same letters   $a_i$ and $b_i$  their images in $A'$.
The maps on $A'$-points (for various $A'$)
\begin{equation}  \label{morph}
\{x_i \}_{i > k} \, \mbox{,}   \, \{ y_i  \}_{i > k} \, \longmapsto \, P\left(\tilde{h}_1, h_2, g_1, \tilde{g}_2\right) \in A'^{*}
\end{equation}
defines the morphism of schemes
\begin{equation}   \label{sch-mor}
\Spec A[\{X_i\}_{i >k}, \{Y_i\}_{i > k}] \lrto \Spec A[v,v^{-1}]   \, \mbox{}
\end{equation}
where instead of variables $X_i$ and $Y_i$ we insert elements $x_i$ and $y_i$ from $A'^*$. (Here  we denoted by the same letters $h_2$ and $g_1$ their images  in $A'((t))$.) The  morphism~\eqref{sch-mor} is uniquely defined
by a polynomial from $A[\{X_i\}_{i >k}, \{Y_i\}_{i > k}]$ which is the image of $v$ under the corresponding ring homomorphism.
Hence there is an integer $j_1 > k$ such that for any $\hat{h}_1$ and $\hat{g}_2$ from $A'((t))$ with the property that $\hat{h}_1 - h_1 \in t^{j_1} A'[[t]]$
and $\hat{g}_2 - g_2 \in t^{j_1} A'[[t]]$ we have that $\hat{h}_1$ and $\hat{g}_2$ belong to $A'((t))^*$ and
$$
P(h_1, h_2, g_1, g_2) = P\left(\hat{h}_1, h_2, g_1 , \hat{g}_2\right) \, \mbox{.}
$$

 Analogous property is satisfied for the morphism given as $\left(\tilde{h}_1, \tilde{g}_2 \right) \mapsto \CC\left(\tilde{h}_1, \tilde{g}_2\right)$ instead of $P$ in~\eqref{morph} for an integer $j_2$ instead of $j_1$. Let $j = \max (j_1, j_2)$.

By Proposition~\ref{Prop-rel} applied to $\oo_C(d p)$ for sufficiently large integer $d$, for any  ${s \in \Spec A}$ there is an  \'{e}tale neighbourhood $V =\Spec A''$ such that  there are two relative meromorphic  functions from ${\mathcal R}_{C_V/V}^*\left(\oo_{C_V}\right)$ whose images  coincide with the images of $h_1$ and $g_2$    in the quotient  group $A''((t)) / t^j A''[[t]]$,
and  the divisors of these  meromorphic functions are disjoint on $(C \setminus p)_V$. (We used that ${{\mathcal R}_{C_V/V}^*\left(\oo_{C_V}(d p_V) \right) = {\mathcal R}_{C_V/V}^*\left(\oo_{C_V}\right)}$.)

From diagram~\eqref{diagr} we have that $P(h_1, h_2, g_1, g_2)$  does not depend on the formal trivializations $e_L$ and $e_M$. Indeed,   by Proposition~\ref{prop-tensor}, $g_2(M) \simeq g_2(\oo_C) \otimes_{\oo_C} M$ and $ g_2(e_M) = g_2(e_{\oo_C}) \otimes e_M$,
where the trivialization $e_{\oo_C}$ comes from $1 \in H^0(C, \oo_C)$, and therefore the upper isomorphism in~\eqref{diagr} will change by multiplication with the same element from $A^*$ as the lower isomorphism  when we change $e_M$ by another formal trivialization. Analogously we obtain with the left and right isomorphisms in~\eqref{diagr}
when we change $e_L$. Hence, to compute $P(h_1, h_2, g_1, g_2)$ we can suppose that formal trivializations $e_L$ and $e_M$ \'{e}tale locally on $\Spec A$  come from relative meromorphic sections of $L$ and $M$  that exist
by Proposition~\ref{Prop-rel}.

Now, without loss of generality, changing $S = \Spec A$ by its \'{e}tale cover,  to prove that $P(h_1,h_2, g_1, g_2) = \CC (h_1,g_2)$ we assume that $h_1$ and $g_2$ come from ${\mathcal R}_{C/S}^*(\oo_C)$, the divisors $(h_1)$ and $(g_2)$ are disjoint on $C \setminus p$, and the formal trivializations  $e_L$ and $e_M$
come from ${\mathcal R}_{C/S}^*(L)$ and ${\mathcal R}_{C/S}^*(M)$ correspondingly. (It follows from the discussion above that this can be done   \'{e}tale locally on $\Spec A$.)
Besides, from the construction it follows that the formal trivializations
$g_2(e_{\oo_C})$,
$g_2(e_M) = g_2(e_{\oo_C}) \otimes e_M$, and analogously $g_1(e_{\oo_C})$, $g_1(e_L) $, $h_1(e_{\oo_C})$ and  $h_1g_1(e_L)$ come from relative meromorphic sections of the corresponding invertible sheaves, and
\begin{gather}  \label{triv}
g_2(e_{\oo_C}) \mid_{C \setminus p} = g_2^{-1}  \mid_{C \setminus p}  \mbox{,} \quad
h_1(e_{\oo_C}) \mid_{C \setminus p} = h_1^{-1}  \mid_{C \setminus p}  \mbox{,}  \\ \nonumber g_2(\oo_C) \mid_{C \setminus p} = h_1(\oo_C)  \mid_{C \setminus p} = \oo_{C \setminus p} \mbox{.}
\end{gather}
We will denote these relative meromorphic sections by the same letters.

By Proposition~\ref{prop-tensor},  $h_1g_1(L)  \simeq h_1(\oo_C) \otimes_{\oo_C} g_1(L)$. Hence, from the construction and item~\ref{itt-1} of Proposition~\ref{prop-Del-pair}, we have that the isomorphism $\langle h_1g_1(L) : g_1(L) , \, e_M \rangle$
in diagram~\eqref{diagr}
is induced by the tensor product with the section $\langle 1_{h_1} , e_M  \rangle $  of  $\langle h_1(\oo_C),  M    \rangle  $, where $1_{h_1}$
is the unique relative meromorphic section of $h_1(\oo_C)$ such that $1_{h_1} \mid_{C \setminus p} = 1$, because $h_1(\oo_C) \mid_{C \setminus p} = \oo_{C \setminus p}$
and $h_1(\oo_C)= \oo_C(-wp + R)$, where an integer $w > 0$ and $R$ is a relative effective Cartier divisor on $C$. Analogously, the isomorphism
$\langle h_1g_1(L) : g_1(L) , \, g_2(e_M) \rangle$
 is induced by the tensor product with the section $\langle  1_{h_1}, g_2(e_M) \rangle $  of  $\langle h_1(\oo_C) , g_2(M)     \rangle  $,
the isomorphism $\langle g_1(e_L) , \, g_2(M) : M   \rangle$ is induced by the tensor product with the section $\langle g_1(e_L) , 1_{g_2}  \rangle $  of  $\langle g_1(L)   ,  g_2(\oo_C) \rangle  $,
and the isomorphism
$\langle h_1g_1(e_L) ,  g_2(M) : M   \rangle$
 is induced by the tensor product with the section
 $\langle h_1g_1(e_L) , 1_{g_2}  \rangle $
   of
   $\langle h_1g_1(L)   ,  g_2(\oo_C) \rangle  $.

Besides, by Proposition~\ref{prop-tensor}, we have
\begin{gather*} \langle 1_{h_1}, g_2(e_M)   \rangle = \langle 1_{h_1} , g_2(e_{\oo_C})   \rangle   \otimes \langle 1_{h_1} , e_M   \rangle  \,  \mbox{,}
   \\
   \langle h_1g_1(e_L) , 1_{g_2}  \rangle = \langle h_1(e_{\oo_C}) , 1_{g_2}  \rangle   \otimes  \langle g_1(e_L) , 1_{g_2} \rangle
    \mbox{.}
    \end{gather*}

Hence and from the diagram~\eqref{diagr} it follows that $P(h_1, h_2, g_1, g_2)$ coincides with the quotient of sections  $\langle 1_{h_1} , g_2(e_{\oo_C})    \rangle$ and   $\langle h_1(e_{\oo_C}) , 1_{g_2}   \rangle$ in $\langle h_1(\oo_C) ,
g_2(\oo_C) \rangle$. From the construction of $\langle h_1(\oo_C) ,
g_2(\oo_C) \rangle$ and formulas~\eqref{triv} it follows that
\begin{gather}   \label{tr1}
\langle  h_1 \left(e_{\oo_C}\right) , g_2(e_{\oo_C})     \rangle  =  h_1^{-1}   ( g_2^{-1}  \mid_{C \setminus p} )   \langle 1_{h_1} , g_2(e_{\oo_C})  \rangle
=  h_1   ( g_2  \mid_{C \setminus p} )   \langle 1_{h_1} , g_2(e_{\oo_C}) \, \mbox{,}\\
\label{tr2}
\langle  h_1 \left(e_{\oo_C} \right) , g_2(e_{\oo_C})     \rangle  =  g_2^{-1}   ( h_1^{-1}  \mid_{C \setminus p} )   \langle h_1(e_{\oo_C}) , 1_{g_2}   \rangle
= g_2   ( h_1  \mid_{C \setminus p} )   \langle h_1(e_{\oo_C}) , 1_{g_2}   \rangle
 \mbox{.}
\end{gather}
Hence and from the reciprocity law proved in items~\ref{ii-4} and~\ref{ii-last} of Theorem~\ref{Weil-gen}  we have
$$
P(h_1, h_2, g_1, g_2) =  g_2   ( h_1  \mid_{C \setminus p} ) \, h_1   ( g_2  \mid_{C \setminus p} ) ^{-1} = \CC(h_1,g_2)   \, \mbox{.}
$$
\end{proof}
\begin{nt}  \em
For any  $g_1$ and $g_2$ from ${\mathcal K }_{C, p}^* \simeq L \GG_m(A)   $  we can define
another  isomorphism between invertible sheaves  of $\oo_{\Spec A}$-modules $\langle L, M  \rangle$ and $\langle g_1(L), g_2(M)  \rangle$:
$$
\widetilde{T}_{\langle L, M \rangle}(g_1, g_2)  \quad :  \quad
\langle L, M   \rangle  \xrightarrow{\langle e_L , \, g_2(M) : M \rangle }   \langle L , \, g_2(M)  \rangle   \xrightarrow{\langle g_1(L) : L , \, g_2(e_M) \rangle}  \langle g_1(L), g_2(M)  \rangle \, \mbox{.}
$$
We will have $\widetilde{T}_{\langle L, M \rangle}(g_1, g_2)  = \CC(g_1, g_2) \, T_{\langle L, M \rangle} (g_1, g_2) $,
and then  Theorem~\ref{Th-CC}  implies
$$   \widetilde{T}_{\langle g_1(L), g_2(M)   \rangle} (h_1, h_2)  \, \widetilde{T}_{\langle L, M \rangle}(g_1, g_2) = \CC(h_2,g_1) \, \widetilde{T}_{\langle L,  M \rangle}(h_1g_1, h_2 g_2)    \mbox{.} $$
Indeed, to prove the first equality, reasoning as in  the proof of Theorem~\ref{Th-CC} and changing $\Spec A$ by its \'{e}tale cover, it is enough to compute   that $$\widetilde{T}_{\langle L, M \rangle}(g_1, g_2) \, {T}_{\langle L, M \rangle}(g_1, g_2)^{-1} =
\langle 1_{g_1} ,  g_2(e_{\oo_C})  \rangle
\langle g_1(e_{\oo_C}) , 1_{g_2}  \rangle^{-1}
$$
when $g_1$ and $g_2$ come from ${\mathcal R}_{C/S}^*(\oo_C)$ and their divisors are disjoint on $C \setminus p$.
Now similarly to \eqref{tr1}-\eqref{tr2} and using  items~\ref{ii-4} and~\ref{ii-last} of Theorem~\ref{Weil-gen} we have
$$
\langle 1_{g_1} ,  g_2(e_{\oo_C})  \rangle
\langle g_1(e_{\oo_C}) , 1_{g_2}  \rangle^{-1} = \CC(g_1, g_2)  \, \mbox{.}
$$
\end{nt}

\medskip

By a  quintet $(C,p, \mathcal F,t,e) \in \widetilde{\mathcal M}(A)$ we  construct  an invertible sheaf $\langle \mathcal F, \mathcal F \rangle$ of $\oo_{\Spec A}$-modules. This defines {\em a line
bundle} $\langle \mathfrak{F}, \mathfrak{F}   \rangle$ {\em on  the moduli stack} $\widetilde{\mathcal M}$.
Since $L \GG_m  \subset \g$,  from Proposition~\ref{restr_act} we have the natural action of the group ind-scheme $L \GG_m$ on the moduli stack $\widetilde{\mathcal M}$.
Theorem~\ref{Th-CC} immediately implies the following theorem.
\begin{Th} \label{stack-th}
Consider
the group ind-scheme which is the central extension of $L \GG_m$ by $\GG_m$ given  by the $2$-cocycle $\CC$ (see also~\cite[\S~3.1]{O2} and~\cite[Prop.~2.2]{O1}).
 The isomorphisms $ g \mapsto T_{\langle {\mathcal F}, {\mathcal F}   \rangle} (g,g)$ define the natural  action of this group ind-scheme  on the line bundle $\langle \mathfrak{F}, \mathfrak{F}   \rangle$  on the moduli stack $\widetilde{\mathcal M}$.
\end{Th}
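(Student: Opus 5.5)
The plan is to deduce the statement from Theorem~\ref{Th-CC} by specializing to $L=M=\mathcal F$, $e_L=e_M=e$ and $g_1=g_2$, $h_1=h_2$. There are three things to establish: that the isomorphisms are well defined and compatible with morphisms in $\widetilde{\mathcal M}$; that they satisfy the cocycle identity; and that they are compatible with base change.

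For the central extension I take the group ind-scheme $\GG_m\times L\GG_m$ with multiplication
$$(a,h)(b,g)=(ab\,\CC(h,g)^{-1},hg),$$
or with $\CC(h,g)$ instead of $\CC(h,g)^{-1}$, whichever sign convention of \cite[\S~3.1]{O2} matches. Bimultiplicativity of $\CC$ makes this a $2$-cocycle. The action of $(a,g)$ on $\langle\mathcal F,\mathcal F\rangle$ sends the fiber over $q$ to the fiber over $g(q)$ by $a\,T_{\langle\mathcal F,\mathcal F\rangle}(g,g)$. Here we use Remark~\ref{rem-indepen}: the action of $g$ on $q$, and hence $g(\mathcal F)$ and $g(e)$, does not depend on $t$, so $T$ is defined in terms of $(C,p,\mathcal F,e)$ alone.

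The cocycle identity comes directly from Theorem~\ref{Th-CC} with $g_1=g_2=g$ and $h_1=h_2=h$:
$$T_{\langle g(\mathcal F),g(\mathcal F)\rangle}(h,h)\,T_{\langle\mathcal F,\mathcal F\rangle}(g,g)=\CC(h,g)\,T_{\langle\mathcal F,\mathcal F\rangle}(hg,hg).$$
This is exactly the statement that $(a,g)\mapsto a\,T(g,g)$ is multiplicative for the extension defined by $\CC$, with the correct ordering of arguments. We also need $g(q)$ to lie in $\widetilde{\mathcal M}(A)$, which holds by Proposition~\ref{restr_act}. The unit acts trivially because $T(1,1)$ is composed of $\langle\mathcal F:\mathcal F,e\rangle=\mathrm{id}$ and its analogue in the other argument.

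The main remaining point is naturality. We must show that $T_{\langle\mathcal F,\mathcal F\rangle}(g,g)$ commutes with isomorphisms of quintets and with pullback along $A\to A'$, so that the maps glue over the Zariski stack. This follows because every ingredient is canonical and base-change compatible: the isomorphisms $\langle L_1:L_2,e\rangle$ were built compatibly with base change, the Deligne pairing is base-change compatible by Proposition~\ref{prop-Del-pair}, and the action of $\g$ on quintets is too. Functoriality under isomorphisms of quintets holds by transport of structure.
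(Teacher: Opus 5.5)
Your proposal is correct and matches the paper, which simply states that Theorem~\ref{Th-CC} immediately implies the result; this is your specialization to $L=M=\mathcal F$, $e_L=e_M=e$, $g_1=g_2=g$, $h_1=h_2=h$, with naturality and base-change compatibility built into the constructions as you say. The one thing to fix is your sign hedge: with the paper's convention~\eqref{coc-id} the group law is $(a,h)(b,g)=(ab\,\CC(h,g),\,hg)$, and only this option makes $(a,g)\mapsto a\,T_{\langle\mathcal F,\mathcal F\rangle}(g,g)$ multiplicative, since the $\CC(h,g)^{-1}$ variant would force $\CC(h,g)^2=1$.
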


We have the $2$-cocycle on the group ind-scheme ${L\GG_m \times L \GG_m}$ with values in $\GG_m$:
$$
(h_1 \times h_2, \,  g_2 \times g_2)   \longmapsto \CC(h_1, g_2)  \, \mbox{,}
$$
which defines the group ind-scheme $\mathcal H$ that is the central extension of ${L\GG_m \times L \GG_m}$ by~$\GG_m$.

Consider the moduli stack $\widetilde{\mathcal M}^{\langle 2\rangle}$ (in Zariski topology) that associates with every $A$ the groupoid $\widetilde{\mathcal M}^{\langle 2\rangle}(A)$ of data $(C,p, {\mathcal F}_1, {\mathcal F}_2, t,e_1, e_2 )$,
 where $(C,p, {\mathcal F}_i,t,e_i) \in \widetilde{\mathcal M}(A)$ for ${i = 1,2}$. The action of $L \GG_m$ on $\widetilde{\mathcal M}$ gives the natural action of ${L\GG_m \times L \GG_m}$ on $\widetilde{\mathcal M}^{\langle 2\rangle}$.
 On $\widetilde{\mathcal M}^{\langle 2\rangle}$ consider the line bundle $\langle {\mathfrak F}_1, {\mathfrak F}_2  \rangle $, where  $\langle {\mathfrak F}_1, {\mathfrak F}_2  \rangle $ is defined by
$\langle {\mathcal F}_1, {\mathcal F}_2 \rangle$ on $\Spec A$. Theorem~\ref{Th-CC} immediately implies the following theorem.
 \begin{Th} \label{stack-th-2}
The  isomorphisms $g_1 \times g_2 \mapsto T_{\langle {\mathcal F}_1, {\mathcal F}_2   \rangle} (g_1,g_2)$ define the natural action of
 the group ind-scheme $\mathcal H$ on the line bundle $\langle {\mathfrak F}_1, {\mathfrak F}_2  \rangle $
on the moduli stack $\widetilde{\mathcal M}^{\langle 2\rangle}$.
\end{Th}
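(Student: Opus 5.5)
The plan is to deduce Theorem~\ref{stack-th-2} directly from Theorem~\ref{Th-CC}, in the same way Theorem~\ref{stack-th} is deduced. The work splits into three parts: a cocycle identity, naturality over the stack, and compatibility with the group structure on $\mathcal H$.

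\emph{The cocycle identity.} Let $(C,p,{\mathcal F}_1,{\mathcal F}_2,t,e_1,e_2) \in \widetilde{\mathcal M}^{\langle 2\rangle}(A)$ and let $g_1 \times g_2 \in (L\GG_m \times L\GG_m)(A)$. The element $g_1 \times g_2$ sends this object to $(C,p,g_1({\mathcal F}_1), g_2({\mathcal F}_2), t, g_1(e_1), g_2(e_2))$. By Remark~\ref{rem-indepen} and the construction of Section~\ref{quint}, the curve, the point and the parameter do not change. Moreover, $g_1({\mathcal F}_1)$ and $g_2({\mathcal F}_2)$ satisfy condition~\eqref{cond-incl} with respect to ${\mathcal F}_1$ and ${\mathcal F}_2$. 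Hence
$$T_{\langle {\mathcal F}_1,{\mathcal F}_2\rangle}(g_1,g_2) : \langle {\mathcal F}_1,{\mathcal F}_2\rangle \lrto \langle g_1({\mathcal F}_1), g_2({\mathcal F}_2)\rangle$$
is an isomorphism from the fibre of the line bundle at the object to the fibre at its image. Theorem~\ref{Th-CC}, applied with $L={\mathcal F}_1$, $M={\mathcal F}_2$, gives
$$T_{\langle g_1({\mathcal F}_1),g_2({\mathcal F}_2)\rangle}(h_1,h_2)\,T_{\langle {\mathcal F}_1,{\mathcal F}_2\rangle}(g_1,g_2)=\CC(h_1,g_2)\,T_{\langle {\mathcal F}_1,{\mathcal F}_2\rangle}(h_1g_1,h_2g_2).$$
This is exactly the statement that $\lambda \cdot (g_1 \times g_2) \mapsto \lambda\, T(g_1,g_2)$ is a projective action whose defect is the $2$-cocycle $(h_1\times h_2, g_1\times g_2)\mapsto \CC(h_1,g_2)$ defining $\mathcal H$. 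The displayed definition of $\mathcal H$ has the typo $g_2\times g_2$, which should read $g_1\times g_2$.

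\emph{Checking the cocycle condition for $\mathcal H$.} This follows from bimultiplicativity of $\CC$, so the order of the factors in $\mathcal H$ matches the composition order above. Consequently, elements $(\lambda, g_1\times g_2)$ of $\mathcal H(A)$, acting by $\lambda\,T(g_1,g_2)$, give a group action on fibres.

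\emph{Naturality over the stack.} We must show that $T$ is compatible with isomorphisms of objects in $\widetilde{\mathcal M}^{\langle 2\rangle}(A)$ and with base change $A\to A'$. Both follow because every ingredient is functorial:
\begin{itemize}
\item the isomorphisms $\langle L_1:L_2,e\rangle$ and $\langle e, L_1:L_2\rangle$ are canonical and compatible with base change, by construction;
\item the Deligne pairing is compatible with base change and functorial in isomorphisms, by Proposition~\ref{prop-Del-pair};
\item the action of $L\GG_m$ is compatible with base change.
\end{itemize}
Since $\widetilde{\mathcal M}^{\langle 2\rangle}$ is a Zariski stack, these affine-local isomorphisms glue to an isomorphism of line bundles between $\langle{\mathfrak F}_1,{\mathfrak F}_2\rangle$ and its pullback under the action map. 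The identity element acts trivially, because $T(1,1)$ is built from the sections $\langle 1_R, \check e\rangle$ with $R$ empty, which give the identity.

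The main obstacle is making precise that these data constitute an action of a group ind-scheme on a line bundle over a stack. Concretely, one needs compatibility of $T$ with pullback along $A$-points of the ind-scheme $L\GG_m\times L\GG_m$, that is, families of group elements. I would treat this exactly as in Theorem~\ref{stack-th}: everything is defined functorially in $A$ and valued in $A$-points, so no further input beyond Theorem~\ref{Th-CC} is needed.
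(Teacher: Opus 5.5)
Your proposal is correct and follows the paper's route. The paper says only that Theorem~\ref{stack-th-2} is an immediate consequence of Theorem~\ref{Th-CC}, whose identity is exactly the statement that $(a, g_1\times g_2)\mapsto a\,T_{\langle \mathcal F_1,\mathcal F_2\rangle}(g_1,g_2)$ respects the group law of $\mathcal H$ defined by the $2$-cocycle $(h_1\times h_2,\, g_1\times g_2)\mapsto \CC(h_1,g_2)$. Your additional checks are accurate: the cocycle condition from bimultiplicativity of $\CC$, $T(1,1)=\mathrm{id}$, compatibility with base change and isomorphisms, and the correction of the typo $g_2\times g_2$ to $g_1\times g_2$.
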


\section{Quintets, the Deligne pairing and  the group ind-scheme~$\g$}
We construct canonical actions of  central extensions of the group ind-scheme $\g$ by $\GG_m$ on  line bundles on  moduli stacks of certain quintets.
 These line bundles are constructed with the help of Deligne pairings.

\subsection{Deligne pairing and the group ind-scheme $ \AutL$} \label{del-aut}

Let $(C, p, L, t, e_L)$ and $(C, p, M, t,  e_M)$ be quintets from $\widetilde{{\mathcal M}}(A)$.
Let $\vp$ be from  $ \AutL (A)$. We will  construct a canonical isomorphism $ T_{\langle L , M \rangle }(\vp)$ of invertible sheaves  of $\oo_{\Spec A}$-modules,
compatible with base change,
$$
 T_{\langle L , M \rangle }(\vp)  \quad : \quad    \langle L , M \rangle \, \xrightarrow{\sim}  \,  \langle \vp(L) , \vp(M)  \rangle  \, \mbox{,}
$$
where $\vp(L)$ and $\vp(M)$ are invertible sheaves from the quintets $\vp((C, p, L, t, e_L))$ and $\vp ((C, p, M, t,  e_M))$ correspondingly.

Let an integer $k > 0$ such that $\vp (t^k A[[t]])  \subset tA[[t]]$.
By Proposition~\ref{Prop-rel}, for any $s \in \Spec A$ there is an \'{e}tale neighbourhood $U$ of $s$ and relative meromorphic sections $l$ and $m$ of the pullbacks
$\widetilde{L}$ of $L$ and $\widetilde{M}$ of $M$ to $C_U$ correspondingly such that  the images of $l$ and $m$ in $H^0\left(C_U, \widetilde{L}/ \widetilde{L(-kp)} \right)$ and $H^0\left(C_U, \widetilde{M}/ \widetilde{M(-kp)} \right)$ coincide with the images of $e_L$ and $e_m$ correspondingly
and the divisors $(l)$ and $(m)$ are disjoint. Here $\widetilde{L(-kp)}$ and $\widetilde{M(-kp)}$
 are the pullbacks of  $L(-kp)$ and $M(-kp)$ to $C_U$.
Since  it is enough to construct $T_{\langle L , M \rangle }(\vp)$ \'{e}tale locally on $\Spec A$, without loss of generality, by changing $\Spec A$ to its \'{e}tale cover, we suppose that $l$ and $m$
with above properties exist on $C$ over $\Spec A$.

We consider the quintets $q=(C, p, L, t, l)$ and
$\vp(q)$. Since ${\vp(l / e_l) \in A[[t]]^*}$, and ${\vp \cdot (e_l/l) = \vp( e_l/l) \cdot \vp}$ in the group $\g$, the invertible sheaf  in the quintet $\vp(q)$
is isomorphic to the invertible sheaf in the quintet $\vp((C, p, L, t, e_L)) = (\vp(C), \vp(p), \vp(L), \tilde{t}, \tilde{e}_L)$. (We used that the action of an element from $A[[t]]^*$ on a quintet maps  the sheaf from the quintet to the isomorphic sheaf and change the formal trivialization, see~diagram~\eqref{diag-sheaf}.) We define now the relative meromorphic section $\vp(l)$
of $\vp(L)$ by properties that $\vp(l) \mid_{C \setminus p} = l \mid_{C \setminus p} $ and the image of $\vp(l)$  in $\widehat{\vp(L)}_{\vp(C), \vp(p)}$ coincides with
the formal trivialization from the quintet $\vp(q)$  (recall the construction of $\vp(q)$ from Section~\ref{quint}).
We used here  that ${C \setminus p = \vp(C) \setminus \vp(p)}$ and ${L \mid_{C \setminus p} = \vp(L)  \mid_{C \setminus p}}$.
Similarly we define   the relative meromorphic section $\vp(m)$
of $\vp(M)$ such that  $\vp(m) \mid_{C \setminus p} = m \mid_{C \setminus p} $. We define now $T_{\langle L , M \rangle }(\vp) $ as
$$
T_{\left\langle L , M \right\rangle }(\vp)  \quad : \quad \langle l , m \rangle \longmapsto \langle  \vp(l) , \vp(m) \rangle  \, \mbox{.}
$$
This definition does not depend on the choice of corresponding $l$ and $m$, since ${\vp(l) \mid_{C \setminus p} = l \mid_{C \setminus p}}$,
${\vp(m) \mid_{C \setminus p} = m \mid_{C \setminus p} }$, and $l, m, \vp(l), \vp(m)$  give the trivializations of $L$, $M$, $\vp(L)$, $\vp(M)$
in open neighbourhoods of $p$ and $\vp(p)$ on $C$ and $\vp(C)$ correspondingly.

\begin{prop}  \label{aut-comp} Let $(C, p, L, t, e_L)$ and $(C, p, M, t,  e_M)$ be  from $\widetilde{{\mathcal M}}(A)$,  $\vp, \phi$ be  from $ \AutL (A)$, and  $g_1, g_2$ be from $L \GG_m(A)$.  We have properties:
\begin{gather*}
 T_{\langle \phi(L) , \, \phi(M) \rangle }(\vp)  \,  T_{\langle L , \, M \rangle }(\phi) = T_{\langle L , \, M \rangle }(\vp \, \phi) \, \mbox{,}
 \\
  T_{\langle g_1(L) , \, g_2(M) \rangle }(\vp) \,  T_{\langle L , \, M \rangle }(g_1, g_2) =  T_{\langle \vp(L) , \, \vp(M) \rangle }(\vp(g_1), \vp(g_2))    \,   T_{\langle L , \, M \rangle }(\vp)  \, \mbox{.}
\end{gather*}
\end{prop}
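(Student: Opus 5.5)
Both identities are equalities of isomorphisms between invertible sheaves on $\Spec A$, and every map involved is compatible with base change. So it suffices to check them étale locally on $\Spec A$. As in the construction of $T_{\langle L , M \rangle }(\vp)$ and in the proof of Theorem~\ref{Th-CC}, we pass to an étale cover on which all needed relative meromorphic sections exist, using Proposition~\ref{Prop-rel}. On such a cover we fix sections $l\in{\mathcal R}^*_{C/A}(L)$ and $m\in {\mathcal R}^*_{C/A}(M)$ with disjoint divisors, whose images modulo $L(-kp)$ and $M(-kp)$ agree with $e_L$ and $e_M$ for $k$ large. Then we compute every map on the generators $\langle l,m\rangle$. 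The key principle throughout is the one used in the definition: a relative meromorphic section of a modified sheaf is determined by its restriction to $C\setminus p$ together with its image in the formal completion at $p$. The actions of $\g$ change neither $C\setminus p$ nor the sheaves on $C\setminus p$.

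For the first identity, $T_{\langle L , M \rangle }(\phi)$ sends $\langle l,m\rangle$ to $\langle \phi(l),\phi(m)\rangle$. Here $\phi(l)$ agrees with $l$ on $C\setminus p$ and induces the formal trivialization of $\phi(q)$ at $\phi(p)$. Applying $\vp$ to the quintet $\phi(q)=(\phi(C),\phi(p),\phi(L),\tilde t,\phi(l))$, we obtain $\vp(\phi(l))$. It again equals $l$ on $C\setminus p$ and induces the trivialization of $\vp(\phi(q))$. Since $\g$ acts on $\widetilde{\mathcal M}$ (Proposition~\ref{restr_act}), the quintets $\vp(\phi(q))$ and $(\vp\phi)(q)$ are canonically isomorphic. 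By the characterization above, $\vp(\phi(l))$ corresponds to $(\vp\phi)(l)$, and likewise for $m$. Hence both sides send $\langle l,m\rangle$ to the same generator. One point must be checked: $\phi(l)$ is itself an admissible choice for defining $T(\vp)$ on $\langle\phi(L),\phi(M)\rangle$. This requires the integer $k$ to be chosen so that the agreement with the formal trivializations holds to a high enough order for both $\vp$ and $\vp\phi$. The independence of choices noted after the definition then settles it.

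For the second identity, we reduce as in the proof of Theorem~\ref{Th-CC}. Étale locally we may assume that $g_1,g_2\in{\mathcal R}^*_{C/A}(\oo_C)$, with divisors on $C\setminus p$ disjoint from those of $l$ and $m$. By the continuity argument there, replacing $g_i$ by truncations that agree to high $t$-adic order does not change either side. Then $T_{\langle L,M\rangle}(g_1,g_2)$ is tensoring with the sections $\langle 1_{g_1},m\rangle$ and $\langle g_1(l),1_{g_2}\rangle$, where $g_1(l)$ restricts to $g_1^{-1}l$ on $C\setminus p$, in the spirit of \eqref{triv}. Applying $T(\vp)$ leaves everything unchanged on $C\setminus p$. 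Meanwhile $\vp(g_i)$ is the image of the same meromorphic function $g_i$, now expanded in the transformed parameter at $\vp(p)$, since the action of $\vp$ changes the formal identification of ${\mathcal K}_{C,p}$ with $A((t))$ by $\vp$. It follows that $1_{\vp(g_1)}$ corresponds to $1_{g_1}$, and $\vp(g_1)(\vp(l))$ to $\vp(g_1(l))$, under the identifications on $C\setminus p$.

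So both sides send $\langle l,m\rangle$ to the class of the same pair of meromorphic sections, namely $g_1^{-1}l$ and $g_2^{-1}m$ on $C\setminus p$, in $\langle\vp g_1(L),\vp g_2(M)\rangle$. The main obstacle is the bookkeeping of the canonical isomorphisms of quintets $\vp(g(q))\simeq \vp(g)(\vp(q))$. This is the semidirect product relation $(1,\vp)(g,1)=(\vp(g),1)(1,\vp)$, and we need to verify that it respects the chosen meromorphic sections. I would verify it directly from the gluing diagrams \eqref{cartesian} and \eqref{diag-sheaf}, by comparing the two fibered products of $\oo_C(U\setminus p)$-modules.
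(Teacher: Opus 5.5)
Your proposal is correct and is essentially the paper's argument. The first identity is immediate from the definition, since $\vp(\phi(l))$ and $(\vp\phi)(l)$ both restrict to $l$ on $C\setminus p$; your point about choosing $k$ large enough for $\phi$, $\vp$ and $\vp\phi$ simultaneously is the right thing to check. The second identity amounts to $T(\vp)$ commuting with the two elementary isomorphisms $\langle g_1(L):L, e_M\rangle$ and $\langle g_1(e_L), g_2(M):M\rangle$ that make up $T_{\langle L,M\rangle}(g_1,g_2)$, which the paper records as the commutativity of two squares.

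Two small remarks. First, checking these squares one at a time, using admissible sections of $g_1(L)$ supplied by Proposition~\ref{Prop-rel}, makes your continuity reduction to meromorphic $g_i$ unnecessary. Second, your final sentence drops a scalar: $T_{\langle L,M\rangle}(g_1,g_2)\langle l,m\rangle$ equals $g_1((m))\, g_2((g_1(l)))\,\langle g_1(l),g_2(m)\rangle$, not the bare class of the pair. This is harmless, because the same scalar, computed on $C\setminus p$, appears on both sides.
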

\begin{proof}
The first formula follows directly from the definition. The second formula follows from the commutativity of the diagrams:
\begin{equation}   \label{diagr2}
 {\xymatrix{
   {\langle L, M  \rangle } \ar[d]_{T_{\langle L , \, M   \rangle} (\vp)}  \ar[rrrrr]^{\langle g_1(L) : L   , \, e_M  \rangle} &&&&&  {\langle g_1(L), M \rangle}  \ar[d]^{T_{\langle g_1(L) , \, M   \rangle} (\vp)}  \\
   {\langle  \vp(L), \vp(M)   \rangle} \ar[rrrrr]_{\left\langle \vp(g_1)(\vp(L)) : \vp(L)   , \, \vp(e_M )  \right\rangle}   &&&&& {\langle \vp(g_1)(\vp(L)) ,  \vp(M)  \rangle}
 }}
 \end{equation}
and similar (symmetric, see item~\ref{iii-3} of Proposition~\ref{prop-Del-pair}) diagram with ${\langle g_1(e_{L}) , \, g_2(M) : M  \rangle}$ instead of ${\langle g_1(L) : L   , \, e_M  \rangle}$ and so on for other maps.

\end{proof}

\subsection{Deligne pairing and the group ind-scheme $\g$}  \label{last}

Let $(C, p, t)$ be a part of a quintet from ${\mathcal M}(A)$. For example, $\left(C, p, t, \oo_C, e_{\oo_C} \right)$ is from ${\mathcal M}(A)$,
where
$e_{\oo_C}$ comes from $1 \in H^0(C, \oo_C)$.
Let $\Omega_{C/A}$ be the sheaf of relative differential $1$-forms on $C$ over $\Spec A$. The sheaf $\Omega_{C/A}$ is a quasi-coherent sheaf of  $\oo_C$-modules, compatible with any base change and
 invertible on the smooth locus of the morphism $C \to \Spec A$,  see~\cite[Section~01UM, Lemma 02G1]{Stacks}. Therefore $ \left(C, p, t, \Omega_{C/A}, dt \right)$ is from ${\mathcal M}(A)$.

\quash{
Let $\pi : C \to \Spec A$ be a morphism with properties required for  the morphism from a quintet from $\widetilde{{\mathcal M}}(A)$. Since every fiber of $\pi$ is Cohen-Macaulay, there is the relative dualizing sheaf $\omega_{C/A}$ of $\pi$ on $C$, which is a pseudo-coherent sheaf of $\oo_C$-modules and  it is flat over $ A$, and  compatible with any base change, see~\cite[Lemma~0E6P]{Stacks}. The sheaf  $\omega_{C/A}$ is defined up to a  canonical isomorphism. For every $\pi$ as above we fix the sheaf $\omega_{C/A}$.
When the morphism $\pi$ is locally projective,
  the sheaf $\omega_{C/A}$ is canonically  isomorphic to the sheaf $\Omega^1_{C/A}$ on the smooth locus of $\pi$, see~\cite[Prop.~(22)]{K}. From the proof of~\cite[Prop.~(22)]{K} it follows that this isomorphism is compatible with base change by any open subset.
When every fiber of $\pi$ is Gorenstein,  the sheaf $\omega_{C/A}$ is invertible, see~\cite[Lemma~0E6R]{Stacks}.}

{\em We will say that a quintet} $(C, p, \ff, t, e) \in \widetilde{{\mathcal M}}(A)$ belongs to $\widetilde{{\mathcal M}}_{\rm sm}(A)$ if
the morphism $C \to \Spec A$ is smooth. We have the moduli stack (in Zariski topology) $\widetilde{{\mathcal M}}_{\rm sm}$ that associates with every  $A$ the groupoid  $\widetilde{{\mathcal M}}_{\rm sm}(A)$.
Since  the action of $\g(A)$ does not change $C \setminus p$,  the natural action of $\g$ on $\widetilde{{\mathcal M}}$
gives also the action of $\g$ on $\widetilde{{\mathcal M}}_{\rm sm}$.

Suppose that $\left(C, p, \ff, t, e \right)$ belongs to   $\widetilde{{\mathcal M}}_{\rm sm}(A)$.
Then $\left(C, p, \Omega_{C/A}, t, dt \right)$ belongs to $\widetilde{{\mathcal M}}_{\rm sm}(A)$. And  quintets of the last kind can be glued to objects over non-affine base schemes in $\widetilde{{\mathcal M}}_{\rm sm}$.

On the moduli stack $\widetilde{{\mathcal M}}_{\rm sm}$ we have the line bundles $\langle  \mathfrak{F}, \boldsymbol{\Omega} \rangle$
and $\langle \bOmega, \bOmega \rangle$ which are defined  by association with every quintet $(C, p, \ff, t, e)  \in \widetilde{{\mathcal M}}_{\rm sm}(A)$  the invertible sheaves
$\langle \ff, \Omega_{C/A} \rangle$  or $\langle \Omega_{C/A} , \Omega_{C/A} \rangle$ on $\Spec A$ correspondingly.

Analogously, we have the line bundle $\langle \bOmega, \bOmega\rangle$ on the moduli stack (in Zariski topology) $\widetilde{{\mathcal M}}_{\rm curve}$,
where $\widetilde{{\mathcal M}}_{\rm curve}$
  associates with every $A$ the groupoid  $\widetilde{{\mathcal M}}_{\rm curve}(A)$ of data
$ (C, p, \oo_C, t, e_{\oo_C})$ such that
$(C, p, \oo_C, t, e_{\oo_C})$
belongs to $\widetilde{{\mathcal M}}_{\rm sm}(A)$.

Besides, before Theorem~\ref{stack-th} we defined the line bundle $\langle  \mathfrak{F}, \mathfrak{F} \rangle$
on the moduli stack~$\widetilde{{\mathcal M}}$.

\medskip

Through the natural morphism of group ind-schemes  $\G \to \AutL  $ we obtain that the group ind-scheme $\G$ naturally acts on the commutative group ind-scheme $L \GG_m$.

Let $\lambda_1$ and $\lambda_2$ be any $1$-cocycles on $\G$ with coefficients in $L \GG_m$. This means that $\lambda_i$ is a morphism of group functors $\G \to L \GG_m$  with the property
$$\lambda_i (g_1, g_2) = \lambda_i(g_1) g_1(\lambda_i(g_2))$$
 for any $g_1, g_2$ from $\g(A)$ and any $i$, see more on cohomology of group functors in~\cite[\S~3]{O2}.
Consider the $2$-cocycle $\lambda_1 \cup \lambda_2$, which is the morphism of group functors  ${\g \times \g   \to L \GG_m \otimes L \GG_m}$ (where
$\left( L \GG_m \otimes L \GG_m \right) (A) = L \GG_m(A)\otimes_{\dz} L \GG_m (A)$) defined by the rule $$(\lambda_1 \cup \lambda_2) (g_1, g_2) = \lambda_1(g_1)  \otimes g_1 (\lambda_2(g_2)) \, \mbox{.} $$
 We construct {\em the $2$-cocycle}
$\langle \lambda_1 , \lambda_2   \rangle$ on $\G$ with coefficients in  $\GG_m$, where $\G$ acts trivially on $\GG_m$,
$$
\langle \lambda_1 , \lambda_2   \rangle  = \CC \circ (\lambda_1 \cup \lambda_2)  \, \mbox{,}
$$
where $\circ$ means the composition of the morphism $\lambda_1  \cup  \lambda_2$ from $\G \times \G$  to $L \GG_m \otimes L \GG_m  $ and
 the $\G$-equivariant morphism  $\CC$ (the Contou-Carr\`{e}re symbol) from $ L \GG_m \otimes  L \GG_m $ to  $\GG_m $.

 The $2$-cocycle $\langle \lambda_1 , \lambda_2   \rangle$ naturally defines the group ind-scheme that is a central extension of $\g$ by $\GG_m$, see~\cite[Prop.~2.2]{O1}.
Explicitly, this central extension as the ind-scheme (not as the  group ind-scheme) is $\GG_m \times \g $, and the group law is given as
\begin{equation}  \label{coc-id}
(a_1, g_1)(a_2, g_2) = (a_1 a_2 \, \langle \lambda_1 , \lambda_2   \rangle (g_1, g_2), \, g_1 g_2) \, \mbox{,}
\end{equation}
where $a_i \in \GG_m(A)$, $g_i \in \g(A)$ for any $i$.

 There are distinct $1$-cocycles $\Lambda$ and $\Omega$ on $\G$ with coefficients in $L \GG_m$:
$$
\Lambda((h, \vp)) = h    \qquad \mbox{and}  \qquad \Omega( (h, \vp) )  = \frac{ d \vp(t)}{  dt} = \vp(t)'  \, \mbox{,}
$$
where $(h, \vp)  \in \G(A) = L \GG_m (A)  \rtimes \AutL (A)$.

Thus we obtain distinct $2$-cocycles on $\G$ with coefficients in $\GG_m$:
$$
\langle \Lambda , \Lambda   \rangle   \, \mbox{,} \qquad \quad \langle \Lambda , \Omega   \rangle
 \, \mbox{,} \qquad \quad  \langle \Omega , \Omega   \rangle  \, \mbox{.}
$$

\begin{Th} \label{last-th}
Consider a group ind-scheme which is a central extension of $\g$ by $\GG_m$ given  by a $2$-cocycle which is  either $\left\langle \Lambda , \Lambda   \right\rangle$, or $\langle \Lambda , \Omega   \rangle$,  or $\langle \Omega , \Omega   \rangle$. Let $a \in \GG_m(A)$, $h \in L \GG_m(A)$, $\vp \in \AutL (A)$.
\begin{enumerate}
\item The first group ind-scheme naturally acts on the line bundle $\langle  \mathfrak{F}, \mathfrak{F} \rangle$
on the moduli stack $\widetilde{{\mathcal M}}$ by isomorphisms $(a,(h, \vp))  \mapsto a \, T_{\langle \vp({\mathcal F}), \, \vp({\mathcal F}) \rangle }(h,h) \,  T_{\langle {\mathcal F}, \, {\mathcal F} \rangle }(\vp)  $.
\item The second group ind-scheme naturally acts on the line bundle $\langle  \mathfrak{F}, \bOmega \rangle$
on the moduli stack $\widetilde{{\mathcal M}}_{\rm sm}$ by isomorphisms $(a,(h, \vp))  \mapsto a \, T_{\langle \vp({\mathcal F}), \, \vp({\Omega_{C/A}}) \rangle }\left(h, \vp(t)' \right) \,  T_{\langle {\mathcal F}, \,{\Omega_{C/A}} \rangle}(\vp)  $.
\item \label{iitt-3} The third group ind-scheme naturally acts on the line bundle $\langle  \bOmega , \bOmega \rangle$
on the moduli stack $\widetilde{{\mathcal M}}_{\rm sm}$ by isomorphisms
$$(a,(h, \vp))  \longmapsto a \, T_{\langle \vp({\Omega_{C/A}}), \, \vp({\Omega_{C/A}}) \rangle }\left( \vp(t)'  , \vp(t)' \right) \,  T_{\langle {\Omega_{C/A}}, \,{\Omega_{C/A}} \rangle }(\vp)  \, \mbox{.}$$

Consider a group ind-scheme which is the pullback of the third central extension along the embedding  $\AutL \to \g$. This group ind-scheme  naturally acts on the line bundle $\langle  \bOmega , \bOmega \rangle$
on the moduli stack $\widetilde{{\mathcal M}}_{\rm curve}$ by isomorphisms
$$(a, \vp)  \longmapsto a \, T_{\langle \vp({\Omega_{C/A}}), \, \vp({\Omega_{C/A}}) \rangle }\left( \vp(t)'  , \vp(t)' \right) \,  T_{\langle {\Omega_{C/A}}, \,{\Omega_{C/A}} \rangle }(\vp)  \, \mbox{.}$$
\end{enumerate}

\end{Th}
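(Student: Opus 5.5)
The plan is to check directly that the assigned isomorphisms multiply according to the group law~\eqref{coc-id}. All three items have the same shape. Let $\lambda_1,\lambda_2$ be $1$-cocycles with $(\lambda_1,\lambda_2)=(\Lambda,\Lambda)$, $(\Lambda,\Omega)$ or $(\Omega,\Omega)$. Let $(L,M)=(\ff,\ff)$, $(\ff,\Omega_{C/A})$ or $(\Omega_{C/A},\Omega_{C/A})$ be the corresponding pair of sheaves in the quintet. For $g=(h,\vp)$ put
$$
S(g)=T_{\langle \vp(L),\vp(M)\rangle}\bigl(\lambda_1(g),\lambda_2(g)\bigr)\,T_{\langle L,M\rangle}(\vp) \, \mbox{.}
$$
We must show that $S(g_1g_2)\,\langle\lambda_1,\lambda_2\rangle(g_1,g_2)=S(g_1)S(g_2)$ as maps from $\langle L,M\rangle$ to $\langle (g_1g_2)(L),(g_1g_2)(M)\rangle$. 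Compatibility with base change is immediate from the compatibility of all $T$'s, so this identity is the whole content.

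The first preliminary step is to identify the quintet objects. For items 2 and 3 we need $\vp(\Omega_{C/A})$ to be the $\Omega$-sheaf of $\vp(C)$ with trivialization $d\vp(t)$ expressed in terms of $dt'$. Concretely, we need
$$
(h,\vp)\bigl(C,p,\Omega_{C/A},t,dt\bigr)\simeq \vp(t)'\bigl(\vp(C),\ldots,\Omega_{\vp(C)/A},t',dt'\bigr) \, \mbox{.}
$$
This holds because $\vp$ rewrites the formal coordinate as $\vp(t)$, so the transported trivialization is $dt$ written as $\vp(t)'\,dt'$. The $h$-part acts only on $\ff$. Hence the $L\GG_m$-component acting on each sheaf is exactly $\lambda_i(g)$. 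One has to verify this carefully from diagrams~\eqref{cartesian}--\eqref{diag-sheaf}, and I expect this bookkeeping (including $\Omega(g_1g_2)=\Omega(g_1)\,\vp_1(\Omega(g_2))$, i.e.\ the chain rule) to be the main obstacle. Once it is done, both $\Lambda$ and $\Omega$ satisfy the cocycle rule $\lambda(g_1g_2)=\lambda(g_1)\,\vp_1(\lambda(g_2))$, which is needed below.

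The computation then proceeds in three steps. Write $g_i=(h_i,\vp_i)$. First, expand $S(g_1)S(g_2)$:
$$
T_{\langle \vp_1(L'),\vp_1(M')\rangle}(\lambda_1(g_1),\lambda_2(g_1))\,T_{\langle L',M'\rangle}(\vp_1)\,T_{\langle \vp_2(L),\vp_2(M)\rangle}(\lambda_1(g_2),\lambda_2(g_2))\,T_{\langle L,M\rangle}(\vp_2) \, \mbox{,}
$$
where $L',M'$ denote the sheaves of $g_2(q)$. Second, use the second formula of Proposition~\ref{aut-comp} to move $T(\vp_1)$ past $T(\lambda_1(g_2),\lambda_2(g_2))$; this replaces the latter by $T(\vp_1(\lambda_1(g_2)),\vp_1(\lambda_2(g_2)))$. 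Third, apply the first formula of Proposition~\ref{aut-comp} to get $T(\vp_1)T(\vp_2)=T(\vp_1\vp_2)$.

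We are left with two consecutive $L\GG_m$-type maps. Theorem~\ref{Th-CC} combines them into
$$
\CC\bigl(\lambda_1(g_1),\vp_1(\lambda_2(g_2))\bigr)\,T\bigl(\lambda_1(g_1)\vp_1(\lambda_1(g_2)),\,\lambda_2(g_1)\vp_1(\lambda_2(g_2))\bigr) \, \mbox{.}
$$
By the cocycle property the arguments of $T$ are $\lambda_1(g_1g_2)$ and $\lambda_2(g_1g_2)$, and the scalar factor is exactly $\CC\circ(\lambda_1\cup\lambda_2)(g_1,g_2)=\langle\lambda_1,\lambda_2\rangle(g_1,g_2)$. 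Together with the scalar $a_1a_2$ this gives precisely the law~\eqref{coc-id}, so the assignment is an action.

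It remains to check that the assignment is compatible with isomorphisms of quintets, so that it defines an action on the line bundle over the stack. This follows from the functoriality and base-change compatibility of $T$ and from the Zariski-local nature of all constructions. For the last statement of item~\ref{iitt-3}, we restrict to $h=1$. The stack $\widetilde{\mathcal M}_{\rm curve}$ is preserved by $\AutL$, since $\vp(\oo_C)$ with $\vp(e_{\oo_C})$ is again of that form. The same computation with $g_i=(1,\vp_i)$ then gives an action of the pulled-back extension. Item 1 needs only $\widetilde{\mathcal M}$, because $\Lambda$ never involves $\Omega_{C/A}$; there smoothness is not used.
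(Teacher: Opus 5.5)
Your route is the paper's: identify how the $\vp$-transported differential quintet relates to the differential quintet of $\vp(C)$, then combine the two formulas of Proposition~\ref{aut-comp} with Theorem~\ref{Th-CC}. Your expansion of $S(g_1)S(g_2)$, the commutation of $T(\vp_1)$ past $T(\lambda_1(g_2),\lambda_2(g_2))$, and the matching of the scalar $\CC(\lambda_1(g_1),\vp_1(\lambda_2(g_2)))$ with $\langle\lambda_1,\lambda_2\rangle(g_1,g_2)$ via the cocycle rules (including the chain rule for $\Omega$) are all correct. This is exactly the computation the paper compresses into ``follow from Proposition~\ref{aut-comp} and Theorem~\ref{Th-CC}''.

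The one non-formal step, however, is written in the wrong direction. By diagram~\eqref{diag-sheaf} (compare~\eqref{triv}), an element $g\in L\GG_m(A)$ replaces the completed sheaf $\hat{\ff}$ by $g^{-1}\hat{\ff}$ and the trivialization $e$ by $g^{-1}e$. So your correct observation that the transported trivialization is $dt=\vp(t)'\,dt'$ yields
$$
\vp(t)'\bigl(\vp(C,p,\Omega_{C/A},t,dt)\bigr)\simeq\bigl(\vp(C),\vp(p),t',\Omega_{\vp(C)/A},dt'\bigr),
$$
which is the isomorphism the paper establishes. It does not yield $\vp(C,p,\Omega_{C/A},t,dt)\simeq\vp(t)'(\ldots,\Omega_{\vp(C)/A},t',dt')$, as in your display. (The sentence about $v=\tilde{e}/e$ in Section~\ref{quint} appears to use the opposite normalization, which may be the source of the slip. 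The construction itself and the proof of Theorem~\ref{Th-CC} use $e\mapsto g^{-1}e$.)

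This is not cosmetic. Read literally, your display says that the $L\GG_m$-element carrying $\vp(\Omega_{C/A})$ to $\Omega_{\vp(C)/A}$ is $(\vp(t)')^{-1}$. That contradicts your very next sentence, and it would make $T_{\langle\vp(\ff),\vp(\Omega_{C/A})\rangle}(h,\vp(t)')$ land outside the fiber of $\langle\mathfrak{F},\bOmega\rangle$ at $(h,\vp)(q)$. Once the display is corrected, $\lambda_2(g)$ applied to $\vp(M)$ is indeed the second sheaf of $g(q)$ with the matching trivialization. All sources and targets in your chain of $T$'s then agree, and the argument goes through as in the paper.
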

\begin{proof}
Let $q = \left(C, p, t, \Omega_{C/A}, dt \right)$ be from $\widetilde{{\mathcal M}}_{\rm sm}(A)$.
For any $\vp$   from $ \AutL (A)$ we have the quintet $  \vp(q) = \left(\vp(C), \vp(p), \tilde{t}, \vp(\Omega_{C/A}), \widetilde{dt} \right)$  from
$\widetilde{{\mathcal M}}_{\rm sm}(A)$.
From diagrams~\eqref{cartesian}  and~\eqref{diag-sheaf}  and since the map $A((t))dt \to A((\vp(t))) d \vp(t)$ is the composition of
$A((t))dt \to A((\vp(t)))dt$ and $A((\vp(t)))dt \to  A((\vp(t))) d(\vp(t)) $, we have  a canonical isomorphism between quintets
 $$
   \vp(t)'   \left( \vp(q)  \right)
 \,
 \simeq  \,
 \left(\vp(C), \vp(p), \tilde{t}, \Omega_{\vp(C)/A} , d \phantom{} \tilde{t} \right) \, \mbox{,}$$
  compatible with base change.

Now the statemenets of the theorem, i.e. the calculations of the corresponding {$2$-cocycles}  on $\g$ (or on $\AutL$) via the actions on the corresponding line bundles on moduli stacks (see also~\eqref{coc-id}), follow from Proposition~\ref{aut-comp} and Theorem~\ref{Th-CC}.
\end{proof}

\quash{
(More accurate,  in~\cite[Prop.~(22)]{K} the morphism $\pi$ should be locally projective. But $\pi$ is \'{e}tale locally projective, because of existence of  ample invertible sheaves, see the proof of item~\ref{ii-last} of Theorem~\ref{Weil-gen}  and~\cite[Chapter~5, Theorem~1.34]{Liu}. Now we use the \'{e}tale descent.)
}

\vspace{0.3cm}

\noindent Steklov Mathematical Institute of Russsian Academy of Sciences, 8 Gubkina St., Moscow 119991, Russia,
 {\em and}

 \noindent National Research University Higher School of Economics, Laboratory of Mirror Symmetry,  6 Usacheva str., Moscow 119048, Russia,
{\em and}

\noindent National University of Science and Technology ``MISiS'',  Leninsky Prospekt 4, Moscow  119049, Russia

\noindent {\it E-mail:}  ${d}_{-} osipov@mi{-}ras.ru$

\end{document}